\newcommand{\cslonly}[1]{}
\newcommand{\arxivonly}[1]{#1}
\newcommand{\cslorarxiv}[2]{#2}
\newcommand{\journalonly}[1]{}
  \definecolor{darkgreen}{rgb}{0,0.45,0}
  \definecolor{darkred}{rgb}{0.75,0,0}
  \definecolor{darkblue}{rgb}{0,0,0.6}
  \theoremstyle{plain}
  \newtheorem{theorem}{Theorem}[subsection]
  \newtheorem{lemma}[theorem]{Lemma}
  \newtheorem{corollary}[theorem]{Corollary}
  \theoremstyle{definition}
  \newtheorem{definition}[theorem]{Definition}
  \newtheorem{example}[theorem]{Example}
  \theoremstyle{remark}
  \newtheorem*{remark}{Remark}
\theoremstyle{plain}
\newtheorem*{theorem*}{Theorem}
\DeclareMathOperator{\dif}{d \!}
\newcommand{\sm}[1]{\Sigma{(#1).}}
\newcommand{\dpt}[1]{\Pi{(#1).}}
\newcommand{\ex}[1]{\exists{(#1).}}
\newcommand{\fa}[1]{\forall{(#1).}}
\newcommand{\isProp}{\operatorname{isHProp}}
\newcommand{\gL}{\operatorname{locatesRight}}
\newcommand{\gU}{\operatorname{locatesLeft}}
\newcommand{\proj}[1]{{\operatorname{pr}_{#1}}}
\newcommand{\fst}{\proj1}
\newcommand{\Biimp}{\Leftrightarrow}
\newcommand{\Implies}{\Rightarrow}
\newcommand{\Prop}{{\operatorname{HProp}}}
\newcommand{\DProp}{{\operatorname{DHProp}}}
\newcommand{\set}[1]{\left\{ \, #1 \,\right\}}
\newcommand{\trunc}[1]{\left\| #1 \right\|}
\newcommand{\truncm}[1]{\left| #1 \right|}
\newcommand{\abs}[1]{\left| #1 \right|}
\newcommand{\floor}[1]{\left\lfloor #1 \right\rfloor}
\newcommand{\minim}{\operatorname{minimal}}
\newcommand{\UU}{\mathcal{U}}
\newcommand{\Empty}{\mathbf{0}}
\newcommand{\unit}{\mathbf{1}}
\newcommand{\bool}{\mathbf{2}}
\newcommand{\true}{\mathsf{t\!t}} 
\newcommand{\false}{\mathsf{f\!f}} 
\newcommand{\inl}{{\operatorname{inl}}}
\newcommand{\inr}{{\operatorname{inr}}}
\newcommand{\N}{\mathbb{N}}
\newcommand{\Z}{\mathbb{Z}}
\newcommand{\Q}{\mathbb{Q}}
\newcommand{\R}{\mathbb{R}}
\newcommand{\RC}{{\R_\mathbf{C}}}
\newcommand{\RD}{{\R_\mathbf{D}}}
\newcommand{\SL}{\operatorname{locator}}
\newcommand{\SLR}{\mathfrak{L}}
\newcommand{\RDL}{{\R_\mathbf{D}^\SLR}}
\newcommand{\apart}{\mathrel{\#}}
\newcommand{\eqv}{\simeq}
\newcommand{\Iff}{\Leftrightarrow}
\newcommand{\pow}{\mathcal{P}}
\newcommand{\isCut}{\operatorname{isDedekindCut}}
\newcommand{\boundedLower}{\operatorname{boundedLower}}
\newcommand{\boundedUpper}{\operatorname{boundedUpper}}
\newcommand{\closedLower}{\operatorname{closedLower}}
\newcommand{\closedUpper}{\operatorname{closedUpper}}
\newcommand{\openLower}{\operatorname{openLower}}
\newcommand{\openUpper}{\operatorname{openUpper}}
\newcommand{\transitive}{\operatorname{transitive}}
\newcommand{\located}{\operatorname{located}}
\newcommand{\cutStruct}{\isCut^\S}
\newcommand{\bounderLower}{\boundedLower^\S}
\newcommand{\bounderUpper}{\boundedUpper^\S}
\newcommand{\closerLower}{\closedLower^\S}
\newcommand{\closerUpper}{\closedUpper^\S}
\newcommand{\openerLower}{\openLower^\S}
\newcommand{\openerUpper}{\openUpper^\S}
\newcommand{\transitor}{\transitive^\S}
\newcommand{\locator}{\located^\S}
\newcommand{\PEM}{\mathrm{PEM}} 
\newcommand{\WLPO}{\mathrm{WLPO}} 
\title{Extensional constructive real analysis\\ via locators}
\author{Auke B. Booij}{School of Computer Science, Universiy of
  Birmingham\\{Birmingham, UK}}{}{}{}
\authorrunning{A.B. Booij}
\subjclass{\ccsdesc[300]{Theory of computation~Constructive mathematics}}
\keywords{constructive mathematics, constructive analysis, exact real
  arithmetic, univalent type theory, homotopy type theory}
  \author{Auke B. Booij}
  \affil{School of Computer Science, Universiy of
  Birmingham\\{Birmingham, UK}}
\begin{document}

\maketitle

\begin{abstract}
  Real numbers do not admit an extensional procedure for observing
  discrete information, such as the first digit of its decimal
  expansion, because every extensional, computable map from the reals
  to the integers is constant, as is well known.  We overcome this by
  considering real numbers equipped with additional structure, which
  we call a locator.  With this structure, it is possible, for
  instance, to construct a signed-digit representation or a Cauchy
  sequence, and conversely these intensional representations give rise
  to a locator.  Although the constructions are reminiscent of
  computable analysis, instead of working with a notion of
  computability, we simply work constructively to extract observable
  information, and instead of working with representations, we
  consider a certain locatedness structure on real numbers.
\end{abstract}

\section{Introduction}
\label{sec:intro}

It is well known how to compute with real numbers intensionally, with
equality of real numbers specified by an imposed equivalence relation
on
representations~\cite{bishop:constructive,lcf:analysis,oconnor:formalizing},
such as Cauchy sequences or streams of digits.  It has to be checked
explicitly that functions on the representations preserve such
equivalence relations.  Discrete observations, such as finite decimal
approximations, can be made because representations are given, but a
different representation of the same real number can result in a
different observation, and hence discrete observations are necessarily
non-extensional.

In univalent mathematics, equality of real numbers can be captured by
identity types directly, rather than by an imposed equivalence
relation, thus avoiding the use of setoids.  Preservation of equality
of real numbers is automatic, but the drawback is that we are
prevented from making any discrete observations of arbitrary real
numbers.
This kind of problem is already identified by
Hofmann~\cite[Section~5.1.7.1]{hofmann:extensional} for an extensional
type theory.  Discrete observations of real numbers are made by
breaking extensionality using a \emph{choice operator}, which does not
give rise to a \emph{function}.

\medskip


To avoid breaking extensionality, the central idea of this paper is to
restrict our attention to real numbers that can be equipped with a
simple structure called a \emph{locator}.
Such a locator is a strengthening of the locatedness property of
Dedekind cuts.  While the locatedness of a real number $x$ says that
for rational numbers $q<r$ we have the property $q<x$ or $x<r$, a
locator produces a specific selection of one of $q<x$ and $x<r$.  In
particular, the same real number can have different locators, and it
is in this sense that locators are structure rather than property.

In a constructive setting such as ours, not all real numbers have
locators, and we prove that the ones that do are the ones that have
Cauchy representations in Section~\ref{sec:sdr}.  However, working
with locators rather than Cauchy representations gives a development
which is closer to that of traditional real analysis.  For example, we
can prove that if $x$ has a locator, then so does $e^x$, and this
allows to compute $e^x$ when working constructively, so that we say
that the exponential function \emph{lifts to locators}.  As another
example, if $f$ is given a modulus of continuity and lifts to locators,
then $\int_0^1f(x)\dif x$ has a locator and we can compute the
integral in this way.

Thus the difference between locatedness and locators is that one is
property and the other is structure.
Plain Martin-L\"of type theory is not enough to capture this
distinction because, for example, it allows to define the notion of
locator as structure but not the notion of locatedness as property,
and therefore it does not allow to define the type of Dedekind reals
we have in mind, whose identity type should capture directly the
intended notion of equality of real numbers.
A good foundational system to account for such distinctions is
univalent type theory (UTT), also known as homotopy type
theory~\cite{hottbook}.  For us, it is enough to work in the fragment
consisting of Martin-L\"of type theory with propositional truncation,
propositional extensionality and function extensionality (see
Section~\ref{sec:prelim}).  The need for univalence would arise only
when considering types of sets with structure such as the type of
metric spaces or the type of Banach spaces for the purposes of
functional analysis.

We believe that our constructions can also be carried out in other
constructive foundations such as CZF, the internal language of an
elementary topos with a natural numbers object, or Heyting arithmetic
of finite types.  Our choice of UTT is to some extent a practical one,
as it is a constructive system with sufficient extensionality, which
admits, at least in theory, applications in proof assistants allowing
for computation using the techniques in this paper.

\medskip

In summary, the work has two aspects.  One aspect is that instead of
working with functions on intensional representations, we work with
functions on real numbers that lift representations.  The second
aspect is the particular representation that seems suitable.

We describe the assumptions on the foundational system in
Section~\ref{sec:prelim}.

The definition and basic theory of locators is given in
Section~\ref{sec:locators}.  We construct locators for rationals in
Section~\ref{sec:rationals}.  We discuss preliminaries for observing
data from locators in Sections~\ref{sec:logic-locators}
and~\ref{sec:bounded-search}, which is then used to compute rational
bounds in Section~\ref{sec:computing-bounds}.  We compute locators for
algebraic operations in Sections~\ref{sec:algebraic-operations} and
for limits in Section~\ref{sec:locators:limits}.  We compute signed
digit representations for reals with locators in
Section~\ref{sec:sdr}.  Given a real and a locator, we strengthen the
properties for being a Dedekind cut into structure in
Section~\ref{sec:cuts-as-structure}.

We show some ways of using locators in constructive analysis in
Section~\ref{sec:analysis}.  We compute locators for
integrals in Section~\ref{sec:integrals}.  We discuss how locators can
help computing roots of functions in Section~\ref{sec:ivt}.

\section{Preliminaries}
\label{sec:prelim}

We work in type theory with universes $\UU$ and $\UU'$ with
$\UU:\UU'$, identity types $x=_X y$ for $x,y:X$, a unit type $\unit$,
an empty type $\Empty$, a natural numbers type $\N$, dependent sum
types $\Sigma$, dependent product types $\Pi$ and propositional
truncation $\trunc{\,\cdot\,}$ (see Section~\ref{sec:props}).  We assume
function extensionality, which can be stated as the claim that all
pointwise equal functions are equal.  We assume propositional
extensionality, namely the claim that if $P$ and $Q$ are propositions
in the sense of Section~\ref{sec:props}, and $P\Implies Q$ and
$Q\Implies P$, then $P=Q$.

\subsection{Propositions}
\label{sec:props}

\begin{definition}\label{def:prop}
  A \emph{proposition} is a type $P$ all whose elements are equal, which
  is expressed type-theoretically as
  \[
    \isProp(P)\coloneqq\dpt{p,q:P}(p=_P q).
  \]
  We have the type
  $\Prop\coloneqq\sm{P:\UU}\isProp(P)$ of all propositions, and we
  conflate elements of $\Prop$ with their underlying type, that is,
  their first projection.
\end{definition}

We assume that every type has a propositional truncation.

\begin{definition}\label{def:truncation}
  The \emph{propositional truncation} $\trunc{X}$ of a type $X$ is a
  proposition together with a \emph{truncation map}
  $\truncm{\,\cdot\,}:X\to \trunc{X}$ such that for any other
  proposition $Q$, given a map $g:X\to Q$, we obtain a map
  $h:\trunc{X}\to Q$.
\end{definition}
\begin{remark}
  The uniqueness of the obtained map $\trunc{X}\to Q$ follows from the
  fact that $Q$ is a proposition, and function extensionality.
\end{remark}
We may also think of propositional truncations categorically, in which
case they have the universal property that given a map $X\to Q$ as in
the diagram below, we obtain the vertical map, which automatically
makes the diagram commute because $Q$ is a proposition, and which is
automatically equal to any other map that fits in the diagram.
\begin{center}
  \begin{tikzcd}
    X \arrow[r,"\truncm{\,\cdot\,}"] \arrow[rd] & \trunc{X}
    \arrow[d,dashrightarrow]
    \\
    & Q
  \end{tikzcd}
\end{center}

Propositional truncations can be defined as higher-inductive types, or
constructed via impredicative encodings assuming propositional
resizing.

Even though the elimination rule in Definition~\ref{def:truncation}
only constructs maps into propositions, we can \emph{sometimes} get a map
$\trunc{X}\to X$, as we discuss in Theorem~\ref{thm:nat-dec}.

\begin{definition}\label{def:props:interp}
  Truncated logic is defined by the following, where $P,Q:\Prop$
  and~$R:X\to\Prop$~\cite[Definition~3.7.1]{hottbook}:
  \begin{align*}
    \top            & \coloneqq  \unit \\
    \bot            & \coloneqq  \Empty \displaybreak[1] \\
    P \land Q       & \coloneqq  P \times Q \\
    P \Implies Q & \coloneqq  P \to Q \\
    P \Biimp Q & \coloneqq  P=Q \\
    \neg P          & \coloneqq  P \to \Empty \\
    P \lor Q        & \coloneqq  \trunc{P + Q} \displaybreak[1] \\
    \fa{x : X} R(x) & \coloneqq  \dpt{x : X} R(x) \\
    \ex{x : X} R(x) & \coloneqq \trunc{\sm{x : X} R(x)}
  \end{align*}
\end{definition}

We use the following terminological conventions throughout the work.
\begin{definition}\label{def:prop:terminology}
  We refer to types that are propositions as \emph{properties}.  We
  refer to types which may have several inhabitants as \emph{data} or
  \emph{structures}.  We indicate the use of truncations with the verb
  ``to \emph{exist}'': so the claim ``there exists an $A$ satisfying
  $B$'' is to be interpreted as $\ex{a:A}B(a)$, and ``there exists an
  element of X'' is to be interpreted as $\trunc{X}$.  Most other
  verbs, including ``to have'', ``to find'', ``to construct'', ``to
  obtain'', ``to get'', ``to give'', ``to equip'', ``to yield'' and
  ``to compute'', indicate the absence of truncations.
\end{definition}
\begin{example}
  One attempt to define when $f:X\to Y$ is a surjection is
  \[
    \dpt{y:Y}\sm{x:X}fx=y.
  \]
  In fact, this is rather called split surjective, as from that
  structure, we obtain a map $Y\to X$ which is inverse to $f$: so we
  have defined when a function is a \emph{section}.  Rather defining
  surjectivity as
  \[
    \fa{y:Y}\ex{x:X}fx=y,
  \]
  by virtue of using the \emph{property} $\ex{x:X}fx=y$, does not
  yield an inverse map.

  In words, we say that $f$ is a surjection if for every $y:Y$ there
  \emph{exists} a pre-image.  The terminology that every $y:Y$
  \emph{has} a pre-image means a choice of pre-images, which
  formalizes sections.
\end{example}

\begin{example}
  Given a function $f:A\to B$, the \emph{image} of $f$ is the
  collection of elements $b:B$ that are reached by $f$, that is, for
  which there is an element $a:A$ such that $fa=_Bb$.  The
  propositions-as-types interpretation would formalize this as
  \[
    \sm{b:B}\sm{a:A}fa=_Bb.
  \]
  However, because the type $\sm{b:B}fa=_Bb$ is
  contractible~\cite[Lemma~3.11.8]{hottbook}, in fact this type is
  equivalent to the type $A$ itself, in the sense that there is a map
  with a left pointwise inverse and a right pointwise inverse, and so
  it does not adequately represent the image of $f$.

  Using truncations, we instead formalize the image of $f$ as the
  collection of elements of $B$ for which there \emph{exists} a
  pre-image along $f$, that is, in UTT the image of $f$ is formalized
  as:
  \[
    \sm{b:B}\ex{a:A}fa=_Bb,
  \]
  noting that the inner $\Sigma$ is truncated whereas the outer is
  not: we want to distinguish elements in the image of $f$, but we do
  not want to distinguish those elements based on a choice of
  pre-image in $A$.
\end{example}

\begin{example}
  We may compute the integral of a uniformly continuous function $f$
  as:
  \[
    \int_a^bf(x)\dif x=
    \lim_{n\to\infty}
    \frac{b-a}{n}
    \sum_{k=0}^{n-1}
    f\left(a+k\cdot\frac{b-a}{n}\right)
    .
  \]
  The construction of the limit value, e.g.\ as in
  Lemma~\ref{lem:dedekind:lim}, uses the modulus of uniform continuity
  of $f$ as in Definition~\ref{def:uniform:cty}.  However, since the
  integral is independent of the choice of modulus, by unique choice,
  e.g.\ as in Theorem~5.4 of~\cite{DBLP:journals/corr/KrausECA16}, the
  \emph{existence} (defined constructively as in
  Definition~\ref{def:props:interp}) of a modulus of uniform
  continuity suffices to compute the integral.  We discuss this
  further in Sections~\ref{sec:locators:limits}
  and~\ref{sec:integrals}.
\end{example}

\subsection{Dedekind reals}
\label{sec:dedekind-reals}

Although the technique of equipping numbers with locators can be
applied to any archimedean ordered field, for clarity and brevity we
will work with the Dedekind reals $\RD$ as defined in The Univalent
Foundations Program~\cite{hottbook}.  A more general description is
given in Booij~\cite{booij:thesis}.

\begin{definition}
  A \emph{predicate} $B$ on a type $X:\UU$ is a map $B:X\to\Prop$.
  For $x:X$ we write $(x\in B)\coloneqq B(x)$.
\end{definition}

A Dedekind real is defined by a pair $(L,U)$ of predicates on $\Q$
with some properties.  To phrase these properties succinctly, we use
the following notation for $x=(L,U)$:
\begin{align*}
  (q<x) &\coloneqq (q \in L) \qquad\text{and} \\
  (x<r) &\coloneqq (r \in U).
\end{align*}
This is justified by the fact that $q\in L$ holds iff $i(q)<x$, with
$i:\Q\hookrightarrow\RD$ the canonical inclusion of the rationals into the
Dedekind reals.

\begin{definition}\label{def:dedekind}
  A pair $x=(L,U)$ of predicates on the rationals is a \emph{Dedekind
    cut} or \emph{Dedekind real} if it satisfies the four Dedekind
  properties:
  \begin{enumerate}
  \item \emph{bounded:} $\ex{q : \Q} q<x$ and
    $\ex{r : \Q} x<r$.
  \item \emph{rounded:} For all $q,r : \Q$,
    \begin{align*}
      q<x & \Biimp \ex{q' : \Q} (q < q') \land (q'<x)
            \qquad\text{and}
      \\
      x<r & \Biimp \ex{r' : \Q} (r' < r) \land (x<r').
    \end{align*}
  \item \emph{transitive:} $(q<x)\land (x<r)\Implies (q<r)$ for all
    $q, r : \Q$.
  \item \emph{located:} $(q < r) \Implies (q<x) \lor (x<r)$ for all
    $q, r : \Q$.
  \end{enumerate}

  The collection $\RD:\UU'$ of pairs of predicates $(L,U)$ together
  with proofs of the four properties, collected in a $\Sigma$-type, is
  called the \emph{Dedekind reals}.
\end{definition}
\begin{remark}
  The Univalent Foundations Program~\cite{hottbook} has
  \emph{disjointness}
  \[
    \fa{q:\Q}\neg(x<q\land q<x)
  \]
  instead of the transitivity property, which is equivalent to it in
  the presence of the other conditions, and it is this disjointedness
  condition that we use most often in proofs.
\end{remark}
\begin{proof}
  Assuming transitivity, if $x<q\land q<x$, then transitivity yields
  $q<q$, which contradicts irreflexivity of $<$ on the rationals,
  which shows disjointedness.

  Conversely, if $q<x$ and $x<r$, apply trichotomy of the rationals on
  $q$ and $r$: in case that $q<r$ we are done, and in the other two
  cases we obtain $x<q$, contradicting disjointness.
\end{proof}
\begin{definition}
  For Dedekind reals $x$ and $y$, we define the strict ordering
  relation by
  \[
    x < y \coloneqq \ex{q:\Q}x<q<y
  \]
  where $x<q<y$ means $(x<q)\land(q<y)$, and their \emph{apartness} by
  \[
    x\apart y \coloneqq(x<y)\lor(y<x).
  \]
  As is typical in constructive analysis, we have
  $x\apart y\Implies \neg(x=y)$, but not the converse.
\end{definition}
The following proof that $\RD$ is Cauchy complete is based on
The Univalent Foundations Program~\cite[Theorem~11.2.12]{hottbook}.
\begin{lemma}\label{lem:dedekind:lim}
  The Dedekind reals are Cauchy complete.  More explicitly, given a
  modulus of Cauchy convergence $M$ for a sequence $x$ of Dedekind
  reals, i.e.\ a map $M:\Q_+\to\N$ such that
  \[
    \fa{\varepsilon:\Q_+}\fa{m,n:\N}m,n\geq
    M(\varepsilon)\Implies\abs{x_m-x_n}<\varepsilon,
  \]
  we can compute $l:\RD$ as the Dedekind cut defined by:
  \begin{align*}
    (q<l)
    & \coloneqq
      \ex{\varepsilon,\theta:\Q_+}
      (q+\varepsilon+\theta<x_{M(\varepsilon)}) ,\\
    (l<r)
    & \coloneqq
      \ex{\varepsilon,\theta:\Q_+}
      (x_{M(\varepsilon)}<r-\varepsilon-\theta) ,
  \end{align*}
  and $l$ is the limit of $x$ in the usual sense:
  \[
    \fa{\varepsilon:\Q_+}\ex{N:\N}\fa{n:\N}n\geq N\Implies\abs{x_n-l}<
    \varepsilon.
  \]
\end{lemma}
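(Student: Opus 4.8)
The plan is to verify the statement in three stages: first that the pair $(L,U)$ defined by the two displayed formulas is a Dedekind cut, then that this cut $l$ is the limit of $x$ in the stated $\varepsilon$-$N$ sense. Throughout I will use the modulus property of $M$ and the fact that $\RD$ is an archimedean ordered field, so that for rationals $q<r$ and a Dedekind real $z$ we may use locatedness (as a property) of $z$ at interior points, and use that $z<s$ together with $s\le t$ rational implies $z<t$, and symmetrically.

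For the cut conditions: \emph{boundedness} follows by picking any $\varepsilon_0,\theta_0:\Q_+$ and a rational below (resp.\ above) $x_{M(\varepsilon_0)}$, which exists since each $x_m$ is itself bounded; subtracting (resp.\ adding) $\varepsilon_0+\theta_0$ gives a witness. \emph{Roundedness} of $L$ is the statement $q<l \Iff \ex{q':\Q}(q<q')\land(q'<l)$: the backward direction is immediate since $q'<l$ unfolds to $\ex{\varepsilon,\theta}(q'+\varepsilon+\theta<x_{M(\varepsilon)})$ and $q<q'$ gives $q+\varepsilon+\theta<x_{M(\varepsilon)}$; for the forward direction, from a witness $(\varepsilon,\theta)$ with $q+\varepsilon+\theta<x_{M(\varepsilon)}$ we split $\theta$ as $\theta/2+\theta/2$ and take $q'\coloneqq q+\theta/2$, so that $(\varepsilon,\theta/2)$ witnesses $q'<l$ and $q<q'$. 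Roundedness of $U$ is symmetric. \emph{Disjointness} (which by the remark suffices in place of transitivity): suppose $q<l$ and $l<q$, so there are $(\varepsilon,\theta)$ and $(\varepsilon',\theta')$ with $q+\varepsilon+\theta<x_{M(\varepsilon)}$ and $x_{M(\varepsilon')}<q-\varepsilon'-\theta'$. Apply the modulus at $\min(\varepsilon,\varepsilon')$, or more simply note $m\coloneqq M(\varepsilon)$ and $n\coloneqq M(\varepsilon')$ both dominate $M(\min(\varepsilon,\varepsilon'))$, so $\abs{x_m-x_n}<\min(\varepsilon,\varepsilon')\le \varepsilon'$; combined with the two strict inequalities this forces $q+\varepsilon+\theta < x_m < x_n+\varepsilon' < q-\theta' +\varepsilon' -\varepsilon'= q-\theta'$, contradicting $\theta,\theta',\varepsilon:\Q_+$. \emph{Locatedness}: given rationals $q<r$, set $\delta\coloneqq(r-q)/4:\Q_+$, let $m\coloneqq M(\delta)$, and apply locatedness of the Dedekind real $x_m$ to the rationals $q+2\delta<r-2\delta$ (note $q+2\delta = q + (r-q)/2 < r - (r-q)/2 = r-2\delta$): in the first case $q+2\delta < x_m$, i.e.\ $q+\delta+\delta<x_{M(\delta)}$, so $(\delta,\delta)$ witnesses $q<l$; in the second case $x_m<r-2\delta$ gives $x_{M(\delta)}<r-\delta-\delta$, witnessing $l<r$.

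Finally, for the limit claim: fix $\varepsilon:\Q_+$, put $\varepsilon'\coloneqq\varepsilon/4$ and $N\coloneqq M(\varepsilon')$. For $n\ge N$ I must show $\abs{x_n-l}<\varepsilon$, i.e.\ both $l<x_n+\varepsilon$ and $x_n-\varepsilon<l$. For the first: pick any rational $s$ with $x_{M(\varepsilon')}<s$ and $s<x_{M(\varepsilon')}+\varepsilon'$ (using roundedness/archimedean density), and since $n,N\ge M(\varepsilon')$ we get $x_n < x_{M(\varepsilon')}+\varepsilon' < s + \varepsilon'$, and then taking the rational bound appropriately $x_n < x_n + \varepsilon$ needs the cut formula: choose a rational $r$ strictly between $x_n+\varepsilon'$ and $x_n+2\varepsilon'$, then $x_{M(\varepsilon')} < x_n+\varepsilon' < r$, so $x_{M(\varepsilon')} < r - \varepsilon' $ fails to be directly of the required shape — so instead choose $r$ with $x_{M(\varepsilon')} < r-\varepsilon'-\varepsilon'$ witnessing $l<r$, and arrange $r < x_n + \varepsilon$ by $r < x_{M(\varepsilon')}+\varepsilon' +\varepsilon' +\varepsilon'_{\text{slack}} < x_n + \varepsilon' + 3\varepsilon' = x_n+\varepsilon$; this shows $l<r<x_n+\varepsilon$. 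The bound $x_n-\varepsilon<l$ is symmetric, using the $L$-formula with a rational $q$ satisfying $x_n-\varepsilon<q$ and $q+\varepsilon'+\varepsilon' < x_{M(\varepsilon')}$.

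\emph{Main obstacle.} The cut conditions and disjointness are a routine bookkeeping of the two slack parameters $\varepsilon,\theta$; the genuinely fiddly part is the limit statement, where one has to juggle \emph{four} small quantities at once — the target $\varepsilon$, the modulus argument $\varepsilon'$, the two internal slacks $\varepsilon,\theta$ in the cut formula, plus the archimedean slack needed to name a rational strictly between two reals — and make sure the chosen rational bound simultaneously witnesses membership in $L$ (or $U$) of $l$ and lies on the correct side of $x_n\pm\varepsilon$. Getting the constants to line up (the factor $4$ above) is where care is required, but there is no conceptual difficulty once the slacks are tracked explicitly.
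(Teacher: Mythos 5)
Your overall route is the same as the paper's: verify the cut conditions directly from the two displayed formulas and then establish convergence from the modulus; proving disjointness in place of transitivity is acceptable given the remark following Definition~\ref{def:dedekind}, since you also prove roundedness. However, two steps fail as written. In locatedness you set $\delta=(r-q)/4$ and claim $q+2\delta<r-2\delta$; in fact $q+2\delta=(q+r)/2=r-2\delta$, so the pair is degenerate and locatedness of $x_{M(\delta)}$ cannot be applied to it. You need strict room, e.g.\ $\delta=(r-q)/5$ as in the paper (or keep $\delta=(r-q)/4$ but locate $x_{M(\delta)}$ at the pair $q+\tfrac{3}{2}\delta<r-\tfrac{3}{2}\delta$ and use the witnesses $(\delta,\delta/2)$).

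In disjointness you assert that $m=M(\varepsilon)$ and $n=M(\varepsilon')$ both dominate $M(\min(\varepsilon,\varepsilon'))$, hence $\abs{x_m-x_n}<\min(\varepsilon,\varepsilon')$. The modulus is not assumed monotone, and even under the natural (antitone) monotonicity the inequality goes the wrong way, so the $\min$ bound is not available. What is available is $\abs{x_m-x_n}<\max(\varepsilon,\varepsilon')$, by a case split on whether $m\geq n$ or $n\geq m$ (this is the bound the paper uses), or $\abs{x_m-x_n}<\varepsilon+\varepsilon'$ via the auxiliary index $\max(m,n)$ and the triangle inequality. Either bound suffices because the slacks $\theta,\theta'$ absorb it: with the $\max$ bound, $q+\varepsilon+\theta<x_m<x_n+\max(\varepsilon,\varepsilon')<q-\varepsilon'-\theta'+\max(\varepsilon,\varepsilon')$ is contradictory in both cases of the maximum. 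Your limit argument, despite the false starts in its write-up, is correct with $N=M(\varepsilon/4)$, and is in fact more direct than the paper's proof by contradiction, which only yields $\abs{x_n-l}\leq\varepsilon$ at $N=M(\varepsilon)$.
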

\begin{proof}
  Inhabitedness and roundedness of $l$ are straightforward.  For
  transitivity, suppose $q<l<r$, then we wish to show $q<r$.
  There exist $\varepsilon,\theta,\varepsilon',\theta':\Q_+$ with
  $q+\varepsilon+\theta<x_{M(\varepsilon)}$ and
  $x_{M(\varepsilon')}<r-\varepsilon'-\theta'$.  Now
  $\abs{x_{M(\varepsilon)}-x_{M(\varepsilon')}}\leq\max(\varepsilon,\varepsilon')$,
  so either $q+\theta<x_{M(\varepsilon')}$ or
  $x_{M(\varepsilon)}<r-\theta$, and in either case $q<r$.

  For locatedness, suppose $q<r$.  Set
  $\varepsilon\coloneqq\frac{r-q}{5}$, so that
  $q+2\varepsilon<r-2\varepsilon$.  By locatedness of $x_\varepsilon$,
  we have
  $(q+2\varepsilon<x_\varepsilon)\lor(x_\varepsilon<r-2\varepsilon)$,
  hence $(q<l)\lor(l<r)$.

  In order to show convergence, let $\varepsilon:\Q_+$, set
  $N\coloneqq M(\varepsilon)$, and let $n\geq N$.  We need to show
  $\abs{x_n-l}\leq\varepsilon$, or equivalently,
  $-\varepsilon\leq x_n-l\leq\varepsilon$.  For
  $x_n-l\leq\varepsilon$, suppose that $\varepsilon<x_n-l$, or
  equivalently, $l<x_n-\varepsilon$.  There exist
  $\varepsilon',\theta':\Q_+$ with
  $x_{M(\varepsilon')}<x_n-\varepsilon-\varepsilon'-\theta'$, or
  equivalently,
  $\varepsilon+\varepsilon'+\theta'<x_n-x_{M(\varepsilon')}$, which
  contradicts $M$ being a modulus of Cauchy convergence.  We can
  similarly show $-\varepsilon\leq x_n-l$.
\end{proof}
We denote limits of sequences by $\lim_{n\to\infty}x_n$.
\begin{example}[Exponential function]\label{ex:exp}
  We can define the exponential function $\exp:\RD\to\RD$ as
  $\exp(x)=\sum_{k=0}^\infty\frac{x^k}{k!}$.  We obtain the
  \emph{existence} of a modulus of Cauchy convergence by boundedness
  (as in Definition~\ref{def:dedekind}) of $x$.
\end{example}

\section{Locators}
\label{sec:locators}

The basic idea is that we equip real numbers with the structure of a
\emph{locator}, defined in Section~\ref{sec:locator:definition}.  The
purpose of the work is to show \emph{how} to extract discrete
information from an existing theory of real analysis in UTT.

The following example, which will be fully proved in
Theorem~\ref{thm:exact:ivt}, illustrates how we are going to use
locators.  Suppose $f$ is a pointwise continuous function, and $a<b$
are real numbers with locators.  Further suppose that $f$ is locally
nonconstant, that $f(x)$ has a locator whenever $x$ has a locator, and
that $f(a)\leq0\leq f(b)$.  Then we can find a root of $f$, which
comes equipped with a locator.  For the moment, we provide a proof
sketch, to motivate the techniques that we are going to develop in
this section.  We define sequences $a,b:\N\to\RD$ with
$a_n<a_{n+1}<b_{n+1}<b_n$, with $f(a_n)\leq0\leq f(b_n)$, with
$b_n-a_n\leq(b-a)\left(\frac{2}{3}\right)^n$, and such that all $a_n$
and $b_n$ have locators.  Set $a_0=a$, $b_0=b$.  Suppose $a_n$ and
$b_n$ are defined.  We will explain in the complete proof of
Theorem~\ref{thm:exact:ivt} how to to find $q_n$ with
$\frac{2a_n+b_n}{3}<q_n<\frac{a_n+2b_n}{3}$ and $f(q_n)\apart 0$.  The
important point for the moment, is that this is possible precisely
because we have locators.
\begin{itemize}
\item If $f(q_n)>0$, then set $a_{n+1}\coloneqq a_n$ and
  $b_{n+1}\coloneqq q_n$.
\item If $f(q_n)<0$, then set $a_{n+1}\coloneqq q_n$ and
  $b_{n+1}\coloneqq b_n$.
\end{itemize}
The sequences converge to a number $x$.  For any positive rational
$\varepsilon$, we have $\abs{f(x)}\leq\varepsilon$, hence $f(x)=0$.
This completes our sketch.

\medskip

We need to explain why the sequences $a$ and $b$ come equipped with
locators, and why their limit $x$ has a locator.  In fact, all $q_n$
are rationals, and hence have locators, as discussed in
Section~\ref{sec:rationals}.  The number $q_n$ is constructed using
the central techniques for observing data from locators, see
Sections~\ref{sec:logic-locators} and~\ref{sec:bounded-search}.  These
techniques can then also be used in Section~\ref{sec:computing-bounds}
to compute rational bounds.  Locators for $\frac{2a_n+b_n}{3}$ and
$\frac{a_n+2b_n}{3}$ can be constructed as locators for algebraic
operations, as in Section~\ref{sec:algebraic-operations}.  Locators
for limits are discussed in Section~\ref{sec:locators:limits}.

We compute signed digit representations for reals with locators in
Section~\ref{sec:sdr}.  Given a real and a locator, we strengthen the
properties for being a Dedekind cut into structure in
Section~\ref{sec:cuts-as-structure}.

\subsection{Definition}
\label{sec:locator:definition}

Recall that there is a canonical embedding of the rationals into
$\RD$.  Throughout the remainder of this paper we identify $q:\Q$ with
its embedding $i(q):\RD$.

Recall from Definition~\ref{def:dedekind} that a pair of predicates on
the rationals $x=(L,U)$ is \emph{located} if
$\fa{q,r:\Q}(q < r) \Implies (q<x) \lor (x<r)$.  Indeed, this property
holds for an arbitrary $x:\RD$ by cotransitivity of $<$.
\begin{definition}\label{def:loc}
  A \emph{locator} for $x:\RD$ is a function
  $\ell:\dpt{q,r:\Q}q<r\to(q<x)+(x<r)$.  We denote by $\SL(x)$ the
  type of locators on $x$.  That is, we replace the logical
  disjunction in locatedness by a disjoint sum, so that we get
  structure rather than property, allowing us to compute.
\end{definition}

A locator for $x$ can be thought of as falling in the Dedekind tradition of
considering the rationals to the left and right of $x$, in contrast with
Cauchy-style representations such as sequences of nested intervals.  Whereas
existing Dedekind-style developments directly define a fixed notion of real
number~\cite{bridges:vita}, locators are a structure that can be defined for an
arbitrary type of reals.

A locator can be seen as an analogue to a Turing machine representing
a computable real number, in the sense that it will provide us with
enough data to be able to type-theoretically compute, for instance,
signed-digit expansions.  However, a locator does not express that a
given real is a computable real: in the presence of excluded middle,
there exists a locator for every $x:\RD$, despite not every real being
computable.  To make this precise, we first formalize the principle of
excluded middle type-theoretically.
\begin{definition}\label{def:decidable}
  A \emph{decidable} proposition is a proposition $P$
  such that $P+\neg P$.  We have the collection
  \[
    \DProp\coloneqq\sm{P:\Prop}P+\neg P
  \]
  of decidable propositions.  We identify elements of $\DProp$ with
  their underlying proposition, and hence with their underlying types.
\end{definition}
\begin{remark}
  If $P$ and $Q$ are decidable, then so is $P\land Q$, and we use this
  fact in later developments.
\end{remark}
\begin{definition}
  The \emph{principle of excluded middle} is the claim that every
  proposition is decidable, that is:
  \[
    \PEM\coloneqq\dpt{P:\Prop}P+\neg P.
  \]
\end{definition}
\begin{lemma}\label{lem:locator:pem}
  Assuming $\PEM$, for every $x:\RD$, we can construct a locator for
  $x$.
\end{lemma}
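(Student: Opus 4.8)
The plan is to turn the \emph{property} of locatedness into the \emph{structure} of a locator by deciding each instance with $\PEM$. Fix $x=(L,U):\RD$ and rationals $q,r:\Q$ together with a proof of $q<r$. We must produce an element of $(q<x)+(x<r)$, and we do this by case analysis on whether $q<x$ holds.

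First I would note that $q<x$ is by definition $L(q)$, which is a proposition, so $\PEM$ gives $(q<x)+\neg(q<x)$. In the first case, output $\inl$ applied to the given witness of $q<x$. In the second case, we have $\neg(q<x)$ and we must exhibit a witness of $x<r$. Here I invoke the locatedness property carried by $x:\RD$ (one of the four Dedekind properties), which, applied to $q<r$, yields $(q<x)\lor(x<r)$, that is, $\trunc{(q<x)+(x<r)}$. Since $x<r$ is the proposition $U(r)$, this truncation may be eliminated into $x<r$: from an element of $(q<x)+(x<r)$, the left injection contradicts the standing hypothesis $\neg(q<x)$ and hence yields anything, while the right injection yields $x<r$ directly. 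So in this branch we output $\inr$ of the resulting witness. Packaging the two branches gives the desired $\ell:\dpt{q,r:\Q}q<r\to(q<x)+(x<r)$, i.e.\ an element of $\SL(x)$.

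I do not expect any real obstacle. The only conceptual point is that the passage from $(q<x)\lor(x<r)$ to $(q<x)+(x<r)$ is exactly what $\PEM$ affords for decidable propositions, and the truncation elimination in the second branch is sound precisely because its target $x<r$ is a proposition. A slightly more symmetric variant would decide $q<x$ and, separately if needed, $x<r$; but the asymmetric argument above already suffices and appeals to locatedness only in the single branch where it is required.
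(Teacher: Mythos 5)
Your proposal is correct and follows essentially the same route as the paper: use $\PEM$ to decide the proposition $q<x$, return the witness in the positive case, and in the negative case derive $x<r$ from $\neg(q<x)$ and $q<r$. The paper compresses that second branch into ``$x\leq q<r$, hence $x<r$'', whereas you make the underlying step explicit via locatedness and truncation elimination into the proposition $x<r$, which is a fine (and sound) way to spell out the same argument.
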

\begin{proof}
  For given rationals $q<r$, use $\PEM$ to decide $q<x$.  If $q<x$
  holds, we can simply return the proof given by our application of
  $\PEM$.  If $\neg(q<x)$ holds, then we get $x\leq q< r$ so that we
  can return a proof of $x<r$.
\end{proof}

\begin{remark}
  Note that we use the word ``proof'' also to refer to type-theoretic
  constructions of types that are not propositions.  This section
  contains many such proofs that do not prove propositions in the
  sense of Definition~\ref{def:prop}.
\end{remark}

In Section~\ref{sec:analysis}, we will define when a
function $f:\RD\to\RD$ \emph{lifts to locators}, which can be seen as an
analogue to a computable function on the reals.  There, the contrast
with the theory of computable analysis becomes more pronounced, as
the notion of lifting to locators is neither stronger nor weaker than
continuity.

\medskip

The structure of a locator has been used previously by The Univalent
Foundations Program in a proof that assuming either countable choice
or excluded middle, the Cauchy reals and the Dedekind reals
coincide~\cite[Section~11.4]{hottbook}.

The reader may wonder why we only choose to modify one of the Dedekind
properties to become structure.  We show in
Theorem~\ref{thm:dedekind:struct:prop:rel} that given only a locator,
we can obtain the remaining structures, corresponding to boundedness,
roundedness and transitivity, automatically.

\subsection{Terminology for locators}
\label{sec:locator:terminology}

A locator $\ell$ for a real $x$ can be evaluated by picking $q,r:\Q$ and
$\nu:q<r$.  The value $\ell(q,r,\nu)$ has type $(q<x)+(x<r)$, and so
$\ell(q,r,\nu)$ can be either in the left summand or the right summand.  We
say that ``we locate $q<x$'' when the locator gives a value in the
left summand, and similarly we say ``we locate $x<r$'' when the
locator gives a value in the right summand.

We often do case analysis on $\ell(q,r,\nu) : (q<x)+(x<r)$ by
constructing a value $c:C(q<_xr)$ for some type family
$C:(q<x)+(x<r)\to\UU$.  To construct $c$ we use the elimination
principle of $+$, for which we need to specify two values
corresponding to the disjuncts $q<x$ and $x<r$, so the two values have
corresponding types $\dpt{\xi:q<x}C(\inl(\xi))$ and
$\dpt{\zeta:x<r}C(\inr(\zeta))$.  These two values correspond to the
two possible answers of the locator, and we will often indicate this
by using the above terminology: the expression ``we locate $q<x$''
corresponds to constructing a value of the former type, and the
expression ``we locate $x<r$'' corresponds to constructing a value of
the latter type.

For example, for every real $x$ with a locator $\ell$, we can output a
Boolean depending on whether $\ell$ locates $0<x$ or $x<1$.  Namely, if
we locate $0<x$ we output true, and if we locate $x<1$ we output
false.  We use this construction in the proof of
Lemma~\ref{lem:magic:struct:taboo}.

\subsection{Locators for rationals}
\label{sec:rationals}

\begin{lemma}\label{lem:rationals}
  Suppose $x:\RD$ is a rational, or more precisely, that
  $\ex{s:\Q}(x=i(s))$, with $i:\Q\hookrightarrow\RD$ the canonical
  embedding of the rationals into the Dedekind reals, then $x$ has a
  locator.
\end{lemma}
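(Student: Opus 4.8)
The plan is to first turn the truncated hypothesis into an honest rational witness, and then build a locator for that rational by a decidable case split on the order of $\Q$. The key observation is that the type $\sm{s:\Q}(x=i(s))$ is itself a proposition, so the truncation is no obstacle. Indeed, given two elements $(s,p)$ and $(s',p')$ of this type, from the identifications $p:x=i(s)$ and $p':x=i(s')$ we obtain $i(s)=i(s')$, whence $s=s'$ since $i$ is injective (it is an order-embedding of $\Q$ into $\RD$); moreover $\RD$ is a set --- its underlying type $\pow(\Q)\times\pow(\Q)$ is a set because $\Prop$ is a set by propositional extensionality, and the four Dedekind conditions are propositions --- so the type $x=i(s')$ is a proposition and therefore $(s,p)=(s',p')$. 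Since $\sm{s:\Q}(x=i(s))$ is a proposition, the elimination rule of Definition~\ref{def:truncation} applied to the identity map yields a map $\ex{s:\Q}(x=i(s))\to\sm{s:\Q}(x=i(s))$; feeding it the hypothesis, we obtain a genuine pair $(s,p)$ with $s:\Q$ and $p:x=i(s)$.

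It now suffices to equip $i(s)$ with a locator, since an identification $x=i(s)$ lets us transport a locator for $i(s)$ back to one for $x$ (the assignment $y\mapsto\SL(y)$ is a type family on $\RD$). By definition of the embedding $i$, for rationals $q,r$ we have that $q<i(s)$ holds exactly when $q<s$ in $\Q$, and $i(s)<r$ holds exactly when $s<r$ in $\Q$. So, given $q,r:\Q$ with $\nu:q<r$, I would decide $q<s$ using decidability of the strict order on $\Q$: if $q<s$ we locate $q<i(s)$ by returning $\inl$ of the corresponding proof; if $\neg(q<s)$ then $s\leq q$, and together with $\nu$ this gives $s<r$, so we locate $i(s)<r$ by returning $\inr$ of that proof. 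This defines a function of type $\dpt{q,r:\Q}q<r\to(q<i(s))+(i(s)<r)$, i.e.\ an element of $\SL(i(s))$.

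I expect the only step needing care to be the first one: recognising that the stated hypothesis $\ex{s:\Q}(x=i(s))$, although phrased with a truncation, already suffices to extract an untruncated rational $s$ --- this is precisely what lets us conclude that $x$ \emph{has} a locator in the sense of Definition~\ref{def:prop:terminology}, rather than merely that one exists. The remainder is a routine case analysis on a decidable proposition and introduces no further truncations, so the constructed $\ell$ is honest data.
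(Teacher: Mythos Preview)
Your proof is correct and the case split on $q<s$ produces exactly the locator of the paper's first proof (phrased there via trichotomy on $q$ and $s$, but yielding the same answers). You supply a justification the paper leaves implicit --- that $\sm{s:\Q}(x=i(s))$ is a proposition, so the truncated hypothesis yields an honest $s$ --- while the paper instead offers a second construction (trichotomy on $s$ and $r$) precisely to illustrate that the locator one obtains is not unique.
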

We give two constructions, to emphasize that locators are not unique.
We use trichotomy of the rationals, namely, for all $a,b:\Q$,
\[
  (a<b)+(a=b)+(a>b).
\]
\begin{proof}[First proof]
  Let $q<r$ be arbitrary, then we want to give $(q<s)+(s<r)$.  By
  trichotomy of the rationals applied to $q$ and $s$, we have
  \[
    (q<s)+(q=s)+(q>s)
  \]
  In the first case $q<s$, we can locate $q<s$.  In the second case
  $q=s$, we have $s=q<r$, so we locate $s<r$.  In the third case, we
  have $s<q<r$, so we locate $s<r$.
\end{proof}
\begin{proof}[Second proof]
  Let $q<r$ be arbitrary, then we want to give $(q<s)+(s<r)$.  By
  trichotomy of the rationals applied to $s$ and $r$, we have
  \[
    (s<r)+(s=r)+(s>r)
  \]
  In the first case $s<r$, we can locate $s<r$.  In the second case
  $s=r$, we have $q<r=s$, so we locate $q<s$.  In the third case, we
  have $q<r=s$, so we locate $q<s$.
\end{proof}
In the case that $q<s<r$, the first construction locates $s<r$,
whereas the second construction locates $q<s$.  In particular, given a
pair $q<r$ of rationals, the first proof locates $q<0$ if $q$ is
indeed negative, and $0<r$ otherwise.  The second proof locates $0<r$
if $r$ is indeed positive, and $q<0$ otherwise.  Note that these
locators disagree when $q<0<r$, illustrating that locators are not
unique.

\subsection{The logic of locators}
\label{sec:logic-locators}

Our aim is to combine properties of real numbers with the structure of
a locator to make discrete observations.

If one \emph{represents} reals by Cauchy sequences, one obtains lower
bounds immediately from the fact that any element in the sequence
approximates the real up to a known error.  As a working example, we
show, perhaps surprisingly, that we can get a lower bound for an real
$x$, that is an element of $\sm{q:\Q}q<x$, from the locator alone.

Recall that Dedekind reals are bounded from below, so that
$\ex{q:\Q}q<x$.  We will define a proposition $P$ which \emph{gives}
us a bound, in the sense that we can use the elimination rule for
propositional truncations to get a map
\[
  (\ex{q:\Q}q<x)\to P,
\]
and then we can extract a bound using a simple projection map
\[
  P\to(\sm{q:\Q}q<x).
\]

More concretely, we define a type of rationals which are bounds for
$x$ and which are \emph{minimal} in a certain sense.  The minimality
is \emph{not} intended to find tight bounds, but is intended to make
this collection of rationals into a proposition: in other words,
minimality ensures that the answer is unique, so that we may apply the
elimination rule for propositional truncations.

Our technique has two central elements: reasoning about the structure
of locators using propositions, and the construction of a
unique answer using bounded search (Section~\ref{sec:bounded-search}).

Given a locator $\ell:\SL(x)$, $q,r:\Q$ and $\nu:q<r$, we have the notation
\[
  q<_x^\ell r\coloneqq \ell(q,r,\nu): (q<x)+(x<r),
\]
leaving the proof of $q<r$ implicit.  We further often drop the choice
of locator, writing $q<_xr$ for $q<_x^\ell r$.

\begin{lemma}
  For types $A$ and $B$, we have
  \[
    A+B\simeq \sm{P:\DProp}(P\to A)\times(\neg P\to B).
  \]
\end{lemma}
\begin{proof}
  For a given element $x:A+B$, the proposition $P$ is defined to hold
  when $x$ an given by an element of $A$, and false otherwise, so that
  the two conditions on $P$ hold.  Vice versa, for a given proposition
  $P$ we simply decide $P$ to obtain the respective element of $A+B$.
  It has to be checked that these two constructions result in an
  equivalence.
\end{proof}
\begin{lemma}\label{lem:loc:dec}
  The type $\SL(x)$ of Definition~\ref{def:loc} is equivalent to the type
  \begin{align*}
     &\sm{\gL : \dpt{q, r : \Q} q < r \to \DProp} \\
    &\quad\phantom{{}\times{}}(\dpt{q, r : \Q} \dpt{\nu : q < r} \gL( q, r, \nu) \to q < x) \\
    &\quad\times(\dpt{q, r : \Q} \dpt{\nu : q < r} \neg \gL (q, r, \nu) \to x
      < r) .
  \end{align*}
\end{lemma}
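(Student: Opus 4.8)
The plan is to construct maps back and forth and check they are mutually inverse; since we are comparing two $\Sigma$-types, it suffices to work fibrewise after matching up the first components. Starting from a locator $\ell:\SL(x)$, I would define $\gL(q,r,\nu)$ to be the proposition ``$\ell(q,r,\nu)$ lies in the left summand'', i.e.\ the decidable proposition $\isLeft(\ell(q,r,\nu))$, where for $z:(q<x)+(x<r)$ the type $\isLeft(z)$ is $\Sigma_{\xi:q<x}(z = \inl(\xi))$, which is a proposition because $q<x$ is a proposition and the identity type of a sum is as simple as one expects; its decidability comes from doing a case split on $z$. The two projection-like maps are then immediate: if $\gL(q,r,\nu)$ holds we have extracted a term of $q<x$ from its first component, and if $\neg\gL(q,r,\nu)$ holds then $\ell(q,r,\nu)$ cannot be $\inl$, so by case analysis it is $\inr(\zeta)$ for some $\zeta:x<r$, giving $x<r$. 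Conversely, given $(\gL, f_L, f_R)$ in the second type, I would define a locator by: on input $q,r,\nu$, decide the proposition $\gL(q,r,\nu)$ using its $\DProp$ structure; in the positive case output $\inl(f_L(q,r,\nu,\_))$, and in the negative case output $\inr(f_R(q,r,\nu,\_))$.

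For the round trip starting and ending at $\SL(x)$: beginning with $\ell$, forming $\gL = \isLeft\circ\ell$, and then building the new locator $\ell'$, I need $\ell'(q,r,\nu) = \ell(q,r,\nu)$. Case-split on $\ell(q,r,\nu)$: if it is $\inl(\xi)$ then $\isLeft$ holds, the decidability procedure takes the positive branch, and $\ell'$ outputs $\inl$ of the witness extracted from $\isLeft(\inl(\xi))$, which is $\xi$ up to the propositionality of $q<x$; if it is $\inr(\zeta)$ then $\isLeft$ is false, $\ell'$ takes the negative branch and outputs $\inr$ of the extracted witness, again equal to $\zeta$ by propositionality of $x<r$ — and pointwise equality of functions suffices by function extensionality. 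For the other round trip, starting from $(\gL, f_L, f_R)$: the derived locator $\ell'$ satisfies $\isLeft(\ell'(q,r,\nu)) \Iff \gL(q,r,\nu)$ by construction (positive branch gives $\inl$, negative gives $\inr$), so the recovered first component equals $\gL$ by function extensionality and propositional extensionality (both sides land in $\DProp$, whose identity type reduces to that of $\Prop$ since $P + \neg P$ is a proposition when $P$ is); the two function components $f_L, f_R$ then agree automatically because they are $\Pi$-types of propositions, so equality is pointwise and each fibre is a proposition.

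The main obstacle is bookkeeping rather than mathematical depth: one must be careful that $\isLeft(z)$ is genuinely a proposition (needing that $(q<x)+(x<r)$ has decidable, in fact h-propositional, equality on each summand, which follows since $q<x$ and $x<r$ are propositions and the two summands are distinguishable), and that the identifications in the round-trip computations only ever invoke propositionality of $q<x$, $x<r$, or of elements of $\Prop$/$\DProp$, so that no coherence conditions beyond function extensionality and propositional extensionality are required. Once $\isLeft$ is set up correctly, every verification is a routine case analysis, and the equivalence follows by the standard criterion that a pair of maps exhibiting a logical equivalence between types, at least one of which (here each fibre, after the first-component identification) is a proposition, is an equivalence — or more directly by checking the two composites are pointwise the identity.
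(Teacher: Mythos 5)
Your construction is exactly the one the paper uses: define $\gL$ as $\isLeft$ of the locator's value and, conversely, build a locator by deciding $\gL$, then verify the two round trips. The paper leaves the final check (``it has to be checked that these two constructions result in an equivalence'') to the reader, and your verification via propositionality of $q<x$, $x<r$, function extensionality and propositional extensionality correctly supplies those elided details.
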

\begin{proof}
  The previous lemma yields the equivalence
  \begin{align*}
    \SL(x)\simeq{} & \dpt{q, r : \Q} q < r \to\\
                &\quad\sm{P:\DProp}(P\to q<x)\times(\neg P \to x<r),
  \end{align*}
  and then we can apply Theorems~2.15.5 and~2.15.7 in The Univalent Foundations
  Program~\cite{hottbook} to distribute the $\Pi$-types over $\Sigma$ and
  $\times$.
\end{proof}
\begin{remark}
  We emphasize that, confusingly, $\gL( q, r, \nu)$ is defined
  type\hyp{}theoretically as $\operatorname{isLeft}(q<_x^\ell r)$.
\end{remark}

\begin{definition}\label{def:gives:lower:upper}
  For a real $x$ with a locator $\ell$ and rationals $q<r$, we write
  \[
    \gL(q<_x^\ell r) \qquad\text{or}\qquad
    \gL(q<_xr)
  \]
  for the decidable proposition $\gL(q,r,\nu)$ obtained from
  Lemma~\ref{lem:loc:dec}.
  We write
  \[
    \gU(q<_x^\ell r) \qquad\text{or}\qquad
    \gU(q<_xr)
  \]
  to be the negation of $\gL(q<_xr)$: so it is the proposition which
  is true if we locate $x<r$.
\end{definition}

\begin{remark}
  In general, if we have $q'<q<r$, then $\gL({q<_xr})$ does \emph{not}
  imply $\gL(q'<_xr)$.
\end{remark}

\begin{lemma}\label{lem:g:dec}
  For any real $x$ with a locator $\ell$ and rationals $q<r$,
  \begin{align*}
    \neg(q<x)&\Implies\gU(q<_x^\ell r),\qquad\text{and}\\
    \neg(x<r)&\Implies\gL(q<_x^\ell r).
  \end{align*}
\end{lemma}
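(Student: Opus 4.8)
The plan is to reason by a simple case analysis, either directly on the value $\ell(q,r,\nu):(q<x)+(x<r)$, or equivalently on the decidable proposition $\gL(q<_x^\ell r)$ using the two defining clauses from Lemma~\ref{lem:loc:dec}. Recall (from the remark following Lemma~\ref{lem:loc:dec}) that $\gL(q<_x^\ell r)$ is by definition $\isLeft(q<_x^\ell r)$, so it holds precisely when the locator lands in the left summand $q<x$, and $\gU(q<_x^\ell r)$ is its negation, holding precisely when the locator lands in the right summand $x<r$ (Definition~\ref{def:gives:lower:upper}).

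For the first implication, assume $\neg(q<x)$. Since $\gL(q<_x^\ell r)$ is a decidable proposition, we split on $\gL(q<_x^\ell r)+\neg\gL(q<_x^\ell r)$. In the left case, the first clause of Lemma~\ref{lem:loc:dec} yields $q<x$, contradicting the hypothesis; hence this case is vacuous and we are necessarily in the right case $\neg\gL(q<_x^\ell r)$, which is exactly $\gU(q<_x^\ell r)$. For the second implication, assume $\neg(x<r)$ and again decide $\gL(q<_x^\ell r)$: in the right case the second clause of Lemma~\ref{lem:loc:dec} yields $x<r$, contradicting the hypothesis, so we must be in the left case, which is $\gL(q<_x^\ell r)$.

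There is essentially no obstacle here; the statement is a direct consequence of the fact that a locator's left summand carries a proof of $q<x$ and its right summand carries a proof of $x<r$. The only point requiring a little care is bookkeeping: one must keep straight that $\gL$ and $\gU$ refer to which summand the locator lands in, not to which of $q<x$, $x<r$ happens to be true in the underlying Dedekind real; the argument above uses only the former together with the derivation rules packaged in Lemma~\ref{lem:loc:dec}.
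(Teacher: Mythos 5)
Your proof is correct and takes essentially the same route as the paper: both arguments rest on the two defining clauses of Lemma~\ref{lem:loc:dec} together with decidability of $\gL(q<_x^\ell r)$. The paper phrases this as taking contrapositives and then applying $\neg\neg A\Implies A$ for decidable $A$, whereas you unfold that last step into an explicit case split on $\gL(q<_x^\ell r)+\neg\gL(q<_x^\ell r)$ and discharge the impossible branch by contradiction --- the same reasoning in a slightly different presentation.
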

\begin{proof}
  From the defining properties of $\gL$ in Lemma~\ref{lem:loc:dec}, we
  know
  \begin{align*}
    \gL( q<_x^\ell r) &\Implies (q < x),\qquad\text{and}\\
    \neg\gL (q<_x^\ell r)&\Implies(x < r) .
  \end{align*}
  The contrapositives of these are, respectively:
  \begin{align*}
    \neg(q<x)&\Implies\neg\gL( q<_x^\ell r),\qquad\text{and}\\
    \neg(x<r)&\Implies\neg\neg\gL (q<_x^\ell r).
  \end{align*}
  Using the fact that $\neg\neg A\Implies A$ when $A$ is decidable,
  this is the required result.
\end{proof}
\begin{example}
  Let $x$ be a real equipped with a locator.  We can
  type-theoretically express that the locator must give certain
  answers.  For example, if we have $q<r<x$, shown visually as
  \begin{center}
    \begin{tikzpicture}
      \draw[->,semithick] (0,0) -- (-3,0); \draw[|->,semithick] (0,0)
      -- (3,0); \draw[|->,semithick] (-2,0) -- (3,0);
      \draw[|->,semithick] (-1,0) -- (3,0); \draw (0, -0.3) node
      {$x$}; \draw (-2,-0.3) node {$q$}; \draw (-1.5,-0.3) node {$<$};
      \draw (-1,-0.3) node {$r$}; \draw (-4, 0) node {$\RD$};
    \end{tikzpicture}
  \end{center}
  we must locate $q<x$, because $\neg(x<r)$.  In other words, we
  obtain truth of the proposition $\gL(q<_xr)$: the property
  $\neg(x<r)$ yielded a property of the structure $q<_xr$.
\end{example}

Continuing our working example of computing a lower bound, for any
$q:\Q$ we have the claim
\[
  P(q)\coloneqq\gL(q-1<_xq)
\]
that we locate $q-1<x$.  This claim is a decidable proposition.  And
from the existence $\ex{q:\Q}q<x$ of a lower bound for $x$, we can
deduce that $\ex{q:\Q}P(q)$, because if $q<x$ then $\neg(x<q)$ and
hence the above lemma applies.  If we manage to \emph{find} a $q:\Q$
for which $P(q)$ holds, then we have certainly found a lower bound of
$x$, namely $q-1$.

\subsection{Bounded search}
\label{sec:bounded-search}

Even though the elimination rule for propositional truncation in
Definition~\ref{def:truncation} only constructs maps into
propositions, we can use elements of propositional truncations to
obtain witnesses of non-truncated types --- in other words, we can
sometimes obtain structure from property.

\begin{theorem}[Escard\'o~\cite{escardo:nat:dec},~\cite{escardo:xu:inconsistency},~{\cite[Exercise~3.19]{hottbook}}]\label{thm:nat-dec}
  Let $P:\N\to\DProp$.  If $\ex{n:\N}P(n)$ then we can construct an
  element of $\sm{n:\N}P(n)$.
\end{theorem}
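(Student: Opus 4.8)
The plan is to reduce to a \emph{minimal witness} and exploit that "the least $n$ with $P(n)$" is unique, hence lives in a proposition, so that the truncation-elimination rule of Definition~\ref{def:truncation} becomes applicable. Concretely, set
\[
  Q(n)\coloneqq P(n)\times\dpt{m:\N}(m<n\to\neg P(m)).
\]
I would then establish three facts: (i) $\sm{n:\N}Q(n)$ is a proposition; (ii) there is a map $(\ex{n:\N}P(n))\to\trunc{\sm{n:\N}Q(n)}$; and (iii) there is a map $(\sm{n:\N}Q(n))\to(\sm{n:\N}P(n))$, namely $(n,(p,\_))\mapsto(n,p)$. Granting these, (i) together with Definition~\ref{def:truncation} yields a map $\trunc{\sm{n:\N}Q(n)}\to\sm{n:\N}Q(n)$ by feeding the identity map into the eliminator, and since $\ex{n:\N}P(n)$ is by definition $\trunc{\sm{n:\N}P(n)}$, composing (ii), this map, and (iii) produces the desired element of $\sm{n:\N}P(n)$.

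For (i), first observe that each $Q(n)$ is a proposition: $P(n)$ is one because it is a decidable proposition, $\neg P(m)$ is one, the function type $(m<n)\to\neg P(m)$ is one since its codomain is (no assumption on $m<n$ is needed), the $\Pi$-factor is then a proposition by function extensionality, and a product of two propositions is a proposition. It remains to see that any two elements of $\sm{n:\N}Q(n)$ share their first component, after which propositionality of the fibres finishes the argument (this is the standard fact that a $\Sigma$-type over $\N$ of a proposition-valued family with collapsing first component is itself a proposition). So let $(n,(p_n,\mu_n))$ and $(m,(p_m,\mu_m))$ be given; by trichotomy of $<$ on $\N$ it suffices to rule out $n<m$, and in that case $\mu_m$ applied to $n$ gives $\neg P(n)$, contradicting $p_n$; by symmetry $m<n$ is impossible, so $n=m$.

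For (ii), since the codomain $\trunc{\sm{n:\N}Q(n)}$ is a proposition, by Definition~\ref{def:truncation} it suffices to build a map $(\sm{n:\N}P(n))\to\trunc{\sm{n:\N}Q(n)}$, and this is the place where decidability is used, via a bounded search. By induction on $n$ I would prove, for each $n:\N$, the dichotomy
\[
  \Bigl(\dpt{m:\N}(m\le n\to\neg P(m))\Bigr)+\Bigl(\sm{m:\N}Q(m)\Bigr).
\]
At the base, decide $P(0)$: if it holds, return $Q(0)$ in the right summand (its minimality clause is vacuous), otherwise $\neg P(0)$ gives the left summand. In the step, if the right summand already holds, keep it; otherwise we have $\neg P(m)$ for all $m\le n$, so deciding $P(n+1)$ either produces $Q(n+1)$ in the right summand or extends the left summand. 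Now, given $(n,p)$ with $p:P(n)$, instantiate this dichotomy at $n$: the left summand is impossible since it would yield $\neg P(n)$, so we obtain an element of $\sm{m:\N}Q(m)$ and apply the truncation map.

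I expect the only real subtlety to be part~(i): one must invoke function extensionality to know the minimality clause is a proposition, and trichotomy of $<$ on $\N$ to force the first components to agree. Parts~(ii) and~(iii) are routine: (iii) is just a projection, and (ii) is a decidable induction that is the sole place where the hypothesis $P:\N\to\DProp$ — rather than merely $P:\N\to\Prop$ — is needed.
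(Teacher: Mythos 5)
Your proof is correct and follows essentially the same route as the paper: both arguments pass to the type of \emph{minimal} witnesses, observe that this type is a proposition (your clause $\dpt{m:\N}(m<n\to\neg P(m))$ versus the paper's $\fa{k:\N}k\leq n\Implies P(k)\Implies n=k$ is only a cosmetic difference), produce such a witness by a decidable bounded search up to the given $n$, and then eliminate the truncation into that proposition before projecting. The extra detour through $\trunc{\sm{n:\N}Q(n)}$ is harmless and collapses to the paper's direct elimination.
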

\journalonly{
\begin{proof}
  Define the type of \emph{least} numbers satisfying $P$ as
  $\sm{n:\N}P(n)\times\minim(n,P)$, where
  \[
    \minim(n,P)\coloneqq
    \fa{k:\N}k\leq n\Implies P(k)\Implies n=k
  \]
  expresses that $n$ is minimal with respect to $P$, and observe that
  $\sm{n:\N}P(n)\times\minim(n,P)$ is a proposition.  We have
  \[
    \left(\sm{n:\N}P(n)\right) \to \left( \sm{n:\N}P(n) \times
      \minim(n,P)\right)
  \]
  by a bounded search: given a natural number $n$ that satisfies $P$,
  we can find the \emph{least} natural number satisfying $P$, by
  searching up to $n$.  Using the elimination rule for propositional
  truncations, we obtain the dashed vertical map in the following
  diagram.
  \begin{center}
    \begin{tikzcd}
      \sm{n:\N}P(n)
      \arrow[r,"\truncm{\,\cdot\,}"]
      \arrow{rd}[below,rotate=-15]{\text{bounded search}}
      & \ex{n:\N}P(n)
      \arrow[d,dashrightarrow,"\exists!"]
      \\
      &
      {\qquad\qquad\sm{n:\N}P(n)\times\minim(n,P)}
      \arrow[d,"\operatorname{proj}"]
      \\
      & \sm{n:\N}P(n)
    \end{tikzcd}
  \end{center}
  The vertical composition is the required result.
\end{proof}
}
\journalonly{
\begin{remark}
  There are different ways to obtain an element of $\sm{n:\N}P(n)$
  from an element of $\ex{n:\N}P(n)$.

  \begin{enumerate}
  \item The output doesn't have to be the smallest natural satisfying
    P, since you could choose a strange ordering on the naturals, and,
    for example, search backwards between 0 and 100, and ordinary
    (increasing) above 100.
  \item The output doesn't even have to be the minimal number --- not
    even in a strange ordering of the naturals. For a (contrived)
    example, an ill-informed search could choose to always test $P(0)$
    through $P(5)$, and output the least $i$ between 0 and 4 for which
    $P(i)$ and $P(i+1)$ are both true, if it exists, and otherwise do
    ordinary bounded search starting from 0. This computation always
    succeeds since, in the worst case, we fall back to the bounded
    search, which we know works. But the output value is not minimal
    in any sense, since it may output 4 even if $P(0)$ also holds (but
    $P(1)$ doesn't), but may also output 0 even if $P(4)$ also holds
    (but $P(5)$ doesn't).
  \end{enumerate}
\end{remark}
}

\begin{remark}
  In general, we don't have $\trunc{X}\to X$ for all types $X$, as
  this would imply excluded
  middle~\cite{DBLP:journals/corr/KrausECA16}.  But for some types
  $X$, we do have $\trunc{X}\to X$, namely when $X$ has a constant
  endomap~\cite{DBLP:journals/corr/KrausECA16}.
\end{remark}

Even without univalence, Theorem~\ref{thm:nat-dec} also works for any
type equivalent to $\N$.
\begin{corollary}\label{cor:enum:dec}
  Let $A$ be a type and $e:\N\eqv A$ be an equivalence, that is, a
  function $\N\to A$ with a left inverse and right inverse.  Let
  $P:A\to\DProp$.  If $\ex{a:A}P(a)$ then we can construct an element
  of $\sm{a:A}P(n)$.
\end{corollary}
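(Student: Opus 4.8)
The plan is to transport the problem along $e$ and then invoke Theorem~\ref{thm:nat-dec}. Since $e:\N\eqv A$ comes with a right inverse, fix a map $g:A\to\N$ with $e(g(a))=a$ for every $a:A$, and define $Q\coloneqq P\circ e:\N\to\DProp$. This is again a family of decidable propositions, since for each $n:\N$ the type $Q(n)$ is literally $P(e(n))$, one of the decidable propositions in the image of $P$.

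First I would produce a map $\sm{a:A}P(a)\to\sm{n:\N}Q(n)$: given $(a,p)$, set $n\coloneqq g(a)$ and transport the proof $p:P(a)$ along the identification $a=e(g(a))=e(n)$ to obtain an element of $P(e(n))$, i.e.\ of $Q(n)$. Applying the functoriality of propositional truncation to this map (via the elimination rule of Definition~\ref{def:truncation}) sends the hypothesis $\ex{a:A}P(a)\equiv\trunc{\sm{a:A}P(a)}$ to $\trunc{\sm{n:\N}Q(n)}\equiv\ex{n:\N}Q(n)$.

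Now I would apply Theorem~\ref{thm:nat-dec} to $Q:\N\to\DProp$, which turns $\ex{n:\N}Q(n)$ into an actual element $(n,q):\sm{n:\N}Q(n)$, so $q:P(e(n))$. The pair $(e(n),q)$ then inhabits $\sm{a:A}P(a)$, as required.

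I do not expect a genuine obstacle here; the construction is just Theorem~\ref{thm:nat-dec} conjugated by the equivalence. The two points deserving a moment's care are that $P\circ e$ still lands in $\DProp$ --- immediate, since decidability is inherited --- and that one must use a right inverse of $e$ (a section), not a left inverse, when moving a witness from $A$ back to $\N$, so that the transported proof really concerns $e(n)$ on the nose rather than some point merely equal to $a$.
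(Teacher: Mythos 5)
Your proposal is correct and is essentially the paper's own proof: both define the composite family $P\circ e:\N\to\DProp$, move the existence hypothesis across the equivalence by transporting a witness along the identification $a=e(e^{-1}(a))$ (your right inverse $g$ plays exactly the role of $e^{-1}$ in the paper), apply Theorem~\ref{thm:nat-dec}, and return $(e(n),q)$. No further comment is needed.
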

\begin{proof}
  Use Theorem~\ref{thm:nat-dec} with $P'(n)\coloneqq P(e(n))$.  In
  order to show $\ex{n:\N}P'(n)$, it suffices to show
  $\left(\sm{a:A}P(a)\right)\to\left(\sm{n:\N}P'(n)\right)$, so let
  $a:A$ and $p:P(a)$.  Then since $a=e(e^{-1}(a))$ we get
  $P(e(e^{-1}(a)))$ by transport.

  Hence from Theorem~\ref{thm:nat-dec} we obtain some
  $(n,p'):\sm{n:\N}P'(e(n))$, so we can output $(e(n),q)$.
\end{proof}

\subsection{Computing bounds}
\label{sec:computing-bounds}

We are now ready to finish our running example of
computing a lower bound for $x$.

\begin{lemma}\label{lem:bounders}
  Given a real $x:\RD$ equipped with a locator, we get bounds
  for $x$, that is, we can find $q,r:\Q$ with $q<x<r$.
\end{lemma}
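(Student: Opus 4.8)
The plan is to mimic the running example that the author has been building throughout Sections~\ref{sec:logic-locators}--\ref{sec:bounded-search}, treating the lower bound and the upper bound symmetrically. For the lower bound, recall the decidable predicate $P(q)\coloneqq\gL(q-1<_xq)$ on $\Q$: by Lemma~\ref{lem:g:dec}, if $q<x$ then $\neg(x<q)$, hence $\gL(q-1<_xq)$ holds, so $P(q)$ holds. Since $x:\RD$ is bounded below, we have $\ex{q:\Q}q<x$, and therefore $\ex{q:\Q}P(q)$. Symmetrically, for the upper bound define the decidable predicate $P'(r)\coloneqq\gU(r<_xr+1)$; if $x<r$ then $\neg(r<x)$, so by Lemma~\ref{lem:g:dec} $\gU(r<_xr+1)$ holds, and from boundedness above, $\ex{r:\Q}x<r$ gives $\ex{r:\Q}P'(r)$.

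Next I would turn these existence statements into actual witnesses. The predicates $P$ and $P'$ take values in $\DProp$ (the decidability of $\gL$ and $\gU$ comes from Lemma~\ref{lem:loc:dec} and Definition~\ref{def:gives:lower:upper}), and $\Q$ is equivalent to $\N$, so Corollary~\ref{cor:enum:dec} applies: from $\ex{q:\Q}P(q)$ we obtain a genuine $\sm{q:\Q}P(q)$, and likewise a genuine $\sm{r:\Q}P'(r)$. Unwinding $P(q)$, the truth of $\gL(q-1<_xq)$ gives, via the defining property of $\gL$ in Lemma~\ref{lem:loc:dec}, a proof of $q-1<x$; similarly the truth of $P'(r)$ yields $x<r+1$. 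So we have found rationals $q-1$ and $r+1$ with $q-1<x$ and $x<r+1$, and relabelling these as the desired bounds (or just taking the pair $(q-1,r+1)$) completes the construction. The inequality $q-1<r+1$ required by the statement $q<x<r$ is automatic from transitivity of $<$ on $\RD$ together with $q-1<x<r+1$, or can be read off directly.

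The one subtlety worth flagging is the direction of the implications in Lemma~\ref{lem:g:dec}: it is essential that we chose the ``slack'' rationals $q-1$ and $r+1$ rather than $q$ and $r$ themselves, because $\gL(q-1<_xq)$ is only forced once we know $\neg(x<q)$, which follows from $q<x$ by disjointness of the Dedekind cut, and symmetrically on the other side. Using $\gL(q<_xr)$ directly for the given witness $q<x$ would not work, since (as the remark after Definition~\ref{def:gives:lower:upper} warns) the locator is free to locate on either side when $q<x<r$. So the main obstacle is not any deep argument but rather bookkeeping: making sure the right decidable predicate is chosen so that Lemma~\ref{lem:g:dec} genuinely forces it, and then assembling Corollary~\ref{cor:enum:dec} correctly on both sides. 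Everything else is routine.
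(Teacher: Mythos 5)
Your proposal is correct and follows essentially the same route as the paper: the same decidable predicates $\gL(q-1<_xq)$ and $\gU(r<_xr+1)$, existence of witnesses from boundedness via Lemma~\ref{lem:g:dec}, and extraction of actual rationals through an enumeration of $\Q$ and Corollary~\ref{cor:enum:dec}. The subtlety you flag about needing the slack pair $q-1<q$ rather than $\gL(q<_xr)$ itself is exactly the point made in the paper's running example in Section~\ref{sec:logic-locators}.
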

\begin{proof}
  We pick any enumeration of $\Q$, that is, an equivalence
  $\N\eqv\Q$.  Set
  \[
    P(q)\coloneqq\gL(q-1<_xq).
  \]
  From Section~\ref{sec:logic-locators} we know that $\ex{q:\Q}P(q)$,
  and so we can apply Corollary~\ref{cor:enum:dec}.  We obtain
  $\sm{q:\Q}P(q)$, and in particular $\sm{q:\Q}q-1<x$.

  Upper bounds are constructed by a symmetric argument, using
  \[
    P(r)\coloneqq\gU(r<_xr+1).\qedhere
  \]
\end{proof}

We emphasize that even though we cannot decide $q<x$ in general, we
\emph{can} decide what the locator tells us, and this is what is
exploited in our development.  Given a real $x$ with a locator, the
above construction of a lower bound searches for a rational $q$ for
which we locate $q-1<x$.  We emphasize once more that the
rational thus found is minimal in the sense that it appears first in
the chosen enumeration of $\Q$, and \emph{not} a tight bound.

\begin{remark}\label{rem:bounders}
  The proof of Theorem~\ref{thm:nat-dec} works by an exhaustive, but
  bounded, search.  So our construction for Lemma~\ref{lem:bounders}
  similarly exhaustively searches for an appropriate rational $q$.
  The efficiency of the algorithm thus obtained can be improved:
  \begin{enumerate}
  \item We do not need to test every rational number: it suffices to
    test, for example, bounds of the form $\pm 2^{k+1}$ for $k:\N$, as
    there always exists a bound of that form.  Formally, such a
    construction is set up by enumerating a subset of the integers
    instead of enumerating all rationals, and showing the existence of
    a bound of the chosen form, followed by application of
    Corollary~\ref{cor:enum:dec}.
  \item More practically, Lemma~\ref{lem:bounders} shows that we may
    as well additionally equip bounds to reals that already have
    locators.  Then, any later constructions that use rational bounds
    can simply use these equipped rational bounds.  This is
    essentially the approach of interval arithmetic with open
    nondegenerate intervals.  We can also see this equipping of bounds
    as a form of memoization, which we can apply more generally.
  \end{enumerate}
\end{remark}

\begin{lemma}\label{lem:precise}
  For a real $x$ equipped with a locator and any positive rational
  $\varepsilon$ we can find $u,v:\Q$ with $u<x<v$ and
  $v-u<\varepsilon$.
\end{lemma}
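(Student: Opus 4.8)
The plan is to start from the bounds $q<x<r$ obtained from Lemma \ref{lem:bounders} and then repeatedly subdivide the interval $(q,r)$, using the locator to decide which half contains $x$, until the width drops below $\varepsilon$. Concretely, I would first pick $q_0,r_0:\Q$ with $q_0<x<r_0$ via Lemma \ref{lem:bounders}. Given $q_n<x<r_n$, let $m_n\coloneqq\frac{q_n+r_n}{2}$ and consider $q_n<_x r_n$ — wait, that is not quite what we need; instead apply the locator to the pair $q_n<m_n$... no, we want a split around the midpoint that still gives a strict inequality on both sides. The clean way: pick two rationals $s_n<t_n$ strictly inside $(q_n,r_n)$ with $t_n-s_n$ small, say $s_n\coloneqq q_n+\frac{r_n-q_n}{3}$ and $t_n\coloneqq q_n+\frac{2(r_n-q_n)}{3}$, so that $s_n<t_n$ and we may evaluate $s_n<_x t_n:(s_n<x)+(x<t_n)$. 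If we locate $s_n<x$, set $q_{n+1}\coloneqq s_n$, $r_{n+1}\coloneqq r_n$; if we locate $x<t_n$, set $q_{n+1}\coloneqq q_n$, $r_{n+1}\coloneqq t_n$. In either case $q_{n+1}<x<r_{n+1}$ and $r_{n+1}-q_{n+1}=\frac{2}{3}(r_n-q_n)$.

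By induction $r_n-q_n=\left(\frac{2}{3}\right)^n(r_0-q_0)$, so since $\left(\frac{2}{3}\right)^n\to0$ there is (constructively, by the archimedean property of $\Q$) a concrete $N:\N$ with $\left(\frac{2}{3}\right)^N(r_0-q_0)<\varepsilon$; one can compute $N$ from $\varepsilon$, $q_0$, $r_0$ by, e.g., a logarithm estimate, or more simply by a bounded search for the least $n$ with $\left(\frac{2}{3}\right)^n(r_0-q_0)<\varepsilon$, which terminates. Output $u\coloneqq q_N$ and $v\coloneqq r_N$: these satisfy $u<x<v$ and $v-u=\left(\frac{2}{3}\right)^N(r_0-q_0)<\varepsilon$ as required. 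Note that everything here is structure, not merely property: each step of the recursion does a case analysis on a value of $(s_n<x)+(x<t_n)$ produced by the locator, so no truncation elimination is needed and we genuinely \emph{find} $u$ and $v$.

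The only subtlety — and it is minor — is making sure the recursion is well-typed: the sequence of triples $(q_n,r_n,p_n)$ where $p_n$ witnesses $q_n<x<r_n$ is defined by ordinary recursion on $\N$, using the locator at each step to produce $p_{n+1}$ from $p_n$ together with either $s_n<x$ or $x<t_n$ (combined with the transitivity/ordering facts for $\RD$ and the rational arithmetic $q_n<s_n$, $t_n<r_n$). So the main work is purely the bookkeeping of the arithmetic inequalities $q_n<s_n<t_n<r_n$ and $r_{n+1}-q_{n+1}=\tfrac23(r_n-q_n)$, which is routine. I expect no real obstacle; the proof is a direct interval-bisection (here, trisection) argument enabled by the fact that the locator lets us \emph{decide} which subinterval to keep.
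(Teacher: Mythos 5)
Your construction is correct, but it takes a genuinely different route from the paper. You iterate: starting from the bounds $q_0<x<r_0$ of Lemma~\ref{lem:bounders}, you trisect and query the locator at $s_n<_x t_n$, shrinking the interval by a factor $\frac23$ each step, and you stop after an $N$ computed from the archimedean property of $\Q$; the invariant $q_n<x<r_n$ is a proposition carried through an ordinary recursion, and since the locator's output is an untruncated sum, the case analysis is legitimate structure, exactly as you say. The paper instead makes a single pass: from the same bounds $q<x<r$ it lays down one equidistant grid of mesh $\varepsilon/3$ covering $[q-\frac\varepsilon3,\,q+\frac{(n+1)\varepsilon}3]$, uses Lemma~\ref{lem:g:dec} to force the locator's answers at the two extremes (it must locate $q-\frac\varepsilon3<x$ at the left end and $x<q+\frac{(n+1)\varepsilon}3$ at the right end), and then finds by a finite search (a one-dimensional Sperner argument) an index $i$ where the answer flips from $\gL$ to $\gU$, outputting $u=q+\frac{i\varepsilon}3$, $v=q+\frac{(i+2)\varepsilon}3$ of width $\frac{2\varepsilon}3<\varepsilon$. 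What each buys: your trisection needs only about $\log_{3/2}\bigl((r_0-q_0)/\varepsilon\bigr)$ locator queries versus the paper's roughly $(r-q)/\varepsilon$, so it is the more efficient algorithm; the paper's version avoids the recursion bookkeeping and the explicit computation of $N$, and illustrates the paper's general methodology (Lemma~\ref{lem:g:dec} plus a decidable finite search) rather than interval shrinking. Your only informality is the appeal to a ``bounded search'' for the least suitable $n$: to be safe, just compute $N$ directly, e.g.\ from $(3/2)^n\geq 1+\frac n2$, which gives a concrete bound and makes the search (or the direct choice of $N$) unproblematic.
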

\begin{proof}
  The construction of bounds in Lemma~\ref{lem:bounders} yields
  $q,r:\Q$ with $q<x<r$.  We can compute $n:\N$ such that
  $r<q+\frac{n\varepsilon}3$.  Consider the equidistant subdivision
  \[
    q-\frac\varepsilon3,\,\,q,\,\,q+\frac\varepsilon3,\,\,q+\frac{2\varepsilon}3,\,\,\ldots,\,\,q+\frac{n\varepsilon}3,\,\,q+\frac{(n+1)\varepsilon}3.
  \]
  By Lemma~\ref{lem:g:dec}, necessarily $\gL(q-\frac\varepsilon3<_xq)$
  because $q<x$.  Similarly, we have
  $\gU{(q+\frac{n\varepsilon}3<_xq+\frac{(n+1)\varepsilon}3)}$ because
  $x<q+\frac{n\varepsilon}3$.

  For some $i$, which we can find by a finite search using a
  one-dimensional version of Sperner's lemma, we have
  \[
    \gL\left(q+\frac{i\varepsilon}3<_xq+\frac{(i+1)\varepsilon}3\right)
    \land
    \gU\left(q+\frac{(i+1)\varepsilon}3<_xq+\frac{(i+2)\varepsilon}3\right).
  \]
  For this $i$, we can output $u=q+\frac{i\varepsilon}3$ and
  $v=q+\frac{(i+2)\varepsilon}3$.
\end{proof}
\begin{remark}
  The above result allows us to compute arbitrarily precise bounds for
  a real number $x$ with a locator.  But, as in
  Remark~\ref{rem:bounders}, the above theorem shows that we may as
  well \emph{equip} an appropriate algorithm for computing arbitrarily
  precise lower and upper bounds to real numbers.  This may be a
  better idea when efficiency of the computation matters.
\end{remark}

\subsection{Locators for algebraic operations}
\label{sec:algebraic-operations}

If $x$ and $y$ are reals that we can compute with in an appropriate
sense, then we expect to be able to do so with $-x$, $x+y$,
$x\cdot y$, $x^{-1}$ (assuming $x\apart0$), $\min(x,y)$ and
$\max(x,y)$ as well.  In our case, that means that if $x$ and $y$ come
equipped with locators, then so should the previously listed values.

If one works with intensional real numbers, such as when they are
given as Cauchy sequences, then the algebraic operations are specified
directly on the representations.  This means that the computational
data is automatically present.  Since in our case the algebraic
operations are specified extensionally, they do not give any discrete
data, and so the construction of locators has to be done explicitly in
order to compute.

The algebraic operations can be defined for Dedekind cuts as in The
Univalent Foundations Program~\cite[Section~11.2.1]{hottbook}.  Recall
from Section~\ref{sec:dedekind-reals} that for a Dedekind cut
$x=(L,U)$ we write $q<x$ for the claim that $q:\Q$ is in the left cut
$L$.  In fact, now that we have identified $q:\Q$ with its canonical
embedding $i(q):\RD$ in the reals, we can simply understand $q<x$ as
$i(q)<_\RD x$, which coincides with the notation for Dedekind cuts.
In summary, we have the following relations for $x,y,z,w:\RD$ with
$w<0<z$ and $q,r:\Q$:
\begin{align*}
  q<-x&\Iff x<-q\\
  -x<r&\Iff -r<x\\
  q<x+y&\Iff \ex{s:\Q}s<x\land(q-s)<y\\
  x+y<r&\Iff \ex{t:\Q}x<t\land y<(r-t)\\\displaybreak[2]
  q<xy &\Iff \ex{a,b,c,d:\Q}q<\min(ac,ad,bc,bd)\\\displaybreak[0]
      &\qquad\land a<x<b\land c<y<d\\
  xy<r &\Iff \ex{a,b,c,d:\Q}\max(ac,ad,bc,bd)<r\\\displaybreak[0]
      &\qquad\land a<x<b\land c<y<d\\\displaybreak[2]
  q<z^{-1} &\Iff qz<1\\
  z^{-1}<r &\Iff 1<rz\\
  q<w^{-1} &\Iff 1<qw\\
  w^{-1}<r &\Iff rw<1\\\displaybreak[2]
  q<\min(x,y)&\Iff q<x\land q<y\\
  \min(x,y)<r&\Iff x<r\lor y<r\\
  q<\max(x,y)&\Iff q<x\lor q<y\\
  \max(x,y)<r&\Iff x<r\land y<r
\end{align*}

The Dedekind reals satisfy the Archimedean property, which can be
succinctly stated as the claim that for all $x,y:\RD$,
\[
  x<y\Implies\ex{q:\Q}x<q<y.
\]

We will use the following variation of the Archimedean property.
We write $\Q_+$ for the positive rationals.
\begin{lemma}\label{lem:arch:twice}
  For real numbers $x<y$, there \emph{exist} $q:\Q$ and
  $\varepsilon:\Q_+$ with $x<q-\varepsilon<q+\varepsilon<y$.
\end{lemma}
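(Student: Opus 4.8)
The plan is to apply the ordinary Archimedean property twice, once to create a gap above $x$ and once to shrink it so that the gap sits strictly between $x$ and $y$. First I would invoke the Archimedean property on $x<y$ to obtain (the existence of) a rational $s$ with $x<s<y$. Since existence is a proposition and the goal is also a proposition (an existential statement), I may assume $s$ concretely via the truncation-elimination rule. Now I have $x<s$, so applying the Archimedean property again to $x<s$ yields a rational $t$ with $x<t<s$. Setting $q\coloneqq\frac{t+s}{2}$ and $\varepsilon\coloneqq\frac{s-t}{2}$, which is positive since $t<s$, gives $q-\varepsilon=t$ and $q+\varepsilon=s$, so that $x<q-\varepsilon$ and $q+\varepsilon<y$ hold directly, and $q-\varepsilon<q+\varepsilon$ is immediate from $\varepsilon:\Q_+$. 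Packaging $q$ and $\varepsilon$ with these three inequalities into a $\Sigma$-type and truncating yields the claimed existence.

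An alternative, and perhaps slightly cleaner, route avoids the midpoint computation: apply the Archimedean property to $x<y$ to get $x<p<y$, then apply it to $x<p$ to get $x<q<p$, and finally apply it once more to $q<p$ (which holds since $x<q<p$) to get $q<m<p$; then $\varepsilon\coloneqq\min(q'-q, p-q')$ for a suitable intermediate point works — but in fact the two-application version with the explicit $q=\frac{t+s}{2}$, $\varepsilon=\frac{s-t}{2}$ is the most economical, since the rationals are a dense ordered field and these arithmetic manipulations are entirely routine. The key point is just that two nested applications of density suffice: the first produces a strict rational upper bound $s$ of $x$ below $y$, and the second produces a strict rational lower bound $t$ of $x$ below $s$, after which any rational strictly between $t$ and $s$ with a symmetric positive radius does the job.

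I do not anticipate a genuine obstacle here: the statement is essentially a mild repackaging of the Archimedean property, and the only thing to be careful about is the bookkeeping with truncations — namely that since the conclusion $\ex{q:\Q}\ex{\varepsilon:\Q_+}(\ldots)$ is itself a proposition, one is entitled to strip the truncations from the two intermediate existence claims obtained from the Archimedean property and reason with honest rationals, reassembling a truncated witness at the end. The arithmetic verifications that $q-\varepsilon=t$, $q+\varepsilon=s$, and $\varepsilon>0$ are one-line computations over $\Q$ and need no comment.
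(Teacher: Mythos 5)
Your proof is correct and follows essentially the same route as the paper: two applications of the Archimedean property, stripping the truncations because the goal is a proposition, then taking the midpoint and half-width of the resulting rational interval. The only (immaterial) difference is that you refine the interval below $s$ (getting $x<t<s<y$) where the paper refines above it (getting $x<s<t<y$).
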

\begin{proof}
  By a first application of the Archimedean property, we know
  $\ex{s:\Q}x<s<y$.  Since we are showing a proposition, we may assume
  to have such an $s:\Q$.  Now for $s<y$, by the Archimedean property,
  we know $\ex{t:\Q}s<t<y$, and again we may assume to have such a
  $t$.  Now set $q\coloneqq\frac{s+t}{2}$ and
  $\varepsilon\coloneqq\frac{t-s}{2}$.
\end{proof}
In particular, the above variation can be used to strengthen the
$\exists$ of the Archimedean property into $\Sigma$ when the reals
involved come equipped with locators.  Its corollary,
Corollary~\ref{cor:cotrans:rat}, is used to compute locators for
multiplicative inverses.
\begin{lemma}\label{lem:archimedean:struct}
  For reals $x$ and $y$ equipped with locators we have the Archimedean
  \emph{structure}
  \[
    x<y\to\sm{q:\Q}x<q<y.
  \]
\end{lemma}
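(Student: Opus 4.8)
The plan is to turn the existential Archimedean property into a bounded search over the rationals, exploiting the fact that, although $x<q$ and $q<y$ are not decidable, the locators deliver decidable \emph{certificates} for slightly weaker-looking statements that already suffice. Write $\ell_x:\SL(x)$ and $\ell_y:\SL(y)$ for the given locators. For $q:\Q$ and $\varepsilon:\Q_+$ we have $q-\varepsilon<q<q+\varepsilon$, so the following is a well-defined decidable proposition (a conjunction of decidable propositions, using the remark after Definition~\ref{def:decidable}):
\[
  P(q,\varepsilon)\coloneqq\gU\bigl((q-\varepsilon)<_x^{\ell_x}q\bigr)\land\gL\bigl(q<_y^{\ell_y}(q+\varepsilon)\bigr).
\]
By Lemma~\ref{lem:loc:dec} (in the form of Definition~\ref{def:gives:lower:upper}), $\gU\bigl((q-\varepsilon)<_x^{\ell_x}q\bigr)\Implies x<q$ and $\gL\bigl(q<_y^{\ell_y}(q+\varepsilon)\bigr)\Implies q<y$, so $P(q,\varepsilon)\Implies x<q<y$.

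Next I would check that $P$ is satisfiable under the hypothesis. Unfolding the definition, $x<y$ is $\ex{q:\Q}x<q<y$, and strengthening this using Lemma~\ref{lem:arch:twice} gives $\ex{q:\Q}\ex{\varepsilon:\Q_+}x<q-\varepsilon<q+\varepsilon<y$. Since $\ex{(q,\varepsilon):\Q\times\Q_+}P(q,\varepsilon)$ is a proposition, we may assume such $q,\varepsilon$. From $x<q-\varepsilon$, together with irreflexivity/disjointness of the cut $x$, we obtain $\neg\bigl((q-\varepsilon)<x\bigr)$, and hence $\gU\bigl((q-\varepsilon)<_x^{\ell_x}q\bigr)$ by Lemma~\ref{lem:g:dec}; symmetrically $q+\varepsilon<y$ gives $\neg\bigl(y<(q+\varepsilon)\bigr)$, hence $\gL\bigl(q<_y^{\ell_y}(q+\varepsilon)\bigr)$ by Lemma~\ref{lem:g:dec}. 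Thus $P(q,\varepsilon)$ holds, so $\ex{(q,\varepsilon):\Q\times\Q_+}P(q,\varepsilon)$.

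Finally I would fix any equivalence $\N\eqv\Q\times\Q_+$ (a product of countable types) and apply Corollary~\ref{cor:enum:dec} to $P$, producing an actual pair $(q,\varepsilon)$ with $P(q,\varepsilon)$. Projecting the conjunction and using the implications of the first paragraph yields $q:\Q$ together with proofs of $x<q$ and $q<y$, i.e.\ an element of $\sm{q:\Q}x<q<y$, as required.

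The main obstacle is choosing the decidable predicate correctly: one cannot search directly for a $q$ with $x<q<y$, since that property is not decidable, and the locators do not let us decide $x<q$ or $q<y$ either. The role of Lemma~\ref{lem:arch:twice} is precisely to supply the margin $\varepsilon$ around $q$ that lets Lemma~\ref{lem:g:dec} convert the non-decidable facts $x<q$, $q<y$ into the decidable certificates $\gU\bigl((q-\varepsilon)<_x^{\ell_x}q\bigr)$ and $\gL\bigl(q<_y^{\ell_y}(q+\varepsilon)\bigr)$ that the locators can actually evaluate. The remaining ingredients—countability of $\Q\times\Q_+$, closure of decidable propositions under conjunction, and the identification of $<$ on $\RD$ with membership in the cuts—are routine.
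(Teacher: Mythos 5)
Your proposal is correct and follows essentially the same route as the paper: the same decidable predicate $P(q,\varepsilon)\coloneqq\gU((q-\varepsilon)<_x q)\land\gL(q<_y(q+\varepsilon))$, satisfiability via Lemma~\ref{lem:arch:twice} together with Lemma~\ref{lem:g:dec}, and extraction of a witness by Corollary~\ref{cor:enum:dec} over an enumeration of $\Q\times\Q_+$. You merely spell out the details (disjointness giving the negations, and $P(q,\varepsilon)\Implies x<q<y$) that the paper's terser proof leaves implicit.
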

\begin{proof}
  Let $x$ and $y$ be reals equipped with locators.  By
  Lemma~\ref{lem:arch:twice}, there exist $q:\Q$ and
  $\varepsilon:\Q_+$ with $x<q-\varepsilon<q+\varepsilon<y$.  The
  following proposition is decidable for any $(q',\varepsilon')$ and
  we have $\ex{(q,\varepsilon):\Q\times\Q_+}P(q,\varepsilon)$:
  \[
    P(q',\varepsilon') \coloneqq
    \gU(q'-\varepsilon'<_xq') \land \gL(q'<_yq'+\varepsilon').
  \]
  Using Corollary~\ref{cor:enum:dec} we can find $(q',\varepsilon')$
  with $P(q',\varepsilon')$ and hence $x<q'<y$.
\end{proof}

\begin{corollary}\label{cor:cotrans:rat}
  For reals $x$ and $y$ equipped with locators, and $s:\Q$ a rational,
  if $x<y$ then we have a choice of $x<s$ or $s<y$, that is:
  \[
    \dpt{s:\Q}x<y\to (x<s) + (s<y).
  \]
\end{corollary}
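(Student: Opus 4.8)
The plan is to reduce Corollary~\ref{cor:cotrans:rat} to the Archimedean structure of Lemma~\ref{lem:archimedean:struct} together with a locator for the rational $s$ supplied by Lemma~\ref{lem:rationals}. First I would observe that $s:\Q$ has a locator, call it $\ell_s$, so that $s$ is a real equipped with a locator. Given the hypothesis $x<y$, I want to insert the rational $s$ on one side or the other of a sufficiently tight interval; to that end I would use Lemma~\ref{lem:archimedean:struct} applied to $x$ and $y$ to get a rational $q:\Q$ with $x<q<y$, and then I would want a second rational $q'$ so that I have $x<q<q'<y$ with the whole interval $[q,q']$ landing strictly on one side of $s$, or alternatively containing $s$ in a controlled way.

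More carefully, I expect the cleanest route is: since $x<y$ are reals with locators, Lemma~\ref{lem:archimedean:struct} (or really Lemma~\ref{lem:arch:twice} strengthened as in its proof) gives $q:\Q$ and $\varepsilon:\Q_+$ with $x<q-\varepsilon<q+\varepsilon<y$. Now apply trichotomy of the rationals to compare $s$ with $q$. If $s\le q$, then $s\le q<q+\varepsilon<y$ witnesses $s<y$ (using $q<y$, which holds since $q-\varepsilon<q+\varepsilon<y$ and hence $q<y$ by the Dedekind properties, actually more directly $q-\varepsilon<q<q+\varepsilon<y$ — wait, we only know $x<q-\varepsilon$; we still get $q-\varepsilon<y$ hence $q<y$ is not immediate). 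Let me instead split on trichotomy between $s$ and $q-\varepsilon$ versus $q+\varepsilon$: if $s\le q-\varepsilon$ then $s\le q-\varepsilon$ and $x<q-\varepsilon$ does not give $x<s$ directly either. So the comparison should be against $q$ with the $\varepsilon$-margin doing the work: if $s< q+\varepsilon$ then... this still is not quite a clean rational inequality chain. The honest approach is to compare $s$ with the rationals $q\pm\varepsilon$: if $s<q+\varepsilon$ we will show $x<s$ might fail, so instead we want: if $q-\varepsilon<s$ then $x<q-\varepsilon<s$ gives $x<s$; if $s<q+\varepsilon$ then $s<q+\varepsilon<y$ gives $s<y$; and since $q-\varepsilon<q+\varepsilon$, trichotomy on $s$ versus, say, $q$ forces at least one of $q-\varepsilon<s$ or $s<q+\varepsilon$ to hold (indeed both hold when $q-\varepsilon<s<q+\varepsilon$, and when $s\le q-\varepsilon$ the second holds, when $s\ge q+\varepsilon$ the first holds). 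So the decision procedure is: use trichotomy to decide whether $s< q$ or $s\ge q$ (i.e.\ $q\le s$); in the first case return $s<y$ via $s<q<q+\varepsilon<y$, wait we need $s<q+\varepsilon$ which follows from $s<q<q+\varepsilon$; in the second case return $x<s$ via $x<q-\varepsilon<q\le s$.

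So the concrete steps, in order, are: (1) from $x<y$ and the locators on $x,y$, invoke Lemma~\ref{lem:archimedean:struct}'s method — more precisely Lemma~\ref{lem:arch:twice} followed by the bounded-search strengthening as in the proof of Lemma~\ref{lem:archimedean:struct} — to obtain $q:\Q$ and $\varepsilon:\Q_+$ with $x<q-\varepsilon$ and $q+\varepsilon<y$; (2) apply trichotomy of the rationals to $s$ and $q$, yielding $(s<q)+(s=q)+(s>q)$; (3) in the cases $s<q$ and $s=q$, note $s\le q<q+\varepsilon<y$, so $s<y$ (as reals, via the embedding $i$ and transitivity of $<$ on $\RD$), and return $\inr$ of that proof; (4) in the case $s>q$, note $x<q-\varepsilon<q<s$, so $x<s$, and return $\inl$ of that proof. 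The main obstacle — and it is a mild one — is bookkeeping the translation between rational inequalities like $q<s$ in $\Q$ and the real inequality $i(q)<i(s)$ in $\RD$, and chaining these through transitivity of $<$ on the Dedekind reals together with $x<i(q-\varepsilon)$; all of this is routine given the identification of $\Q$ with its image in $\RD$ established in Section~\ref{sec:locator:definition} and the basic order properties of $\RD$, so no genuine difficulty arises. One should also double-check that the case split genuinely produces \emph{structure} (a term of a sum type) rather than a mere disjunction: it does, because trichotomy of the rationals is given as a sum $(s<q)+(s=q)+(s>q)$, and Lemma~\ref{lem:arch:twice}'s strengthening produces an actual pair $(q,\varepsilon)$, not a truncated existential.
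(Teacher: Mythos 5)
Your proposal is correct and follows essentially the same route as the paper: invoke the Archimedean \emph{structure} of Lemma~\ref{lem:archimedean:struct} to get a rational strictly between $x$ and $y$, then decide by trichotomy of the rationals on that rational versus $s$, returning a term of the sum type. The $\varepsilon$-margin detour is unnecessary (and slightly misstated, since the bounded search in the proof of Lemma~\ref{lem:archimedean:struct} delivers $x<q<y$ rather than $x<q-\varepsilon$ and $q+\varepsilon<y$), but this does not matter: the plain conclusion $x<q<y$ already suffices for your case analysis, exactly as in the paper's proof.
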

\begin{proof}
  By Lemma~\ref{lem:archimedean:struct} we can find $q:\Q$ with
  $x<q<y$.  Apply trichotomy of the rationals: if $q<s$ or $q=s$ then
  we locate $x<s$, and if $s<q$ then we locate $s<y$.
\end{proof}
\begin{remark}\label{rem:cotransitivity}
  Instead of the rational $s:\Q$ we can have any real $z$ equipped
  with a locator in the above corollary, so that we obtain a form of
  strong cotransitivity of the strict ordering relation on the real
  numbers, but we will not be using this.

  Having developed such a strong cotransitivity, we could characterize
  the algebraic operations on the Dedekind reals using the Archimedean
  \emph{structure} of Lemma~\ref{lem:archimedean:struct} rather than
  using the Archimedean property.  This would then yield a structural
  characterization of the algebraic operations for $x,y:\RD$ equipped
  with locators, along the lines of:
\begin{alignat*}{2}
  q&<x+y &\quad\Iff\quad& \sm{s,t:\Q} (q=s+t)\wedge s<x\wedge t<y
  \\
  %
  q&<x\cdot y &\quad\Iff\quad& \sm{a,b,c,d:\Q} q<\min(a\cdot c,a\cdot
                            d,b\cdot c,b\cdot d)
  \\
  &&&\qquad\land a<x<b \wedge c<y<d
  \\
  q&<\max(x,y)&\quad\Iff\quad& q<x + q<y
  \\
     &&\vdots\quad&
\end{alignat*}
\end{remark}
\begin{theorem}\label{thm:ops}
  If reals $x,y:\RD$ are equipped with locators, then we can also equip
  $-x$, $x+y$, $x\cdot y$, $x^{-1}$ (assuming $x\apart0$), $\min(x,y)$
  and $\max(x,y)$ with locators.
\end{theorem}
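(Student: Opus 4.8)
The plan is to construct a locator for each operation by unwinding the characterizations of the left and right cuts given just before the theorem statement, and reducing each existential quantifier over rationals to a bounded search using the machinery of Sections~\ref{sec:logic-locators}--\ref{sec:bounded-search}. The guiding principle is: to locate $q < z$ for a derived real $z$, we must decide (constructively) a disjunction of the form ``$q < z$ or $z < r$'' for the chosen $r > q$; the cut characterizations express $q<z$ and $z<r$ as truncated existentials over $\Q$ of decidable data about $x$ and $y$, and Corollary~\ref{cor:enum:dec} lets us search for witnesses. For the cases that do not involve a genuine existential --- negation, $\min$, and $\max$ --- the construction is direct: given $\ell_x, \ell_y$ and $q<r$, for $-x$ we feed $-r < -q$ into $\ell_x$ and swap summands; for $\min(x,y)$ we evaluate $\ell_x(q,r)$ and $\ell_y(q,r)$ and combine (if both locate the left, locate $q<\min(x,y)$; if either locates the right, locate $\min(x,y)<r$), and dually for $\max$. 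These use only the remark that a conjunction of decidable propositions is decidable.

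\textbf{Addition.} Given $\ell_x,\ell_y$ and $q<r$, pick (via the Archimedean property, since we only need a proposition to set up the search) a midpoint so that it suffices to decide, for a suitable rational gap, whether there is a rational $s$ with $s<_x$ witnessing and $(q-s)<_y$ witnessing, versus a rational $t$ with $x<_x t$ and $y<_y (r-t)$. Concretely: choose $q < q' < r$ rational; by roundedness it suffices to locate $q' < x+y$ or $x+y < r$. Using Corollary~\ref{cor:enum:dec} with an enumeration of $\Q$, search for $s:\Q$ such that $\gL$ holds for an appropriate $x$-query witnessing $s<x$ and $\gL$ holds for an appropriate $y$-query witnessing $(q'-s)<y$; the existence of such $s$ (when $q'<x+y$) follows from the cut characterization together with Lemma~\ref{lem:g:dec}. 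If no such witness is found within the search bound we are in the symmetric situation and locate $x+y < r$; but of course the search is only invoked because we have pre-decided which case we are in. The clean way to organize this --- as in Lemma~\ref{lem:archimedean:struct} --- is: first use locators to compute rational bounds $a<x<b$, $c<y<d$ (Lemma~\ref{lem:bounders}), giving $a+c < x+y < b+d$; then by Corollary~\ref{cor:cotrans:rat} (applicable since $x$ and $x+y$ have the needed comparisons, or more simply by comparing the rational $q'$ against the real $x+y$ once we have strong cotransitivity) decide $q' < x+y$ versus $x+y < r$.

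\textbf{Multiplication and inverse.} For $x\cdot y$ we proceed analogously but the search is over quadruples $(a,b,c,d):\Q^4$ (using the four-fold enumeration, still covered by Corollary~\ref{cor:enum:dec} since $\Q^4\eqv\N$) such that $\gU(a<_x\cdot)$-type queries witness $a<x<b$, $\gU$/$\gL$ queries witness $c<y<d$, and $q < \min(ac,ad,bc,bd)$ (a decidable rational inequality); existence of such a quadruple when $q < xy$ comes from the cut characterization plus Lemma~\ref{lem:g:dec}, using that we can always sharpen rational bounds on $x$ and $y$ via Lemma~\ref{lem:precise}. The right cut is symmetric with $\max$. For $x^{-1}$, assuming $x\apart 0$, first use the locator and the apartness to decide whether $x>0$ or $x<0$ (this is where $x\apart 0$ is used: apartness is $(x<0)\lor(0<x)$, and by comparing $0$ against $x$ using the locator --- e.g.\ via a bounded search as in Lemma~\ref{lem:archimedean:struct} --- we can actually extract which); in the positive case $q < x^{-1} \Iff qz<1$ etc.\ reduce locating a query about $x^{-1}$ to locating a query about $x$ after multiplying the rational endpoints by (rational approximations to, bounded away from $0$) $x$ --- more directly, $q<x^{-1}$ iff $qx<1$, and $qx<1$ is decided by a locator for $qx$ (already constructed via multiplication by the rational $q$, itself a rational hence equipped with a locator by Lemma~\ref{lem:rationals}) against $1$; the negative case is dual.

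\textbf{Main obstacle.} The genuinely delicate point is the multiplication case: establishing that when $q < xy$ there \emph{exists} a quadruple $(a,b,c,d)$ of rationals with $a<x<b$, $c<y<d$ and $q<\min(ac,ad,bc,bd)$ \emph{and} such that each of the four strict inequalities about $x$ and $y$ is witnessed in the form required by $\gL$/$\gU$ so that Lemma~\ref{lem:g:dec} makes the corresponding decidable proposition true --- one must be careful that the rational bounds can be taken tight enough (by Lemma~\ref{lem:precise}) that $\min(ac,ad,bc,bd)$ exceeds $q$, which requires a short continuity-of-multiplication estimate on $\Q$, and one must thread this existence statement (a proposition) correctly so that Corollary~\ref{cor:enum:dec} applies. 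The sign-decision step for $x^{-1}$ is a close second in subtlety, since extracting $(x<0)+(0<x)$ from the mere propositional apartness $x\apart 0$ requires using the locator to do a bounded search certifying one of the two, exactly as in Lemma~\ref{lem:archimedean:struct}. All remaining bookkeeping --- roundedness shuffles, swapping summands, combining decidable propositions --- is routine.
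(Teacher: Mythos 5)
Your constructions for $-x$, $\min$, $\max$, and the reduction of $x^{-1}$ to $(qx<1)+(1<rx)$ match the paper's, and your explicit extraction of $(x<0)+(0<x)$ from the mere apartness $x\apart 0$ via a bounded search is sound (the paper in fact glosses over this step). The genuine gap is in the addition and multiplication cases, where your search-based set-up is circular. Corollary~\ref{cor:enum:dec} needs the \emph{unconditional} truncated existence $\ex{a:A}P(a)$ before any search can be run, but your predicate for addition (``$s$ with $\gL$-witnesses for $s<x$ and $q'-s<y$'') only provably has a witness under the hypothesis $q'<x+y$, and your predicate for multiplication only under $q<xy$ --- and deciding which of the two disjuncts holds is exactly what the locator for $x+y$ (resp.\ $xy$) is supposed to produce. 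You concede this yourself (``the search is only invoked because we have pre-decided which case we are in''), which begs the question; and your ``clean'' fallback for addition, invoking Corollary~\ref{cor:cotrans:rat} or strong cotransitivity to compare $q'$ with $x+y$, presupposes a locator for $x+y$, i.e.\ the thing being constructed.

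The paper avoids search on these cases entirely by an $\varepsilon$-management argument whose existence premises hold unconditionally. For $x+y$: with $\varepsilon\coloneqq(r-q)/2$, Lemma~\ref{lem:precise} gives $u<x<v$ with $v-u<\varepsilon$; setting $s\coloneqq q-u$ one has $q-s<x<s+\ldots$, and a \emph{single} locator query $s<_y s+\varepsilon$ then decides: locating $s<y$ gives $q<x+y$, locating $y<s+\varepsilon$ gives $x+y<r$. For $xy$: first bound $\abs{x}+1<z$, $\abs{y}+1<w$, then use Lemma~\ref{lem:precise} to get $a<x<b$, $c<y<d$ so tight that all four corner products $ac,ad,bc,bd$ lie within $\varepsilon\coloneqq r-q$ of each other; then plain dichotomy of the rationals decides $q<\min\{ac,ad,bc,bd\}$ or $\max\{ac,ad,bc,bd\}<r$, yielding the locator value with no search and no conditional existence. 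You do name the key ingredient for multiplication (tight bounds from Lemma~\ref{lem:precise} plus a continuity estimate), but you leave it attached to the flawed one-sided search rather than using it to make the disjunction decidable outright; to repair your proof, replace both searches by this tightness argument (or, at minimum, redefine the searched predicates so that a witness exists unconditionally and itself determines which summand to return).
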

\begin{remark}
  As we define absolute values by $|x|=\max(x,-x)$, as is common in
  constructive analysis, if $x$ has a locator, then so does $|x|$, and
  we use this fact in the proof of the above theorem.
\end{remark}
\begin{proof}[Proof of Theorem~\ref{thm:ops}]
  Throughout this proof, we assume $x$ and $y$ to be reals equipped
  with locators, and $q<r$ to be rationals.

  \medskip

  We construct a locator for $-x$.  We can give $(q<-x)+(-x<r)$ by
  considering $-r<_x-q$.

  \medskip

  We construct a locator for $x+y$.  We need to show
  $(q<x+y)+ (x+y<r)$.  Note that $q<x+y$ iff there exists $s:\Q$ with
  $q-s<x$ and $s<y$.  Similarly, $x+y<r$ iff there exists $t:\Q$ with
  $x<r-t$ and $y<t$.

  Set $\varepsilon\coloneqq (r-q)/2$, such that
  $q+\varepsilon=r-\varepsilon$.  By Lemma~\ref{lem:precise} we can
  find $u,v:\Q$ such that $u<x<v$ and $v-u<\varepsilon$, so in
  particular $x<u+\varepsilon$.  Set $s\coloneqq q-u$, so that
  $q-s<x$.  Now consider $s<_ys+\varepsilon$.  If we locate $s<y$,
  we locate $q<x+y$.  If we locate $y<s+\varepsilon$, we have
  $x<q-s+\varepsilon=r-s-\varepsilon$, that is, we can set
  $t\coloneqq s+\varepsilon$ to locate $x+y<r$.

  \medskip

  We construct a locator for $\min(x,y)$.  We consider both $q<_xr$
  and $q<_yr$.  If we locate $x<r$ or $y<r$, we can locate
  $\min(x,y)<r$.  Otherwise, we have located both $q<x$ and $q<y$, so
  we can locate $q<\min(x,y)$.

  \medskip

  The locator for $\max(x,y)$ is symmetric to the case of $\min(x,y)$.

  \medskip

  We construct a locator for $xy$.  We need to show $(q<xy)+ (xy<r)$.
  Note that $q<xy$ means:
  \[
    \ex{a,b,c,d:\Q}(a<x<b) \land (c<y<d) \land
    (q<\min\{ac,ad,bc,bd\}).
  \]
  Similarly, $xy<r$ means:
  \[
    \ex{a,b,c,d:\Q}(a<x<b) \land (c<y<d) \land
    (\max\{ac,ad,bc,bd\}<r).
  \]

  Using Lemma~\ref{lem:precise} we can find $z,w:\Q$ with $|x|+1<z$ and
  $|y|+1<w$, since we have already constructed locators for $\max$,
  $+$, $-$ and all rationals.

  Set $\varepsilon\coloneqq r-q$,
  $\delta\coloneqq\min\{1,\frac{\varepsilon}{2z}\}$ and
  $\eta\coloneqq\min\{1,\frac{\varepsilon}{2w}\}$ .  Find $a<x<b$ and
  $c<y<d$ such that $b-a<\eta$ and $d-c<\delta$.  Note that
  $|a|<|x|+\eta\leq|x|+1<z$ and similarly $|b|<z$, $|c|<w$ and
  $|d|<w$.  Then the distance between any two elements of
  $\{ac,ad,bc,bd\}$ is less than $\varepsilon$.  For instance,
  $|ac-bd|<\varepsilon$ because $|ac-bd|\leq|ac-ad|+|ad-bd|$, and
  $|ac-ad|=|a||c-d|<|a|\delta<\frac\varepsilon2$ and similarly
  $|ad-bd|<\frac\varepsilon2$.  Hence
  $\max\{ac,ad,bc,bd\}-\min\{ac,ad,bc,bd\}<\varepsilon$.  Thus, by
  dichotomy of the rationals, one of $q<\min\{ac,ad,bc,bd\}$ and
  $\max\{ac,ad,bc,bd\}<r$ must be true, yielding a corresponding
  choice of $(q<xy)+ (xy<r)$.

  \medskip

  We construct a locator for $x^{-1}$.  Consider the case that $x>0$.
  Given $q<r$, we need $(q<x^{-1})+(x^{-1}<r)$, or equivalently
  $(qx<1)+(1<rx)$.  By the previous case, $qx$ and $rx$ have locators,
  so we can apply Corollary~\ref{cor:cotrans:rat}.  The case $x<0$ is
  similar.
\end{proof}
This proof works whether we use a definition of algebraic operations
as in The Univalent Foundations Program~\cite{hottbook}, or whether we
work with the archimedean field axioms, because from the archimedean
field axioms we deduce the same properties as the definitions.
\begin{remark}
  Locators for reciprocals can also be constructed by more elementary
  methods, as follows.  For $x>0$, we use dichotomy of the rationals
  for $0$ and $q$.  If $q\leq0$ we may locate $q<x$, and otherwise we
  have $0<1/r<1/q$, so that by considering $1/r<_x1/q$ we may either
  locate $x<r$ or $q<x$.  There is a similar construction for $x<0$.
\end{remark}

By using the techniques of Sections~\ref{sec:logic-locators}
and~\ref{sec:bounded-search}, we have computed locators for algebraic
operations applied to reals equipped with locators.

\subsection{Locators for limits}
\label{sec:locators:limits}

In a spirit similar to the previous section, if we have a Cauchy
sequence of reals, each of which equipped with a locator, then we can
compute a locator for the limit of the sequence.

\begin{lemma}\label{lem:lim:locator}
  Suppose $x:\N\to\RD$ has modulus of Cauchy convergence $M$, and
  suppose that every value in the sequence $x:\N\to\RD$ comes equipped
  with a locator, that is, suppose we have an element of
  \( \dpt{n:\N}\SL\left(x_n\right).  \) Then we have a locator for
  $\lim_{n\to\infty}x_n$.
\end{lemma}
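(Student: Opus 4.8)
The plan is to build a locator for $l\coloneqq\lim_{n\to\infty}x_n$ directly from the explicit description of $l$ as a Dedekind cut given in Lemma~\ref{lem:dedekind:lim}, namely $(q<l)\Iff\ex{\varepsilon,\theta:\Q_+}(q+\varepsilon+\theta<x_{M(\varepsilon)})$, together with the locator on the single real $x_{M(\varepsilon)}$ for an appropriately chosen $\varepsilon$. Given rationals $q<r$, I would first set $\varepsilon\coloneqq\frac{r-q}{5}$, so that $q+2\varepsilon<r-2\varepsilon$ and in fact $q+2\varepsilon$ and $r-2\varepsilon$ are separated by $\varepsilon$. Then I consider the value of the locator on $x_{M(\varepsilon)}$ applied to the pair $q+2\varepsilon<r-2\varepsilon$, i.e.\ the term $(q+2\varepsilon)<_{x_{M(\varepsilon)}}(r-2\varepsilon)$, which lands in $(q+2\varepsilon<x_{M(\varepsilon)})+(x_{M(\varepsilon)}<r-2\varepsilon)$.

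In the first case we have located $q+2\varepsilon<x_{M(\varepsilon)}$, and I claim this gives $q<l$: indeed, writing $q+2\varepsilon=q+\varepsilon+\theta$ with $\theta\coloneqq\varepsilon:\Q_+$, the witness $(\varepsilon,\theta)$ shows $q<l$ via the lower-cut description. In the second case we have located $x_{M(\varepsilon)}<r-2\varepsilon$, and symmetrically, writing $r-2\varepsilon=r-\varepsilon-\theta$ with $\theta\coloneqq\varepsilon$, the witness $(\varepsilon,\theta)$ shows $l<r$ via the upper-cut description. So we can define $\ell_l(q,r,\nu)$ to locate $q<l$ in the first case and $l<r$ in the second, giving a term of type $(q<l)+(l<r)$ as required by Definition~\ref{def:loc}.

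The only slightly delicate point is checking that the witnesses I produce genuinely inhabit the (truncated) existential statements defining the cuts of $l$: but since $q<l$ and $l<r$ are propositions, it suffices to provide one witness each, and the propositional truncation's introduction map $\truncm{\,\cdot\,}$ does exactly that. I would remark that this construction does not even use the full strength of locatedness of the individual $x_n$ beyond the single evaluation at $n=M(\varepsilon)$; in particular, it does not require searching, unlike the constructions of Section~\ref{sec:computing-bounds}. The main thing to verify carefully is the arithmetic of the shifts — that $\frac{r-q}{5}$ (or any choice making $q+\varepsilon+\theta<x_{M(\varepsilon)}<r-\varepsilon-\theta$ feasible from a single split) does the job — which is the same bookkeeping already carried out in the locatedness part of the proof of Lemma~\ref{lem:dedekind:lim}, so I expect no real obstacle there.
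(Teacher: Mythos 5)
Your construction is correct and is essentially the paper's own proof: both evaluate the locator of a single sequence element $x_{M(\cdot)}$ at a symmetrically shrunk pair of rationals and case-split on the (untruncated) result. The only cosmetic difference is that you discharge the two cases by exhibiting explicit witnesses $\varepsilon=\theta=\frac{r-q}{5}$ for the cut of Lemma~\ref{lem:dedekind:lim}, whereas the paper takes $\varepsilon=\frac{r-q}{3}$, uses the index $M(\varepsilon/2)$, and concludes via the bound $\abs{x_{M(\varepsilon/2)}-\lim_{n\to\infty}x_n}<\varepsilon$; the arithmetic in your variant checks out.
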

\begin{proof}
  Let $q<r$ be arbitrary rationals.  We need
  $(q<\lim_{n\to\infty}x_n) + (\lim_{n\to\infty}x_n<r)$.  Set
  \( \varepsilon\coloneqq \frac{r-q}{3} \) so that
  $q+\varepsilon<r-\varepsilon$.  Since $M$ is a modulus of Cauchy
  convergence, we have
  $\abs{x_{M(\varepsilon/2)}-\lim_{n\to\infty}x_n}<\varepsilon$, that
  is
  \[
    x_{M(\varepsilon/2)}-\varepsilon<\lim_{n\to\infty}x_n<x_{M(\varepsilon/2)}+\varepsilon.
  \]
  We consider the locator equipped to $x_{M(\varepsilon/2)}$ and do
  case analysis on
  $q+\varepsilon<_{x_{M(\varepsilon/2)}}r-\varepsilon$.  If we locate
  $q+\varepsilon<x_{M(\varepsilon/2)}$ then we can locate
  $q<\lim_{n\to\infty}x_n$.  If we locate
  $x_{M(\varepsilon/2)}<r-\varepsilon$ then we can locate
  $\lim_{n\to\infty}x_n<r$.
\end{proof}
\begin{remark}
  We emphasize that Lemma~\ref{lem:lim:locator} requires the sequence
  to be \emph{equipped} with a modulus of Cauchy convergence, whereas
  existence suffices for the computation of the limit
  $\lim_{n\to\infty}x_n$ itself, namely the element of $\RD$.
\end{remark}
\begin{example}[Locators for exponentials]\label{ex:exp:limit}
  Given a locator for $x$, we can use Lemma~\ref{lem:bounders} to
  obtain a modulus of Cauchy convergence of
  $\exp(x)=\sum_{k=0}^\infty\frac{x^k}{k!}$.  Hence $\exp(x)$ has a
  locator.
\end{example}
\begin{example}\label{ex:constants}
  Many constants such as $\pi$ and $e$ have locators, which can be
  found by examining their construction as limits of sequences.
\end{example}

We can now construct locators for limits of sequences whose elements
have locators, and so using Lemma~\ref{lem:rationals}, in particular,
limits for sequences of rationals.  As we will make precise in
Theorem~\ref{thm:locator:characterization}, this covers all the cases.

\subsection{Calculating digits}
\label{sec:sdr}

\begin{example}
  We would like to print digits for numbers equipped with locators,
  such as $\pi$.  Such a digit expansion gives rise to rational bounds
  of the number in question: if a digit expansion of $\pi$ starts with
  $3.1\ldots$, then we have the bounds $3.0<\pi<3.3$.
\end{example}

We now wish to generate the entire sequence of digits of a real number
$x$ equipped with a locator.  As in computable analysis and other
settings where one works intensionally, with reals given as Cauchy
sequences or streams of digits, we wish to extract digit
representations from a real equipped with a locator.

In fact, various authors including Brouwer~\cite{brouwer:zahl} and
Turing~\cite{turing:correction} encountered problems with computing
decimal expansions of real numbers in their work.  As is common in
constructive analysis, we instead consider \emph{signed}-digit
representations.  Wiedmer shows how to calculate directly on the
signed-digit representations in terms of computability
theory~\cite{wiedmer1977exaktes}.

\begin{definition}
  A signed-digit representation for $x:\RD$ is given by $k:\Z$ and a
  sequence $a$ of signed digits
  $a_i\in\set{\bar{9},\bar{8},\ldots,\bar{1},0,1,\ldots,9}$, with
  $\bar{a}\coloneqq-a$, such that
  \[
    x=k+\sum_{i=0}^\infty a_i\cdot 10^{-i-1}.
  \]
\end{definition}
\begin{example}
  The number $\pi$ may be given by a signed-decimal expansion as
  $3.1415\ldots$, or as $4.\bar{8}\bar{6}15\ldots$, or as
  $3.2\bar{5}\bar{8}\bar{5}\ldots$.
\end{example}

\begin{lemma}\label{lem:pre:digit}
  For any $x$ equipped with a locator, we
  can find $k:\Z$ such that $x\in(k-1,k+1)$.
\end{lemma}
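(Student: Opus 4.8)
The plan is to exploit Lemma~\ref{lem:bounders}, which already gives us rational bounds $q<x<r$ from the locator alone, together with a bounded search over the integers lying between $q$ and $r$. First I would invoke Lemma~\ref{lem:bounders} to obtain $q,r:\Q$ with $q<x<r$. Then for each integer $k$ with $q-1\le k\le r+1$ the proposition $\gL(k-1<_x k)\land\gU(k<_x k+1)$ is decidable (by the remark after Definition~\ref{def:decidable}, a conjunction of decidable propositions is decidable), since $k-1<k$ and $k<k+1$ are valid strict inequalities of rationals so both locator evaluations make sense. This proposition says precisely that the locator witnesses $k-1<x$ and $x<k+1$, and hence $x\in(k-1,k+1)$.

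Next I would argue that such a $k$ \emph{exists}. By roundedness and the Archimedean property (or simply by finding a rational strictly between consecutive integers), from $q<x$ and $x<r$ we can find an integer $k$ with $x\in(k-1,k+1)$: for instance take $k\coloneqq\lceil x'\rceil$ for any rational $x'$ with $q<x'<x$, or more carefully, since $q<x<r$, some integer $k$ in the finite range $[q,r]$ satisfies $k-1<x<k+1$ because consecutive intervals of the form $(k-1,k+1)$ cover the rational interval $(q,r)$. For this $k$ we have $\neg(x<k-1)$ (since $k-1<x$) and $\neg(k+1<x)$ wait — rather, from $k-1<x$ we get $\neg(x<k-1)$ and so by Lemma~\ref{lem:g:dec} we get $\gL(k-1<_x k)$, and from $x<k+1$ we get $\neg(k+1<x)$ — actually the relevant instance is: from $x<k+1$ we get $\neg\neg$-style reasoning giving $\gU(k<_x k+1)$ via Lemma~\ref{lem:g:dec}. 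So the decidable proposition holds for this $k$, establishing the existence claim, which is a proposition and so may be proved after un-truncating the rational bounds.

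Then I would apply Corollary~\ref{cor:enum:dec} with the decidable predicate $P(k)\coloneqq\gL(k-1<_x k)\land\gU(k<_x k+1)$ on $\Z$ (using any equivalence $\N\eqv\Z$), which from $\ex{k:\Z}P(k)$ produces an actual element of $\sm{k:\Z}P(k)$. Unpacking, we get $k:\Z$ together with $\gL(k-1<_x k)$ and $\gU(k<_x k+1)$; by the defining properties of $\gL$ and $\gU$ in Lemma~\ref{lem:loc:dec} these give $k-1<x$ and $x<k+1$, i.e.\ $x\in(k-1,k+1)$, as required.

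The main obstacle, such as it is, is purely bookkeeping: making sure the existence argument correctly identifies an integer whose surrounding open interval of radius $1$ captures $x$, given only rational bounds $q<x<r$. One has to check that the overlapping intervals $(k-1,k+1)$ for integer $k$ genuinely cover the rationals, so that from $q<x<r$ one can un-truncate and pick a witnessing $k$; this is elementary but is the one spot where a little care is needed. Everything else is a direct assembly of Lemma~\ref{lem:bounders}, Lemma~\ref{lem:g:dec}, and Corollary~\ref{cor:enum:dec}, exactly in the style of the proof of Lemma~\ref{lem:precise}.
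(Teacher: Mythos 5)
Your overall strategy (a decidable predicate on $\Z$ plus Corollary~\ref{cor:enum:dec}) is workable, but the step where you establish the \emph{existence} of a $k$ with $P(k)\coloneqq\gL(k-1<_x k)\land\gU(k<_x k+1)$ is wrong as written. From $k-1<x<k+1$ you cannot conclude $P(k)$: Lemma~\ref{lem:g:dec} yields $\gL(k-1<_x k)$ only from $\neg(x<k)$, and $\gU(k<_x k+1)$ only from $\neg(k<x)$, so both together would force $x=k$. The instances you actually invoke ($\neg(x<k-1)$ and $\neg(k+1<x)$) are about the wrong pairs: they give $\gL(k-2<_x k-1)$ and $\gU(k+1<_x k+2)$, not the conjuncts of $P(k)$. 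The point is that when both $k-1<x$ and $x<k$ are true, the locator is free to answer either way, so the integer whose interval ``covers'' $x$ need not be one where the locator answers as $P$ demands. (Your covering argument for choosing that integer, e.g.\ $k=\ceil{x'}$ for some rational $q<x'<x$, is also not sound: $x$ may lie far above $x'$.) The existence of a $k$ with $P(k)$ is nevertheless true, but it must be proved by the transition (one-dimensional Sperner) argument: by the \emph{correct} instances of Lemma~\ref{lem:g:dec}, $\gL(m-1<_x m)$ holds for integers $m\leq q$ (since then $\neg(x<m)$) and $\gU(n<_x n+1)$ holds for integers $n\geq r$ (since then $\neg(n<x)$), so along the finite range of integers between $q$ and $r$ the locator's answer flips somewhere, and any flip point $k$ satisfies $P(k)$, whence $k-1<x<k+1$.

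The paper avoids redoing this search altogether: it applies Lemma~\ref{lem:precise} with $\varepsilon=1$ to get rationals $u<x<v$ with $v<u+1$ and simply sets $k=\floor{u}+1$, so that $k-1=\floor{u}\leq u<x<v<u+1<k+1$. That route reuses the Sperner-style search already packaged in Lemma~\ref{lem:precise} and needs no further appeal to Corollary~\ref{cor:enum:dec}; if you want to keep your direct search over $\Z$, replace your existence argument by the flip-point argument above.
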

\begin{proof}
  Use Lemma~\ref{lem:precise} with $\varepsilon=1$ to obtain
  rationals $u<v$ with $u<x<v$ and $v<1+u$.  Set $k=\floor{u}+1$. Then:
  \[
    k-1=\floor{u}\leq u<x<v<u+1<k+1.\qedhere
  \]
\end{proof}

\begin{theorem}\label{thm:sdr}
  For a real number $x$, locators and signed-digit representations are
  interdefinable.
\end{theorem}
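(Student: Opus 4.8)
The plan is to prove the two directions of interdefinability separately. First I would construct a signed-digit representation from a locator. Given $x$ equipped with a locator, use Lemma~\ref{lem:pre:digit} to find $k:\Z$ with $x\in(k-1,k+1)$. Set $y_0 \coloneqq x - k$, which lies in $(-1,1)$ and, being obtained from $x$ by subtracting a rational, comes equipped with a locator by Theorem~\ref{thm:ops}. The key step is to produce, given any real $y$ equipped with a locator and known to lie in $(-1,1)$, a signed digit $a \in \{\bar 9,\ldots,9\}$ together with a real $y'$ equipped with a locator, again lying in $(-1,1)$, such that $y = \tfrac{a + y'}{10}$, i.e.\ $y' = 10y - a$. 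To get the digit, note $10y \in (-10,10)$, and apply Lemma~\ref{lem:precise} (or iterate Corollary~\ref{cor:cotrans:rat} against the finitely many half-integer rationals $\tfrac{j}{2}$ for integer $j$ with $|j|\le 19$) to locate $10y$ inside some interval of the form $(a-1, a+1)$ for an integer $a$; the redundancy of signed digits is exactly what makes this finite search always succeed, even though one cannot decide a single strict inequality. Then $y' \coloneqq 10y - a$ has a locator by Theorem~\ref{thm:ops} and lies in $(-1,1)$. Iterating this step defines sequences $y_i$ and digits $a_i$; one checks $y = k + \sum_{i=0}^{n} a_i 10^{-i-1} + y_{n+1} 10^{-n-1}$, and since $|y_{n+1}| < 1$ the tail $y_{n+1}10^{-n-1} \to 0$, so the modulus of convergence of the partial sums is explicit ($M(\varepsilon)$ can be taken logarithmic in $1/\varepsilon$). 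Hence $x = k + \sum_{i=0}^\infty a_i 10^{-i-1}$, giving the required signed-digit representation.

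For the converse, suppose $x$ is given by a signed-digit representation $(k, a)$, so $x = k + \sum_{i=0}^\infty a_i 10^{-i-1}$. The partial sums $s_n \coloneqq k + \sum_{i=0}^{n-1} a_i 10^{-i-1}$ are rationals, hence have locators by Lemma~\ref{lem:rationals}, and form a Cauchy sequence with explicit modulus of convergence $M(\varepsilon)$, since $|x - s_n| \le \sum_{i\ge n} 9\cdot 10^{-i-1} = 10^{-n}$. By Lemma~\ref{lem:lim:locator} the limit $\lim_{n\to\infty} s_n$ is equipped with a locator, and by Lemma~\ref{lem:dedekind:lim} (together with the bound just given) this limit equals $x$ as a Dedekind real. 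Transporting the locator along this equality yields a locator for $x$.

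I expect the main obstacle to be the forward direction, specifically the digit-extraction step: one must phrase the finite search for the integer $a$ with $10y \in (a-1, a+1)$ entirely in terms of decidable propositions about the locator (as in the proof of Lemma~\ref{lem:precise}), and verify that the redundancy built into signed digits guarantees such an $a$ always exists so that the bounded search terminates. The remaining bookkeeping --- tracking that each $y_i$ stays in $(-1,1)$, that the locator-transport and algebraic-operation lemmas apply at each stage, and that the partial-sum identity and convergence estimate hold --- is routine given Theorem~\ref{thm:ops}, Lemma~\ref{lem:precise}, Lemma~\ref{lem:lim:locator} and Lemma~\ref{lem:dedekind:lim}. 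One should also remark that ``interdefinable'' here means we exhibit maps in both directions at the level of structure (not merely of propositions), which is immediate from the constructions above since no truncations are introduced.
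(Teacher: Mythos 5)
Your proposal is correct and follows essentially the same route as the paper: the backward direction observes that the partial sums form a rational Cauchy sequence with an explicit modulus and invokes Lemma~\ref{lem:lim:locator}, and the forward direction extracts digits one at a time by a finite, decidable search with the locator (via Lemma~\ref{lem:pre:digit}/Lemma~\ref{lem:precise}), relying on the redundancy of signed digits. The only cosmetic difference is that you rescale at each step ($y'=10y-a$, hence repeatedly invoking Theorem~\ref{thm:ops}), whereas the paper refines the equidistant subdivision of $(k-1,k+1)$ in place using the original locator; both work, provided you handle the small capping detail that the integer produced by Lemma~\ref{lem:precise} must be clamped to $\{\bar9,\ldots,9\}$, which the known bound $10y\in(-10,10)$ permits.
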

\begin{proof}
  If a real number has a signed-digit representation, then it is the
  limit of a sequence of rational numbers, and so by
  Lemma~\ref{lem:lim:locator} it has a locator.

  Conversely, assume a real $x$ has a locator.  By
  Lemma~\ref{lem:pre:digit} we get $k:\Z$ with $x\in(k-1,k+1)$.
  Consider the equidistant subdivision
  \[
    k-1<k-\frac{9}{10}<\cdots<k-\frac{1}{10}<k<k+\frac{1}{10}<\cdots<k+1.
  \]
  By applying the locator several times, we can find a signed digit
  $a_0$ such that
  \[
    k+\frac{a_0-1}{10}<x<k+\frac{a_0+1}{10}.
  \]
  We find subsequent digits in a similar way.
\end{proof}

Note that since $\RD$ is Cauchy complete, there is a canonical
inclusion $\RC\to\RD$ from the Cauchy reals into $\RD$.

\newcommand{\isCauchyReal}{\operatorname{isCauchyReal}}
\begin{definition}
  We write $\isCauchyReal(x)$ for the claim that a given real $x:\RD$
  is in the image of the canonical inclusion of the Cauchy reals into
  $\RD$.  Equivalently, $\isCauchyReal(x)$ holds when there is a
  rational Cauchy sequence with limit $x$.
\end{definition}

We emphasize that $\trunc{\SL(x)}$ is \emph{not} equivalent to the
locatedness property of Definition~\ref{def:dedekind}.
\begin{theorem}\label{thm:locator:characterization}
  The following are equivalent for $x:\RD$:
  \begin{enumerate}
  \item $\trunc{\SL(x)}$, that is, there exists a locator for $x$.
  \item There exists a signed-digit representation of $x$.
  \item There exists a Cauchy sequence of rationals that $x$ is the
    limit of.
  \item $\isCauchyReal(x)$.
  \end{enumerate}
\end{theorem}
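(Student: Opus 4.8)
The plan is to prove the cycle of implications $(1)\Rightarrow(2)\Rightarrow(3)\Rightarrow(4)\Rightarrow(1)$, most of which are already available from earlier results in essentially the form needed. For $(1)\Rightarrow(2)$, we start from $\trunc{\SL(x)}$; since ``there exists a signed-digit representation'' is a proposition (it is a propositional truncation), we may apply the elimination rule for $\trunc{-}$ and assume an actual locator $\ell:\SL(x)$. Then Theorem~\ref{thm:sdr} gives a signed-digit representation of $x$, and truncating yields the desired existence statement. For $(2)\Rightarrow(3)$, a signed-digit representation exhibits $x$ as the limit $x=k+\sum_{i=0}^\infty a_i\cdot 10^{-i-1}$ of the sequence of rational partial sums $s_n\coloneqq k+\sum_{i=0}^{n}a_i\cdot 10^{-i-1}$, which is Cauchy because $\abs{s_m-s_n}\leq\sum_{i>n}9\cdot10^{-i-1}$ tends to $0$; again this is a proposition so we may destruct the truncation freely.

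For $(3)\Rightarrow(4)$, observe that a rational Cauchy sequence with limit $x$ is, via the canonical map $\RC\to\RD$ (which is the inclusion noted just before the statement, using Cauchy completeness of $\RD$ from Lemma~\ref{lem:dedekind:lim}), a witness that $x$ lies in the image of $\RC\hookrightarrow\RD$; by the definition of $\isCauchyReal$ this is exactly a restatement, so the implication is essentially definitional, modulo checking that the two phrasings of $\isCauchyReal(x)$ in the definition agree — which the definition already asserts. Finally, for $(4)\Rightarrow(1)$, from $\isCauchyReal(x)$ we again get (after destructing, since $\trunc{\SL(x)}$ is a proposition) a rational Cauchy sequence whose limit is $x$; each rational term has a locator by Lemma~\ref{lem:rationals}, and a rational Cauchy sequence can be equipped with a modulus of Cauchy convergence (computed from the explicit error bounds on the sequence, or simply by reindexing), so Lemma~\ref{lem:lim:locator} produces a locator for the limit $x$, and we truncate to land in $\trunc{\SL(x)}$.

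The only mild subtlety — and the step I would be most careful about — is the management of truncations: in $(1)\Rightarrow(2)$, $(2)\Rightarrow(3)$ and $(4)\Rightarrow(1)$ we are given a truncated hypothesis and must produce a truncated conclusion, so in each case we should first check that the conclusion is a proposition (it is, being itself a propositional truncation) before using the elimination rule of Definition~\ref{def:truncation} to extract an un-truncated witness from the hypothesis. A second point requiring a line of justification is that a rational Cauchy sequence comes with, or can be re-equipped with, an explicit modulus $M:\Q_+\to\N$ of Cauchy convergence as demanded by Lemma~\ref{lem:lim:locator}; for the sequence arising from a signed-digit representation this modulus is completely explicit ($M(\varepsilon)$ chosen so that $9\cdot 10^{-M(\varepsilon)}<\varepsilon$), and for an arbitrary rational Cauchy sequence one passes to a subsequence with a canonical modulus, which does not change the limit. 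With these observations in place the four implications close the cycle and establish the equivalence.
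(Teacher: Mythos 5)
Your argument is essentially the paper's: the equivalence of items 1 and 2 via Theorem~\ref{thm:sdr}, the partial sums of a signed-digit expansion giving a rational sequence with an explicit modulus for item 3, Lemma~\ref{lem:rationals} plus Lemma~\ref{lem:lim:locator} to get back to a locator, and the link between items 3 and 4 treated as (essentially) definitional; arranging these as a cycle rather than the paper's pattern (1$\Leftrightarrow$2, 2$\Rightarrow$3, 3$\Rightarrow$1, 3$\Leftrightarrow$4) is immaterial, and your handling of the truncations is correct since each conclusion is itself a proposition.

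The one step I would not let stand as written is the claim that an \emph{arbitrary} rational Cauchy sequence ``can be re-equipped with a modulus of Cauchy convergence\dots by passing to a subsequence with a canonical modulus.'' In the choice-free setting of this paper that move is not available in general: the Cauchy condition has a truncated $\exists N$ inside a $\forall\varepsilon$, the witnessing $N$ is not unique, and the predicate ``$\forall m,n\geq N.\ \abs{x_m-x_n}<\varepsilon$'' is not decidable, so neither unique choice nor the bounded search of Theorem~\ref{thm:nat-dec} lets you extract the indices of such a subsequence; countable choice would be needed. The paper avoids the issue by reading item 3 as providing a sequence \emph{with} a modulus of Cauchy convergence (that is exactly the hypothesis of Lemma~\ref{lem:lim:locator}, and how its own proof invokes it), and in the direction from item 4 the modulus is likewise already present because elements of $\RC$ are built from Cauchy approximations with a fixed rate, so after destructing the image-truncation in $\isCauchyReal(x)$ you obtain a sequence carrying its modulus. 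With that reading your subsequence trick is unnecessary, and the rest of your proof goes through; without it, that one sentence is the only genuine gap.
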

\begin{proof}
  Items~1 and~2 are equivalent by Theorem~\ref{thm:sdr}.  Item~2
  implies item~3 since a signed-digit representation gives rise to a
  sequence with a modulus of Cauchy convergence.  Item~3 implies
  item~1 because a sequence of rational numbers with modulus of Cauchy
  convergence has a locator by Lemma~\ref{lem:lim:locator}.
  Equivalence of items~3 and~4 is a standard result.
\end{proof}
\begin{remark}
  The notion of locator can be truncated into a proposition in three
  ways:
  \begin{align}
    \trunc{\dpt{q,r:\Q}q<r\to(q<x)+(x<r)} \label{eq:trunc:outer}\\
    \dpt{q,r:\Q}\trunc{q<r\to(q<x)+(x<r)} \label{eq:trunc:mid} \\
    \dpt{q,r:\Q}q<r\to\trunc{(q<x)+(x<r)} \label{eq:trunc:inner}
  \end{align}
  Now \eqref{eq:trunc:outer} is $\trunc{\SL(x)}$, and
  \eqref{eq:trunc:inner} is the locatedness property of
  Definition~\ref{def:dedekind}, which holds for all $x:\RD$ as
  mentioned in Section~\ref{sec:locator:definition}.  In summary, we
  have
  \[
    \text{\eqref{eq:trunc:outer}}\implies\text{\eqref{eq:trunc:mid}}\iff\text{\eqref{eq:trunc:inner}}
  \]
  where the implications to the right can be shown using the induction
  rule for propositional truncations, and the implication to the left
  follows from the fact that $q<r$ is a decidable proposition for
  $q,r:\Q$.
\end{remark}

In other words, we cannot expect to be able to equip every real with a
locator, as this would certainly imply that the Cauchy reals and the
Dedekind reals coincide, which is not true in
general~\cite{lubarsky:cauchy}.

\begin{corollary}
  The following are equivalent:
  \begin{enumerate}
  \item For every Dedekind real there exists a signed-digit
    representation of it.
  \item The Cauchy reals and the Dedekind reals coincide.
  \end{enumerate}
\end{corollary}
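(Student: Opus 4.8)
The plan is to reduce the corollary to Theorem~\ref{thm:locator:characterization} applied pointwise. First I would fix $x:\RD$ and invoke the equivalence of items~2 and~4 of that theorem: there exists a signed-digit representation of $x$ if and only if $\isCauchyReal(x)$, i.e.\ if and only if $x$ lies in the image of the canonical inclusion $\RC\to\RD$. Quantifying over all $x:\RD$, statement~1 of the corollary is therefore equivalent to $\fa{x:\RD}\isCauchyReal(x)$, that is, to the statement that the canonical map $\RC\to\RD$ is (truncation-)surjective.

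Next I would unfold what it means for the Cauchy and Dedekind reals to \emph{coincide}: the canonical map $\RC\to\RD$ arising from Cauchy completeness of $\RD$ (recalled just before the corollary) is an equivalence. So it remains to show that this map is an equivalence exactly when it is surjective. The forward direction is immediate, since every equivalence is surjective. For the converse I would use that $\RC\to\RD$ is an embedding --- it is injective because the map respects and reflects the order and equality on both sides is the order-theoretic one --- together with the standard fact that a map which is simultaneously an embedding and a surjection is an equivalence. Hence surjectivity of $\RC\to\RD$ upgrades to the desired equivalence.

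The only step that goes beyond a direct application of Theorem~\ref{thm:locator:characterization} is this last one: one must already know that $\RC\to\RD$ is an embedding, so that surjectivity yields a genuine equivalence rather than merely a surjection, and one must read ``coincide'' as ``the canonical inclusion is an equivalence''. I expect this to be the main (and only mild) obstacle; it is not deep, as the embedding property is part of the standard theory of the Cauchy and Dedekind reals. Everything else is a pointwise unfolding of the already-established characterization.
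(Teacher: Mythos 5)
Your proposal is correct and matches the paper's intended argument: the paper states this corollary without proof as an immediate consequence of Theorem~\ref{thm:locator:characterization} (the pointwise equivalence of items~2 and~4), and your unfolding of ``coincide'' via the canonical embedding $\RC\to\RD$ --- so that pointwise surjectivity upgrades to an equivalence --- is exactly the standard reading being relied upon. No gaps; the embedding fact you invoke is part of the standard theory cited by the paper.
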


The types $\RC$ and $\RD$ do not coincide in general, but they do assuming
excluded middle or countable choice.  We are not aware of a classical
principle that is equivalent with the coincidence of $\RC$ and $\RD$.

\subsection{Dedekind cuts structure}
\label{sec:cuts-as-structure}

Let $x=(L,U)$ be a pair of predicates on the rationals, i.e.\
$L,U:\pow\Q$.  In Definition~\ref{def:dedekind} we specified the
necessary \emph{properties} for $x$ to be a Dedekind cut.  More
explicitly, we have $\isCut:\pow\Q\times\pow\Q\to\Prop$
defined by:
\begin{align*}
  \isCut(x)\coloneqq{}&\boundedLower(x)\land\boundedUpper(x)\\
  \land{}&\closedLower(x)\land\closedUpper(x)\\
  \land{}&\openLower(x)\land\openUpper(x)\\
  \land{}&\transitive(x)\land\located(x)
\end{align*}
where
\begin{align*}
  \boundedLower(x)\coloneqq{}&\ex{q : \Q} q<x,\\
  \boundedUpper(x)\coloneqq{}&\ex{r : \Q} x<r,\displaybreak[1]\\
  \closedLower(x)\coloneqq{}&\fa{q,q':\Q}(q < q') \land (q'<x)\Implies q<x,\\
  \closedUpper(x)\coloneqq{}&\fa{r,r':\Q}(r' < r) \land (x<r')\Implies x<r,\displaybreak[2]\\
  \openLower(x)\coloneqq{}&\fa{q:\Q}q<x\Implies \ex{q' : \Q} (q < q') \land (q'<x),\\
  \openUpper(x)\coloneqq{}&\fa{r:\Q}x<r \Implies\ex{r' : \Q} (r' < r) \land (x<r'),\displaybreak[1]\\
  \transitive(x)\coloneqq{}&\fa{q,r:\Q}(q<x)\land (x<r)\Implies (q<r),\\
  \located(x)\coloneqq{}&\fa{q,r:\Q}(q < r) \Implies (q<x) \lor (x<r).\\
\end{align*}

We may also consider when $x$ has these data as \emph{structure}, that
is, when it is equipped with the structure
$\cutStruct:\pow\Q\times\pow\Q\to\UU$ defined by:
\begin{align*}
  \cutStruct(x)\coloneqq{}&\bounderLower(x)\times\bounderUpper(x)\\
  \times&\closerLower(x)\times\closerUpper(x)\\
  \times&\openerLower(x)\times\openerUpper(x)\\
  \times&\transitor(x)\times\locator(x)
\end{align*}
where
\begin{align*}
  \bounderLower(x)\coloneqq{}&\sm{q : \Q} q<x,\\
  \bounderUpper(x)\coloneqq{}&\sm{r : \Q} x<r,\displaybreak[1]\\
  \closerLower(x)\coloneqq{}&\dpt{q,q':\Q}(q < q') \times (q'<x)\to q<x,\\
  \closerUpper(x)\coloneqq{}&\dpt{r,r':\Q}(r' < r) \times (x<r')\to x<r,\displaybreak[2]\\
  \openerLower(x)\coloneqq{}&\dpt{q:\Q}q<x\to \sm{q' : \Q} (q < q') \times (q'<x),\\
  \openerUpper(x)\coloneqq{}&\dpt{r:\Q}x<r \to\sm{r' : \Q} (r' < r) \times (x<r'),\displaybreak[1]\\
  \transitor(x)\coloneqq{}&\dpt{q,r:\Q}(q<x)\times (x<r)\to (q<r),\\
  \locator(x)\coloneqq{}&\dpt{q,r:\Q}(q < r) \to (q<x) + (x<r)=\SL(x).\\
\end{align*}

In this section we investigate when $x=(L,U)$ has the property
$\isCut(x)$, and when it has the data $\cutStruct(x)$.  First, note
that we cannot expect all Dedekind cuts to come equipped with that
data.

\begin{lemma}\label{lem:magic:struct:taboo}
  Suppose given a choice $\dpt{x:\RD}\SL(x)$ of locator for each
  $x:\RD$.  Then we can define a strongly non-constant function
  $f:\RD\to\bool$ in the sense that there exist reals $x,y:\RD$ with
  $f(x)\neq f(y)$.
\end{lemma}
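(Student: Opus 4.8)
The plan is to use the construction from Section~\ref{sec:locator:terminology}, where we observed that a locator for a real $x$ lets us output a Boolean depending on whether it locates $0<x$ or $x<1$. Concretely, given the assumed choice function $c:\dpt{x:\RD}\SL(x)$, I would define $f:\RD\to\bool$ by evaluating the locator $c(x)$ at the rationals $0<1$: set $f(x)\coloneqq\true$ if $c(x)(0,1,\nu)$ locates $0<x$, and $f(x)\coloneqq\false$ if it locates $x<1$. This is a genuine function because $c$ is a function and case analysis on the disjoint sum is a function; there is no truncation anywhere, so extensionality is not violated --- the point of the lemma is precisely that such a choice function cannot exist, and here we are deriving a consequence of assuming it does.

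Next I would exhibit the two reals witnessing strong non-constancy. Take $x\coloneqq 2$ and $y\coloneqq -1$ (any rational strictly greater than $1$, respectively strictly less than $0$, will do). For $x=2$ we have $\neg(2<1)$, so by Lemma~\ref{lem:g:dec} the locator $c(2)$ must locate $0<2$, hence $f(2)=\true$. For $y=-1$ we have $\neg(0<-1)$, so again by Lemma~\ref{lem:g:dec} the locator $c(-1)$ must locate $-1<1$ via the right summand, hence $f(-1)=\false$. Therefore $f(2)\neq f(-1)$, which gives the required $x,y$. Since $\bool$ is a set, $f(2)\neq f(-1)$ is a genuine disequality, and we conclude by forming the truncated existential $\ex{x,y:\RD}f(x)\neq f(y)$ from the explicit witnesses.

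The only mild subtlety --- and the step I would treat most carefully --- is making sure the evaluation of the locator is well-typed: $c(x)(0,1,\nu)$ requires a proof $\nu:0<1$ of rationals, which exists canonically, and the resulting term has type $(0<x)+(x<1)$, on which we then do $+$-elimination into $\bool$. One should also note that $f$ depends on the chosen $c$, so strictly the statement produces, from any given choice function, \emph{some} strongly non-constant $f$; no uniformity across choice functions is claimed. Everything else is routine: the appeal to Lemma~\ref{lem:g:dec} turns the order facts $\neg(x<1)$ and $\neg(0<y)$ into the statements that the decidable proposition $\gL(0<_x 1)$ holds and $\gL(0<_y 1)$ fails, which is exactly what pins down the two Boolean values.
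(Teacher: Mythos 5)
Your proposal is correct and follows essentially the same route as the paper: define $f$ by case analysis on the locator's answer at the pair $0<1$, then use Lemma~\ref{lem:g:dec} to pin down the answers at two concrete reals (the paper uses $0$ and $1$; your choice of $2$ and $-1$ works equally well and for the same reason).
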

\begin{proof}
  Given a locator for $x:\RD$, we can output true or false depending on
  whether the locator return the left or the right summand for $0<1$,
  as follows.
  \[
    f(x)=
    \begin{cases}
      \text{true}&\text{if $\gL(0<_x1)$}\\
      \text{false}&\text{if $\gU(0<_x1)$}.
    \end{cases}
  \]
  The map thus constructed must give a different answer for the real
  numbers $0$ and $1$.
\end{proof}
Since any strongly non-constant map from the reals to the Booleans
gives rise to a discontinuous map on the reals, we have violated the
continuity principle that every map on the reals is continuous.
Following Ishihara~\cite{ishihara:sequentially}, we can derive $\WLPO$
from it.
\begin{definition}
  The \emph{weak limited principle of omniscience} is the following
  consequence of $\PEM$: for every decidable predicate $P:\N\to\DProp$
  on the naturals, we can decide $\neg\ex{n:\N}P(n)$:
  \[
    \WLPO\coloneqq\dpt{P:\N\to\DProp}\neg(\ex{n:\N}P(n))+\neg\neg(\ex{n:\N}P(n)).
  \]
  Note that this is a proposition in the sense of Definition~\ref{def:prop}
  because if $Q$ is one then $Q+\neg Q$ is one.
\end{definition}
\begin{lemma}\label{lem:disc:wlpo}
  If there exists a strongly non-constant function $\RD\to\bool$, then
  $\WLPO$ holds.
\end{lemma}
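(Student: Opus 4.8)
The plan is to extract from the strongly non-constant $f:\RD\to\bool$ a single real $t$ at which $f$ has a ``jump'' that $f$ itself can see at $t$ but along which $f$ oscillates arbitrarily closely, and then to encode an arbitrary decidable $P:\N\to\DProp$ as a Cauchy sequence of rationals which \emph{stalls at $t$} when $\neg\ex{n:\N}P(n)$ and instead \emph{lands on one of the nearby oscillation points} when $\ex{n:\N}P(n)$. Evaluating $f$ at the limit of that sequence — a Boolean, hence inspectable — then decides $\neg\ex{n:\N}P(n)$ versus $\neg\neg\ex{n:\N}P(n)$, which is exactly $\WLPO$.

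First, $\WLPO$ is a proposition (for each $P$ the fibre $\neg A+\neg\neg A$ with $A\coloneqq\ex{n:\N}P(n)$ is a proposition, since $\neg A$ and $\neg\neg A$ are mutually exclusive propositions), so the truncation in the hypothesis may be stripped: fix $f:\RD\to\bool$ and reals $a,b:\RD$ with $f(a)\neq f(b)$, and, swapping $a$ and $b$ if necessary (legitimate since $\bool$ has decidable equality), assume $f(a)=\true$ and $f(b)=\false$. Put $g(x)\coloneqq f(a+x(b-a))$, so $g:\RD\to\bool$ with $g(0)=\true$ and $g(1)=\false$. Now run ordinary bisection on $[0,1]$: at each dyadic midpoint $m$ the value $g(m):\bool$ can be inspected directly, so we keep whichever half preserves the invariant, obtaining dyadic rationals $l_0=0\leq l_1\leq\cdots$ and $r_0=1\geq r_1\geq\cdots$ with $r_k-l_k=2^{-k}$, $g(l_k)=\true$ and $g(r_k)=\false$. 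The sequences $(l_k)$ and $(r_k)$ are Cauchy with the explicit modulus $\varepsilon\mapsto\ceil{\log_2(1/\varepsilon)}$, so by Cauchy completeness (Lemma~\ref{lem:dedekind:lim}) they have a common limit $t:\RD$.

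Given decidable $P:\N\to\DProp$, first inspect $g(t):\bool$. In the case $g(t)=\true$, define a dyadic sequence by $y_j\coloneqq r_{n_0}$ if there is a least $n_0\leq j$ with $P(n_0)$ (a finite decidable search), and $y_j\coloneqq l_j$ otherwise. A case analysis on the truth values involved shows $\abs{y_j-y_{j'}}\leq 2^{-\min(j,j')}$, so $y\coloneqq\lim_{j\to\infty}y_j:\RD$ exists; moreover if $\neg\ex{n:\N}P(n)$ then $y_j=l_j$ for all $j$, so $y=t$ and $g(y)=\true$, while if $\ex{n:\N}P(n)$ with least witness $n_0$ then $y_j=r_{n_0}$ for all $j\geq n_0$, so $y=r_{n_0}$ and $g(y)=\false$. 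Hence inspecting $g(y):\bool$ decides $\neg\ex{n:\N}P(n)$ (if $g(y)=\true$) versus $\neg\neg\ex{n:\N}P(n)$ (if $g(y)=\false$). The case $g(t)=\false$ is entirely symmetric, swapping the roles of $l$ and $r$ (and of $\true$ and $\false$) in the definition of $(y_j)$. The main obstacle is this middle construction: one must verify that $(l_k),(r_k)$ and especially $(y_j)$ carry genuine, explicitly given moduli of Cauchy convergence that are uniform in $P$ — and the reason this works is that the switch in $(y_j)$ from the $l$-branch to the $r$-branch happens at stage $n_0$ but only ever moves within the interval $[l_{n_0},r_{n_0}]$ of width $2^{-n_0}$, so it is automatically small regardless of where (or whether) $P$ becomes true.
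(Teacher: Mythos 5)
Your proposal is correct and follows essentially the same route as the paper's proof: an Ishihara-style argument that bisects using the inspectable Boolean values of $f$ to get two sequences with opposite $f$-values converging to a common limit, then encodes the decidable predicate $P$ as a convergent sequence that stalls near that limit or jumps to an opposite-valued point, and decides $\neg\ex{n:\N}P(n)$ versus $\neg\neg\ex{n:\N}P(n)$ by inspecting $f$ at its limit. The only differences are cosmetic — you rescale to $[0,1]$ and use dyadic approximants (which makes the Cauchy moduli explicit, a point the paper glosses over) and an explicit case split on $g(t)$ where the paper says ``without loss of generality.''
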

\begin{proof}
  Since $\WLPO$ is a proposition, we may assume to have $f:\RD\to\bool$
  and $x,y:\RD$ with $f(x)\neq f(y)$.  Let $P:\N\to\DProp$ be a
  decidable predicate.

  We start by setting up a decision procedure.  We define two
  sequences $a,b:\N\to\RD$ with $f(a_i)=\false$ and $f(b_i)=\true$ for
  each $i$, and so that $a$ and $b$ converge to the same real $l$.

  Without loss of generality, assume $f(x)=\false$ and $f(y)=\true$,
  and set:
  \begin{align*}
    a_0&\coloneqq x&
    b_0&\coloneqq y\\
    a_{n+1}&\coloneqq
             \begin{cases}
               \frac{a_n+b_n}2&\text{if $f\left(\frac{a_n+b_n}2\right)=\false$}\\
               a_n&\text{otherwise}
             \end{cases}&
    b_{n+1}&\coloneqq
             \begin{cases}
               \frac{a_n+b_n}2&\text{if $f\left(\frac{a_n+b_n}2\right)=\true$}\\
               b_n&\text{otherwise}
             \end{cases}
  \end{align*}

  In words, with $a_n$ and $b_n$ defined, we decide the next point by
  considering $f$ evaluated at the midpoint $\frac{a_n+b_n}2$, and
  correspondingly updating one of the points.  The sequences converge
  to the same point $l$.  Without loss of generality, we have
  $f(a_n)=f(l)=\false$ and $f(b_n)=\true$ for all $n:\N$.

  We may now decide $\neg\ex{n:\N}P(n)$.  We first define a sequence
  $c:\N\to\RD$ as follows.  For a given $n:\N$, we decide if there is
  any $i<n$ for which $P(i)$ holds, and if so, we set $c_n=b_i$ for
  the least such $i$.  Otherwise, we set $c_n=l$.

  The sequence $c$ converges, giving a limit $m:\RD$.  Consider
  $f(m)$.

  If $f(m)=\false$, then $\neg\ex{n:\N}P(n)$, since if there did exist
  $n$ with $P(n)$, then $m=b_i$ for some $i\leq n$, so that
  $f(m)=f(b_i)=\true$.

  If $f(m)=\true$, then $\neg\neg\ex{n:\N}P(n)$, since if
  $\fa{n:\N}\neg P(n)$ then $m=l$ and so $f(m)=\false$.
\end{proof}

The following key theorem explains the relationships between being a
Dedekind cut, having the Dedekind data $\cutStruct(x)$, and equipping
a real with a locator.
\begin{theorem}\label{thm:dedekind:struct:prop:rel}
  For a pair $x=(L,U)$ of predicates on the rationals we have the
  following:
  \begin{enumerate}
  \item $\cutStruct(x)\to\isCut(x)$,
  \item $\trunc{\cutStruct(x)}\Implies\isCut(x)$,
  \item $\isCut(x)\times\SL(x)\to\cutStruct(x)$,
  \item $\isCut(x)\times\trunc{\SL(x)}\Implies\isCauchyReal(x)$, and
  \item $\trunc{\cutStruct(x)}\Implies\isCauchyReal(x)$.
  \end{enumerate}
\end{theorem}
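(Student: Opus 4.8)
The plan is to prove the five items in order, with item~(3) carrying essentially all of the work; items~(1), (2), (4) and~(5) are then formal consequences of it together with Lemma~\ref{lem:bounders}, Lemma~\ref{lem:g:dec}, Corollary~\ref{cor:enum:dec} and Theorem~\ref{thm:locator:characterization}.

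For item~(1) I would unfold $\cutStruct(x)$ and $\isCut(x)$ and give a componentwise map. The ``closed'' and ``transitive'' components are the \emph{same} types on both sides, because in the truncated logic $\land$ unfolds to $\times$ and $\Implies$ to $\to$; so $\closerLower(x)=\closedLower(x)$, $\closerUpper(x)=\closedUpper(x)$ and $\transitor(x)=\transitive(x)$ hold definitionally and nothing is needed. For the ``bounded'' components I postcompose with the truncation map $\truncm{\,\cdot\,}$, sending $\sm{q:\Q}q<x$ to $\ex{q:\Q}q<x$ and likewise for the upper bound; for the ``open'' components I postcompose with the truncation map on the $\Sigma$-type in the conclusion; and for the locator component $\locator(x)=\SL(x)$ I postcompose $(q<x)+(x<r)\to\trunc{(q<x)+(x<r)}=(q<x)\lor(x<r)$, recovering $\located(x)$. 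Item~(2) is then immediate: $\isCut(x)$ is a proposition (a finite product of propositions), so the map of item~(1) factors through $\trunc{\cutStruct(x)}$ by the universal property of the truncation.

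Item~(3) is the heart. Given $\isCut(x)$ together with a locator $\ell:\SL(x)$ I must build all of $\cutStruct(x)$. The locator component is $\ell$ itself; the ``closed'' and ``transitive'' components are again, definitionally, the corresponding parts of $\isCut(x)$; and the ``bounded'' components $\bounderLower(x)=\sm{q:\Q}q<x$ and $\bounderUpper(x)$ are precisely the output of Lemma~\ref{lem:bounders} applied to $\ell$. It remains to produce the ``open'' components, and this is the only nontrivial construction. For $\openerLower(x)$, given $q<x$ I must \emph{find} a specific $q'$ with $q<q'<x$. By $\openLower(x)$ (a component of $\isCut(x)$) applied twice, there exist rationals $q_1<q_2$ with $q<q_1$ and $q_1<x$ and $q_2<x$. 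Consider the predicate
\[
  P(q_1,q_2)\coloneqq(q<q_1)\land(q_1<q_2)\land\gL(q_1<_x^\ell q_2),
\]
where the last conjunct is read as $\bot$ unless $q_1<q_2$ (so the proof of $q_1<q_2$ required by the locator is supplied by the middle conjunct). This is a \emph{decidable} proposition in $(q_1,q_2):\Q\times\Q$, since order comparisons of rationals are decidable and $\gL$ is decidable by Lemma~\ref{lem:loc:dec}. The witness pair above satisfies $P$: from $q_2<x$ we get $\neg(x<q_2)$ by transitivity (equivalently, disjointness), hence $\gL(q_1<_x^\ell q_2)$ by Lemma~\ref{lem:g:dec}. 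Thus $\ex{(q_1,q_2):\Q\times\Q}P(q_1,q_2)$, and Corollary~\ref{cor:enum:dec}, along an enumeration $\N\eqv\Q\times\Q$ (which exists since $\Q$ is enumerable), upgrades this to an honest pair $(q_1,q_2)$ with $P(q_1,q_2)$. Then $q_1$ is the desired $q'$: $q<q_1$ by the first conjunct, and $q_1<x$ since $\gL(q_1<_x^\ell q_2)\Implies q_1<x$ by the defining property of $\gL$ from Lemma~\ref{lem:loc:dec}. The construction of $\openerUpper(x)$ is symmetric, using $\gU$ and the other half of Lemma~\ref{lem:g:dec}.

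Item~(4) is then formal: $\isCauchyReal(x)$ is a proposition, and the hypothesis $\isCut(x)$ says that the pair $(L,U)$ is a Dedekind real, on which a locator depends only through $(L,U)$; so $\trunc{\SL(x)}\Implies\isCauchyReal(x)$ by the equivalence of items~1 and~4 of Theorem~\ref{thm:locator:characterization}. For item~(5) I compose the map of item~(1) with the $\locator$-projection $\cutStruct(x)\to\SL(x)$ and the truncation map to get $\cutStruct(x)\to\isCut(x)\times\trunc{\SL(x)}$, then apply item~(4); since $\isCauchyReal(x)$ is a proposition this factors through $\trunc{\cutStruct(x)}$. The main obstacle, such as it is, is the ``open'' components in item~(3): one must notice that the naive decidable predicate ``$q<q_1$ and $\gL(q_1<_x^\ell r)$'' with a single auxiliary rational $r$ does not work (if $r$ lies above $x$, the locator's answer at $(q_1,r)$ is not forced, and indeed $\gL(q_1<_x^\ell r)$ would only re-confirm $q<x$), and that the search must instead range over a \emph{pair} of rationals, both lying below $x$, so that Lemma~\ref{lem:g:dec} pins the locator down; everything else is routine repackaging.
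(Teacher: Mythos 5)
Your proposal is correct, and items (1), (2), (4) and (5) are handled exactly as in the paper (componentwise truncation maps, the universal property of $\trunc{\,\cdot\,}$ since $\isCut(x)$ is a proposition, Theorem~\ref{thm:locator:characterization}, and composition of (2) with (4)). The only genuine divergence is in the open components of item~(3): the paper obtains $\openerLower(x)$ and $\openerUpper(x)$ in one line by combining the Archimedean \emph{structure} of Lemma~\ref{lem:archimedean:struct} with the fact that rationals carry locators (Lemma~\ref{lem:rationals}), i.e.\ from $q<x$ it applies the already-proved structural Archimedean property to the pair $(i(q),x)$ of reals with locators to find $q'$ with $q<q'<x$. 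You instead re-run the underlying technique directly: a decidable predicate on pairs $(q_1,q_2):\Q\times\Q$ asserting $q<q_1<q_2$ and $\gL(q_1<_x^\ell q_2)$, inhabited in the existential sense by two applications of roundedness plus Lemma~\ref{lem:g:dec} (via disjointness), and then upgraded to a witness by Corollary~\ref{cor:enum:dec} over an enumeration of $\Q\times\Q$. This is a correct, slightly more elementary and self-contained argument---essentially an inlined special case of the proof of Lemma~\ref{lem:archimedean:struct} where one endpoint is rational---whereas the paper's citation is shorter and reuses a lemma it needs anyway (e.g.\ for Corollary~\ref{cor:cotrans:rat}). Your remark that a single auxiliary rational would not pin the locator down, forcing the search over a pair both below $x$, is a nice observation not made explicit in the paper.
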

The third item tells us that for a given Dedekind real $x$, in order
to obtain the structures that make up $\cutStruct(x)$, we only require
$\SL(x)$.
\begin{proof}
  We show the first item by considering all property/structure-pairs
  above.

  $\bounderLower(x)\to\boundedLower(x)$ follows by applying the
  truncation map $\truncm{\,\cdot\,}$ of
  Definition~\ref{def:truncation}, and similarly for $\boundedUpper$.

  $\closerLower(x)\to\closedLower(x)$ is trivial since, following
  Definition~\ref{def:props:interp}, their definitions work out to the
  same thing: we do not need to make any changes to make
  $\closerLower$ structural.

  $\openerLower(x)\to\openLower(x)$ by a pointwise truncation: let
  $q:\Q$ be arbitrary and assume $q<x$, then we get $\sm{q' : \Q} (q <
  q') \times (q'<x)$, and hence $\ex{q' : \Q} {(q < q')} \land {(q'<x)}$.

  Again following Definition~\ref{def:props:interp}, $\transitive(x)$
  and $\transitor(x)$ are defined equally.

  $\SL(x)\to\located(x)$ again by a pointwise truncation.

  The second item follows using the elimination rule for propositional
  truncations since $\isCut(x)$ is a proposition.

  For the third item, it remains to construct bounds, and to construct
  $\openerLower(x)$ and $\openerUpper(x)$.  The former is
  Lemma~\ref{lem:bounders}.  The latter follows from the Archimedean
  structure of Lemma~\ref{lem:archimedean:struct} and the fact that we
  have locators for rationals, as in Lemma~\ref{lem:rationals}.

  The fourth item follows from
  Theorem~\ref{thm:locator:characterization}.

  The fifth item follows by combining the second and the fourth.
\end{proof}

\begin{theorem}
  For an arbitrary pair $x=(L,U)$ of predicates on the rationals it is
  not provable that $\isCut(x)$ implies $\trunc{\cutStruct(x)}$.
\end{theorem}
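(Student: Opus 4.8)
The plan is to deduce this unprovability from the already-established fact that the Cauchy and Dedekind reals need not coincide. First I would argue by contradiction: suppose there were a proof that $\isCut(x)\Implies\trunc{\cutStruct(x)}$ for every pair $x=(L,U)$ of predicates on the rationals. Since an element of $\RD$ is by definition exactly such a pair $(L,U)$ together with a proof of $\isCut(L,U)$, this supposed proof would immediately yield $\trunc{\cutStruct(x)}$ for every $x:\RD$.

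Next I would invoke the fifth item of Theorem~\ref{thm:dedekind:struct:prop:rel}, namely $\trunc{\cutStruct(x)}\Implies\isCauchyReal(x)$, to obtain $\fa{x:\RD}\isCauchyReal(x)$: every Dedekind real would lie in the image of the canonical inclusion $\RC\to\RD$, and hence, that inclusion being an embedding, $\RC$ and $\RD$ would coincide. (Alternatively one can project $\trunc{\cutStruct(x)}$ onto its $\SL(x)$-component, using that propositional truncation is functorial, to get $\trunc{\SL(x)}$ for all $x:\RD$, and then apply Theorem~\ref{thm:locator:characterization} together with the corollary in Section~\ref{sec:sdr} to the same effect.) Finally I would appeal to Lubarsky~\cite{lubarsky:cauchy}, which furnishes a model of the type theory of Section~\ref{sec:prelim} in which $\RC$ and $\RD$ do \emph{not} coincide; hence their coincidence is not provable, contradicting the previous step, and so $\isCut(x)\Implies\trunc{\cutStruct(x)}$ is not provable.

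The proof is short because the substance is carried by the earlier characterization results; the only genuine obstacle is meta-logical. As the statement is an independence claim it cannot be established internally, so the argument has to descend to an external countermodel, and one must be satisfied that Lubarsky's construction --- originally phrased for set-theoretic constructive foundations --- transfers to the present setting, in the same spirit as the introductory remarks about CZF and elementary toposes. The conceptual point is simply that equipping an arbitrary Dedekind cut with the Dedekind \emph{structure} $\cutStruct$, even merely up to propositional truncation, would in particular furnish a truncated locator, and by the results of this paper a truncated locator is precisely what makes a Dedekind real a Cauchy real.
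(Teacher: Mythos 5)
Your argument is correct and is essentially the paper's own proof: both deduce from item~5 of Theorem~\ref{thm:dedekind:struct:prop:rel} that $\trunc{\cutStruct(x)}$ forces $x$ to be a Cauchy real, and then appeal to Lubarsky's result that the Cauchy and Dedekind reals need not coincide. Your additional remarks about the meta-logical nature of the claim and the transfer of the countermodel to the type theory are sensible elaborations but do not change the route.
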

\begin{proof}
  By Theorem~\ref{thm:dedekind:struct:prop:rel},
  $\trunc{\cutStruct(x)}$ implies that $x$ is a Cauchy real.  However,
  in general the Cauchy reals and the Dedekind reals do not
  coincide~\cite{lubarsky:cauchy}.
\end{proof}

\section{Some constructive analysis with locators}
\label{sec:analysis}

We show some ways of using locators in an existing theory of
constructive analysis.  We re-emphasize that although the technique of
equipping numbers with locators can be applied to any archimedean
ordered field, for clarity and brevity we will work with the Dedekind
reals $\RD$, with more general description given in
Booij~\cite{booij:thesis}.

The central notion is that of functions on the reals that \emph{lift
  to locators}, discussed in Section~\ref{sec:analysis:preliminaries},
which is neither weaker nor stronger than continuity.  We compute
locators for integrals in Section~\ref{sec:integrals}.  We discuss how
locators can help computing roots of functions in
Section~\ref{sec:ivt}.

\subsection{Preliminaries}
\label{sec:analysis:preliminaries}

What are the functions on the reals that allow us to compute?  When
such a function $f:\RD\to\RD$ is applied to an input real number $x:\RD$
that we can compute with, then we should be able to compute with the
output $f(x)$.  This can be formalized in terms of locators in the
following straightforward way, which we use as an abstract notion of
computation.

\begin{definition}\label{def:lift:locs}
  A function $f:\RD\to\RD$ \emph{lifts to locators} if it comes equipped
  with a method for constructing a locator for $f(x)$ from a locator
  for $x$.  This means that $f$ lifts to locators if it is equipped with
  an element of the type
  \[
    \dpt{x:\RD}\SL(x)\to\SL(f(x)).
  \]

  Another way to say this is that $f$ lifts to locators iff we can find
  the top edge in the diagram
  \begin{center}
    \begin{tikzcd}
      \RDL \arrow[r] \arrow[d,"\fst"] \arrow[dr,phantom,"\circ"] &
      \RDL \arrow[d,"\fst"]
      \\
      \RD \arrow[r,"f"] & \RD
    \end{tikzcd}
  \end{center}
  where $\RDL\coloneqq\sm{x:\RD}\SL(x)$ is the type of real
  numbers equipped with locators.

  ``Lifting to locators'' itself is structure.
\end{definition}
\begin{remark}
  If the reals are defined intensionally, for example as the
  collection of all Cauchy sequences without quotienting, then every
  function on them is defined completely by its behavior on those
  intensional reals.  However, in our case, given only the lifting
  structure $\RDL\to\RDL$, we cannot recover the function $f:\RD\to\RD$,
  because we do not have a locator for every $x:\RD$.

  In other words, well-behaved maps are specified by two pieces of
  data, namely a function $f:\RD\to\RD$ representing the extensional
  value of the function, and a map $\RDL\to\RDL$ that tells us how to
  compute.
\end{remark}

\begin{example}\label{ex:exp:lifts}
  The exponential function $\exp(x)=\sum_{k=0}^\infty\frac{x^k}{k!}$
  of Examples~\ref{ex:exp} and~\ref{ex:exp:limit} lifts to locators, for
  example using our construction of locators for limits as in
  Lemma~\ref{lem:lim:locator}.
\end{example}

In order to start developing analysis, we define some notions of
continuity.
\begin{definition}\label{def:cty}
  A function $f:\RD\to\RD$ is \emph{continuous at $x:\RD$} if
  \[
    \fa{\varepsilon:\Q_+} \ex{\delta:\Q_+} \fa{y:\RD} \abs{x-y}
    < \delta\Implies\abs{f(x)-f(y)}<\varepsilon.
  \]
  $f$ is \emph{pointwise continuous} if it is continuous at all
  $x:\RD$.
\end{definition}

\begin{definition}\label{def:uniform:cty}
  A \emph{modulus of uniform continuity for $f$ on $[a,b]$}, with
  $a,b:\RD$, is a map $\omega:\Q_+\to\Q_+$ with:
  \[
    \fa{x,y\in[a,b]} \abs{x-y}<\omega(\varepsilon) \Implies
    \abs{f(x)-f(y)}<\varepsilon.
  \]
\end{definition}
\begin{example}[Continuity of $\exp$]\label{ex:exp:cty}
  For any $a,b$, there \emph{exists} a modulus of uniform continuity
  for $\exp$ on the range $[a,b]$.  If $a$ and $b$ have locators, then
  we can \emph{find} a modulus of uniform continuity for $\exp$ on
  that interval.
\end{example}

From a constructive viewpoint in which computation and continuity
align, it would be desirable if some form of continuity of $f:\RD\to\RD$
would imply that it lifts to locators.  Alas, this is not the case, not
even for constant functions.
\begin{lemma}
  If it holds that all constant functions lift to locators, then every
  $x:\RD$ comes equipped with a locator.
\end{lemma}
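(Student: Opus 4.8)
The plan is to transport a locator from a rational --- which we already know carries one --- to an arbitrary real $x$ by routing it through the constant function with value $x$. Fix $x:\RD$; we must exhibit an element of $\SL(x)$. Let $c_x:\RD\to\RD$ be the constant function $c_x\coloneqq\lambda y.\,x$. By assumption all constant functions lift locators, so in particular $c_x$ does; unfolding Definition~\ref{def:lift:locs}, this furnishes an element of $\dpt{y:\RD}\SL(y)\to\SL(c_x(y))$, which, since $c_x(y)$ computes to $x$, is the same as an element of $\dpt{y:\RD}\SL(y)\to\SL(x)$.

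Next I would supply a witness to plug into this lifting structure. Take $y\coloneqq 0$, regarded as the image $i(0):\RD$ of the rational $0$; since $0=i(0)$ we may invoke either of the two constructions in the proof of Lemma~\ref{lem:rationals} to obtain a locator $\ell_0:\SL(0)$. Applying the lifting structure of $c_x$ at $y=0$ to $\ell_0$ produces an element of $\SL(x)$. As $x$ was arbitrary, we have constructed an element of $\dpt{x:\RD}\SL(x)$, that is, every real comes equipped with a locator.

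I do not expect any genuine obstacle: the argument is essentially immediate once set up correctly. The two points worth being careful about are, first, that ``all constant functions lift locators'' must be read as supplying the lifting structure \emph{uniformly} in the value of the constant function (so that we indeed obtain $\dpt{x:\RD}\dpt{y:\RD}\SL(y)\to\SL(x)$), and, second, that the argument needs at least one real whose locator is available unconditionally, for which any rational serves by Lemma~\ref{lem:rationals}. Since no propositional truncations enter anywhere, the conclusion genuinely produces structure, matching the wording ``comes equipped with a locator''.
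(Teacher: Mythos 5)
Your proof is correct and matches the paper's own argument essentially verbatim: apply the lifting structure of the constant function at $x$ to the locator of the rational $0$ (Lemma~\ref{lem:rationals}) to obtain an element of $\SL(x)$. Your extra remarks about uniformity of the hypothesis and the absence of truncations are accurate but not needed beyond what the paper already does.
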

Using Lemmas~\ref{lem:magic:struct:taboo} and~\ref{lem:disc:wlpo},
this then yields the constructive taboo $\WLPO$.
\begin{proof}
  For $x:\RD$, let $f:\RD\to\RD$ be the constant map at $x$, and note
  that $f$ is continuous, so that by assumption it lifts to locators.
  Since the rational number $0$ has a locator, $f(0)=x$ has a locator.
\end{proof}
The converse direction, that lifting to locators would imply continuity,
also fails dramatically.
\begin{lemma}
  Assuming $\PEM$, we can define a discontinuous map $f:\RD\to\RD$ that
  lifts to locators.
\end{lemma}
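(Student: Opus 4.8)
The plan is to exploit the fact that under $\PEM$ every real has a locator (Lemma~\ref{lem:locator:pem}), so that the lifting structure $\dpt{x:\RD}\SL(x)\to\SL(f(x))$ can be supplied trivially for \emph{any} function $f:\RD\to\RD$ whatsoever: we simply ignore the input locator and invoke Lemma~\ref{lem:locator:pem} to produce a locator for $f(x)$. Thus, assuming $\PEM$, \emph{every} function $f:\RD\to\RD$ lifts locators, and in particular it suffices to exhibit some discontinuous $f:\RD\to\RD$ that is definable without further assumptions beyond $\PEM$.

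First I would construct the discontinuous map. A convenient choice is a step function, e.g.\ $f(x)\coloneqq 0$ if $x\leq 0$ and $f(x)\coloneqq 1$ if $x>0$; using $\PEM$ we can decide the proposition $0<x$ for each $x:\RD$ (the relation $0<x$ is a proposition by Definition~\ref{def:dedekind}), so this defines a genuine function $\RD\to\RD$. It is discontinuous at $0$: for $\varepsilon=1/2$ there is no $\delta:\Q_+$ witnessing continuity, since arbitrarily small positive reals $y$ have $\abs{f(0)-f(y)}=1\not<1/2$. (Alternatively one could reuse the strongly non-constant map of Lemma~\ref{lem:magic:struct:taboo} together with Lemma~\ref{lem:disc:wlpo}, but the explicit step function is cleaner and self-contained.)

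Then I would assemble the lifting structure: given $x:\RD$ and a locator $\ell:\SL(x)$, discard $\ell$ and apply Lemma~\ref{lem:locator:pem} to $f(x):\RD$ to obtain an element of $\SL(f(x))$. This produces the required element of $\dpt{x:\RD}\SL(x)\to\SL(f(x))$, so $f$ lifts locators in the sense of Definition~\ref{def:lift:locs}, while being discontinuous.

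The main obstacle — really the only point requiring any care — is checking that the step function is a well-defined \emph{function} $\RD\to\RD$ rather than merely a relation: this is where $\PEM$ is genuinely used, to decide $0<x$ for each $x$ and thereby select the branch. Everything else is immediate from Lemma~\ref{lem:locator:pem} and the definitions; no quantitative estimates beyond the trivial failure of continuity at $0$ are needed.
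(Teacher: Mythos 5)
Your proposal is correct and follows the paper's proof exactly: use $\PEM$ to define a discontinuous function (the paper leaves the specific choice implicit, you spell out a step function), and then observe that lifting locators is trivial because Lemma~\ref{lem:locator:pem} produces a locator for $f(x)$ regardless of the input locator. The extra details you give about well-definedness and failure of continuity at $0$ are fine but not a departure from the paper's argument.
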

\begin{proof}
  We can use $\PEM$ to define a discontinuous function, which
  automatically lifts to locators by applying Lemma~\ref{lem:locator:pem}.
\end{proof}
It may be the case that the structure of lifting to locators can be used
to strengthen certain \emph{properties} of continuity into
\emph{structures}.  For example, does every function that lifts to
locators and is pointwise continuous come equipped with the structure
\[
  \dpt{x:\RD}\dpt{\varepsilon:\Q_+} \sm{\delta:\Q_+} \fa{y:\RD} \abs{x-y} <
  \delta\Implies\abs{f(x)-f(y)}<\varepsilon
\]
of structural pointwise continuity at every $x:\RD$?  We leave this as
an open question.

\medskip

For the above reasons, the theorems in this section and the next
assume continuity \emph{and} a structure of lifting to locators: the
former to make the constructive analysis work, and the latter to
compute.

\subsection{Integrals}
\label{sec:integrals}

We can compute definite integrals of uniformly continuous functions in
the following way.

\begin{theorem}\label{thm:int:locator}
  Suppose $f:\RD\to\RD$ has a modulus of uniform continuity on
  $[a,b]$, and $a$ and $b$ are real numbers with locators.  Suppose
  that $f$ lifts to locators. Then $\int_a^bf(x)\dif x$ has a locator.
\end{theorem}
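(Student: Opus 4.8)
The plan is to realise $\int_a^b f(x)\dif x$ as the limit of its left-endpoint Riemann sums, to equip that sequence \emph{simultaneously} with locators and with a modulus of Cauchy convergence, and then to invoke Lemma~\ref{lem:lim:locator}. For $n\geq 1$ set
\[
  S_n\coloneqq\frac{b-a}{n}\sum_{k=0}^{n-1}f\!\left(a+\frac{k}{n}(b-a)\right),
\]
and $S_0\coloneqq 0$. By definition $\int_a^b f(x)\dif x=\lim_{n\to\infty}S_n$, where the limit is formed as in Lemma~\ref{lem:dedekind:lim} from any modulus of Cauchy convergence for $(S_n)$; this real number does not depend on the chosen modulus.

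First I would equip every $S_n$ with a locator, uniformly in $n$. Since $a$ and $b$ have locators by hypothesis, $b-a$ has a locator by Theorem~\ref{thm:ops}. For each $k<n$ the rational $k/n$ has a locator by Lemma~\ref{lem:rationals}, so $a+\frac{k}{n}(b-a)$ has one by Theorem~\ref{thm:ops}, and hence so does $f\!\left(a+\frac{k}{n}(b-a)\right)$ because $f$ lifts locators (Definition~\ref{def:lift:locs}). Iterating the additive case of Theorem~\ref{thm:ops} gives a locator for the finite sum $\sum_{k=0}^{n-1}f\!\left(a+\frac{k}{n}(b-a)\right)$, and multiplying by $\frac{b-a}{n}$, which likewise has a locator, gives one for $S_n$. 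This is uniform in $n$, so we obtain an element of $\dpt{n:\N}\SL(S_n)$.

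The one genuinely computational step is to build a modulus of Cauchy convergence $M:\Q_+\to\N$ for $(S_n)$ out of the given modulus of uniform continuity $\omega$ of $f$ on $[a,b]$ (Definition~\ref{def:uniform:cty}). Applying Lemma~\ref{lem:bounders} to $b-a$ yields rationals with $q<b-a<r$, and hence a rational $B>0$ with $\abs{b-a}<B$. For $m,n\geq 1$ one compares both $S_m$ and $S_n$ with the Riemann sum $S_{mn}$ over their common refinement: each subinterval of the $m$-fold partition has width at most $B/m$, so once $B/m<\omega(\varepsilon')$ the sample points of the $mn$-fold partition lying inside it are within $\omega(\varepsilon')$ of its left endpoint, giving $\abs{S_m-S_{mn}}\leq\abs{b-a}\,\varepsilon'<B\varepsilon'$ and symmetrically $\abs{S_n-S_{mn}}<B\varepsilon'$, so that $\abs{S_m-S_n}<2B\varepsilon'$. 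Taking $\varepsilon'\coloneqq\varepsilon/(4B)$ and $M(\varepsilon)\coloneqq\floor{B/\omega(\varepsilon')}+1$ then makes $M$ a modulus of Cauchy convergence for $(S_n)$; controlling Riemann sums over these non-nested partitions through a common refinement is the only real obstacle, the rest being an assembly of previously established locator constructions. Finally, Lemma~\ref{lem:lim:locator} applied to $(S_n)$ together with $M$ and the element of $\dpt{n:\N}\SL(S_n)$ produces a locator for $\lim_{n\to\infty}S_n=\int_a^b f(x)\dif x$.
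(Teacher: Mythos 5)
Your proposal is correct and follows essentially the same route as the paper: realise $\int_a^b f(x)\dif x$ as the limit of the Riemann sums $\frac{b-a}{n}\sum_{k=0}^{n-1}f\left(a+k\cdot\frac{b-a}{n}\right)$, equip each sum with a locator via Lemma~\ref{lem:rationals}, Theorem~\ref{thm:ops} and the lifting hypothesis, obtain a modulus of Cauchy convergence from $\omega$ together with a rational bound $B$ on $b-a$ from Lemmas~\ref{thm:ops} and~\ref{lem:bounders}, and conclude with Lemma~\ref{lem:lim:locator}. The only difference is that you spell out the common-refinement estimate behind the Cauchy modulus, which the paper leaves implicit.
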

\begin{proof}
  For uniformly continuous functions, the integral
  $\int_a^bf(x)\dif x$ can be computed as the limit
  \[
    \lim_{n\to\infty}\frac{b-a}{n}\sum_{k=0}^{n-1}f\left(a+k\cdot\frac{b-a}{n}\right).
  \]
  Now every value
  \[
    \frac{b-a}{n}\sum_{k=0}^{n-1}f\left(a+k\cdot\frac{b-a}{n}\right).
  \]
  in the sequence comes equipped with a locator using
  Lemmas~\ref{lem:rationals} and~\ref{thm:ops}, and using the fact
  that $a$ and $b$ have locators and $f$ lifts to locators.  From the
  modulus of uniform continuity of $f$, and the computation of a
  rational $B$ with $b-a\leq B$ using Lemmas~\ref{thm:ops}
  and~\ref{lem:bounders} we can compute a modulus of Cauchy
  convergence of the sequence.  Hence the limit has a locator using
  Lemma~\ref{lem:lim:locator}.
\end{proof}

Combining this with the calculation of signed-digit representations of
reals with locators in Theorem~\ref{thm:sdr}, the above means we can
generate the digit sequence of certain integrals.  Through the
construction of close bounds in Lemma~\ref{lem:precise}, we can in
principle verify the value of integrals up to arbitrary precision.

\begin{remark}
  Integrals, as elements of $\RD$, can be defined given only the
  \emph{existence} of a modulus of uniform continuity.  To get a
  locator, we use the modulus of uniform continuity to find a modulus
  of Cauchy convergence.
\end{remark}
\begin{example}
  The integral $\int_0^8\sin(x+\exp(x))\dif x$ has a locator (where
  $\sin$ is defined, and shown to lift to locators, in a way similar to
  $\exp$).  This integral is often incorrectly approximated by
  computer algebra systems.  Mahboubi et
  al.~\cite[Section~6.1]{mahboubi:integrals} have formally verified
  approximations of this integral, and in principle our work gives an
  alternative method to do so.  However, our constructions are not
  efficient enough to do so in practice, and we give some possible
  remedies in the conclusions in Section~\ref{sec:conclusions}.
\end{example}

\subsection{Intermediate value theorems}
\label{sec:ivt}

We may often compute locators of real numbers simply by analysing the
proof of existing theorems in constructive analysis.  The following
construction of the root of a function is an example of us being able
to construct locators simply by following the proof in the literature.
\begin{theorem}
  Suppose $f$ is pointwise continuous on the interval $[a,b]$ and
  $f(a)<0<f(b)$ with $a,b:\RD$.  Then for every $\varepsilon:\Q_+$ we
  can find $x:\RD$ with $\abs{f(x)}<\varepsilon$.  If $f$ lifts to
  locators, and $a$ and $b$ are equipped with locators, then $x$ is
  equipped with a locator.
\end{theorem}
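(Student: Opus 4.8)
The plan is a bisection, a simplified form of the trisection method sketched for Theorem~\ref{thm:exact:ivt} in Section~\ref{sec:locators} --- simpler here because we need only an approximate root and do not use local nonconstancy. First I would recursively build an increasing sequence $a_n$ and a decreasing sequence $b_n$ in $[a,b]$ (we may assume $a<b$, since otherwise $f(a)<0<f(b)$ cannot hold) with $b_n-a_n\leq 2^{-n}(b-a)$, maintaining the invariant
\[
  f(a_n)<\varepsilon/2\qquad\text{and}\qquad -\varepsilon/2<f(b_n).
\]
The base case $a_0\coloneqq a$, $b_0\coloneqq b$ holds because $f(a)<0<\varepsilon/2$ and $-\varepsilon/2<0<f(b)$. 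For the step, given $a_n,b_n$, put $m\coloneqq\frac{a_n+b_n}{2}$; as $-\varepsilon/2<\varepsilon/2$ are rationals we may split on $(-\varepsilon/2<f(m))$ versus $(f(m)<\varepsilon/2)$ --- in the structured case below this split is literal, coming from the locator of $f(m)$, and in general it is the truncated disjunction supplied by locatedness of the Dedekind real $f(m)$ --- and set $(a_{n+1},b_{n+1})\coloneqq(a_n,m)$ in the first case and $(m,b_n)$ in the second. In each branch the invariant persists (the new left endpoint satisfies $f<\varepsilon/2$ and the new right endpoint satisfies $-\varepsilon/2<f$, using the inductive hypothesis for the unchanged endpoint and the branch condition for $m$) and the width halves.

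Next I would pass to the limit. From $b_n-a_n\leq 2^{-n}(b-a)$ together with boundedness of the Dedekind real $b-a$, which yields a rational $B$ with $b-a<B$, the map taking $\varepsilon':\Q_+$ to the least $N:\N$ with $2^{-N}B<\varepsilon'$ is a modulus of Cauchy convergence for both $a$ and $b$, so by Lemma~\ref{lem:dedekind:lim} we get $x\coloneqq\lim_n a_n=\lim_n b_n$ with $a\leq x\leq b$. Since $f$ is pointwise continuous on $[a,b]$ it is continuous at $x$, so $f(a_n)\to f(x)$ and $f(b_n)\to f(x)$; passing the (non-strict consequences of the) invariant inequalities to the limit gives $-\varepsilon/2\leq f(x)\leq\varepsilon/2$, hence $\abs{f(x)}\leq\varepsilon/2<\varepsilon$, which is the first claim. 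To carry this out without countable choice when $a,b$ have no locators, I would define each $a_{n+1},b_{n+1}$ directly by their Dedekind cuts, folding the merely truncated decision into the cut data (legitimate since the Dedekind conditions are propositions), and likewise define $x$ directly as $\sup_n a_n=\inf_n b_n$, so that no modulus must be extracted from a truncation.

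Finally, suppose $f$ lifts locators and $a,b$ are equipped with locators. Then the recursion runs genuinely: inductively $a_n,b_n$ carry locators, hence so does $m=\frac{a_n+b_n}{2}$ by Theorem~\ref{thm:ops} and Lemma~\ref{lem:rationals}, hence $f(m)$ carries a locator, and evaluating that locator at the pair $-\varepsilon/2<\varepsilon/2$ returns an element of $(-\varepsilon/2<f(m))+(f(m)<\varepsilon/2)$ on which we branch, transporting the locator to the chosen new endpoint. This produces an honest element of $\dpt{n:\N}\SL(a_n)$, and since $b-a$ now has a locator, Lemma~\ref{lem:bounders} lets us actually find $B$ and hence the modulus; Lemma~\ref{lem:lim:locator} then equips $x=\lim_n a_n$ with a locator. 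The hard part here is not the analysis --- the bisection is routine --- but the structure/property bookkeeping: to apply Lemma~\ref{lem:lim:locator} we need a genuine family of locators along the sequence, which is why the branching must be performed with actual locators (via Theorem~\ref{thm:ops}) rather than with the merely existential Archimedean property, and why in the unstructured case we instead sidestep the choices by building the reals directly as cuts. A minor point is that only continuity of $f$ at the single point $x$ is used, so pointwise continuity suffices throughout.
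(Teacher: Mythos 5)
Your structured half is fine and is essentially the paper's: when $f$ lifts locators and $a,b$ carry locators, every stage of the subdivision carries a locator, a locator for $b-a$ together with Lemma~\ref{lem:bounders} gives a rational $B$ bounding the interval widths, hence a modulus of Cauchy convergence, and Lemma~\ref{lem:lim:locator} equips the limit with a locator. The gap is in the first claim, where $a,b:\RD$ are arbitrary and $f$ need not lift locators. Your bisection must convert, at each of countably many dependent stages, the merely existential disjunction $(-\varepsilon/2<f(m))\lor(f(m)<\varepsilon/2)$ into an actual branch; since the two branches lead to genuinely different continuations, defining the whole sequence this way is exactly an instance of dependent choice, which is not assumed here, and the untruncated ``we can find $x$'' in the statement prevents you from hiding the entire construction inside a truncation. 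Your proposed repair --- folding the truncated decision into the cut data of $a_{n+1},b_{n+1}$ --- does not go through. The folded endpoint is a conditional real, ``$m$ if $f(m)<\varepsilon/2$, else $a_n$'', and such conditionals are not Dedekind reals: for the pair of predicates describing ``$1$ if $P$, else $0$'', locatedness evaluated at $0<1$ is literally $P\lor\neg P$. Folding instead over the always-(merely)-true disjunction, say by taking the join of the lower cuts of the two candidate endpoints, does not rescue this: for $q<r$ with $a_n<r$ and $q<m$, knowing only the left disjunct you cannot establish $q<a_{n+1}$ (that needs the right disjunct or $q<a_n$) nor $a_{n+1}<r$ (the endpoint equals $m$ if the right disjunct also holds, and you only know $q<m$), so the locatedness proof for the folded cut breaks down, and the inductive invariant $f(a_{n+1})<\varepsilon/2$ gets stuck in the same way. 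Nor is there a uniqueness to exploit as in Theorem~\ref{thm:nat-dec}: when $-\varepsilon/2<f(m)<\varepsilon/2$, both continuations are legitimate and distinct.

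The paper avoids the dichotomy altogether, following Frank~\cite{frank:aivt}: with $z_0=a$, $w_0=b$, $c_n=(z_n+w_n)/2$ and the continuously interpolated weight $d_n=\max\bigl(0,\min\bigl(\tfrac12+\tfrac{f(c_n)}{\varepsilon},1\bigr)\bigr)$, it sets $z_{n+1}=c_n-d_n(b-a)/2^{n+1}$ and $w_{n+1}=w_n-d_n(b-a)/2^{n+1}$. The weight $d_n$ is an ordinary real-valued expression in $f(c_n)$, so the sequences are defined by plain recursion with no case analysis and hence no choice; $z,w$ are monotone with $w_n-z_n=(b-a)/2^n$, $x$ is the limit of $c$, and pointwise continuity at $x$ yields $\abs{f(x)}<\varepsilon$. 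The locator clause is then proved exactly as you argue, and using one construction for both clauses has the added benefit that the $x$ that receives a locator is the same $x$ found in the unstructured reading. If you want to keep a case-splitting subdivision, you must have the locators from the start --- that is your structured case, and it is what the introduction's trisection sketch for Theorem~\ref{thm:exact:ivt} does --- or replace the split by a continuous selection as above. (A minor further point: ``we may assume $a<b$'' is not constructively justified from $f(a)<0<f(b)$; at best you obtain $a\apart b$, a mere disjunction on which you cannot case split for an untruncated goal.)
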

\begin{proof}
  The first claim is shown as in Frank~\cite{frank:aivt} by defining
  sequences $c,d,z,w:\N\to\RD$:
  \begin{align*}
    z_0&=a&c_n&=(z_n+w_n)/2&z_{n+1}=c_n-d_n(b-a)/2^{n+1}\\
    w_0&=b&d_n&=\max\left(0,\min\left(\frac12+\frac{f(c_n)}{\varepsilon},1\right)\right)&w_{n+1}=w_n-d_n(b-a)/2^{n+1}
  \end{align*}
  with $x$ defined as the limit of $c:\N\to\RD$, which converges since
  $z,w:\N\to\RD$ are monotone sequences with $z_n\leq c_n\leq w_n$ and
  $z_n-w_n=(b-a)/2^{n}$.  Because $f$ lifts to locators, and $a$ and
  $b$ have a locator, all $c_n$ have locators.  For a modulus of
  Cauchy convergence, Lemma~\ref{thm:ops} gives a locator for $b-a$ so
  that we can use Lemma~\ref{lem:bounders} to compute a rational $B$
  with $\abs{z_n-w_n}\leq B/2^n$.  So by
  Lemma~\ref{lem:lim:locator}, $x$ has a locator.
\end{proof}

We will now work towards an intermediate value theorem in which the
locators help us with the computation of the root itself, avoiding any
choice principles.  We stated this intermediate value theorem and its
proof informally in the introduction to
Section~\ref{sec:locators}.
\begin{definition}
  A function $f:\RD\to\RD$ is \emph{locally nonconstant} if for all
  $x<y$ and $t:\RD$, there exists $z:\RD$ with $x<z<y$ and
  $f(z)\apart t$, recalling that
  $(f(z)\apart t)=(f(z)>t)\vee (f(z)<t)$.
\end{definition}
\begin{example}
  Every strictly monotone function is locally nonconstant, but not
  every locally nonconstant function is strictly monotone.
\end{example}
\begin{lemma}\label{lem:str:nonc}
  Suppose $f$ is a pointwise continuous function, and $x$, $y$ and $t$
  are real numbers with locators with $x<y$.  Further suppose that $f$ is locally
  nonconstant, and lifts to locators.
  Then we can find $r:\Q$ with $x<r<y$ and $f(r)\apart t$.
\end{lemma}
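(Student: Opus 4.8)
The plan is to turn the problem into a bounded search over a decidable predicate, in the style of Lemmas~\ref{lem:bounders} and~\ref{lem:archimedean:struct}: local nonconstancy only supplies the truncated \emph{existence} of a suitable real $z$, whereas \emph{finding} a rational is structure, so the rational must be extracted with Corollary~\ref{cor:enum:dec}. Throughout I assume $x<y$, which is needed for the conclusion and is how the lemma is applied. It is convenient to first replace $f$ by $g\coloneqq f-t$: since $f$ lifts locators and $t$ has a locator, $g$ lifts locators by Theorem~\ref{thm:ops}; $g$ is pointwise continuous because $f$ is; and $g$ is locally nonconstant because local nonconstancy of $f$ at $s+t$ produces $z$ with $g(z)=f(z)-t\apart s$. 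So it suffices to find $r:\Q$ with $x<r<y$ and $g(r)\apart 0$.

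The key point is that, although for a rational $r$ the propositions $x<r$, $r<y$ and $g(r)\apart0$ are individually undecidable, each is witnessed by decidable data read off the given locators. For rationals $q_1<r<q_2$, the propositions $\gU(q_1<_xr)$ and $\gL(r<_yq_2)$ are decidable (Lemma~\ref{lem:loc:dec}) and imply $x<r$ and $r<y$ respectively; conversely, if $x<r$ then the Archimedean property gives a rational $q_1$ with $x<q_1<r$, whence $\neg(q_1<x)$ and so $\gU(q_1<_xr)$ by Lemma~\ref{lem:g:dec}, and symmetrically for $r<y$. Since $r$ has a locator (Lemma~\ref{lem:rationals}) and $g$ lifts locators, $g(r)$ has a locator, so
\[
  D(r,q_3)\coloneqq\bigl((0<q_3)\land\gL(0<_{g(r)}q_3)\bigr)\lor\bigl((q_3<0)\land\gU(q_3<_{g(r)}0)\bigr)
\]
is decidable in $(r,q_3)$, implies $g(r)\apart0$, and — again by the Archimedean property and Lemma~\ref{lem:g:dec} — holds for a suitable rational $q_3$ whenever $g(r)\apart0$. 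Hence
\[
  P(r,q_1,q_2,q_3)\coloneqq(q_1<r)\land\gU(q_1<_xr)\land(r<q_2)\land\gL(r<_yq_2)\land D(r,q_3)
\]
(reading a $\gU/\gL$-clause as false when its rational side condition fails, so that $P$ stays decidable) is a decidable proposition on $\Q^4$ with $P(r,q_1,q_2,q_3)\Implies(x<r<y)\land(g(r)\apart0)$.

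It remains to prove $\ex{(r,q_1,q_2,q_3):\Q^4}P(r,q_1,q_2,q_3)$; being a proposition, I may untruncate freely. Local nonconstancy of $g$ at $0$ gives a real $z$ with $x<z<y$ and $g(z)\apart0$, and without loss of generality $g(z)>0$. Pick a rational $\varepsilon_0>0$ with $\varepsilon_0<g(z)$; pointwise continuity of $g$ at $z$ gives a rational $\delta_0>0$ with $\abs{g(z)-g(w)}<\varepsilon_0$, and hence $g(w)>0$, for every real $w$ with $\abs{z-w}<\delta_0$. The interval $\bigl(\max(x,z-\delta_0),\min(y,z+\delta_0)\bigr)$ contains $z$, so the Archimedean property yields a rational $r$ inside it; this $r$ satisfies $x<r<y$ and $\abs{z-r}<\delta_0$, so $g(r)>0$ and in particular $g(r)\apart0$. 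Feeding $x<r$, $r<y$ and $g(r)\apart0$ into the equivalences of the previous paragraph produces the required $q_1,q_2,q_3$. Now $P$ is decidable and $\ex{(r,q_1,q_2,q_3):\Q^4}P$ holds, so Corollary~\ref{cor:enum:dec}, applied along an equivalence $\N\eqv\Q^4$, yields an element of $\sm{(r,q_1,q_2,q_3):\Q^4}P(r,q_1,q_2,q_3)$; its first component is the desired rational $r$ with $x<r<y$ and $g(r)\apart0$, equivalently $f(r)\apart t$.

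The main obstacle is precisely this passage from mere existence to an explicit rational: one has to recognise that the undecidable conditions $x<r$, $r<y$ and $f(r)\apart t$ must be carried along the search as locator-checkable certificates $q_1,q_2,q_3$, and then verify both that the certified predicate $P$ is decidable and that the truncated existence of a certified tuple really does follow from local nonconstancy together with pointwise continuity — the latter being where one ``fattens'' $g(z)\apart0$ to $g(r)\apart0$ for a nearby rational.
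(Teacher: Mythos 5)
Your proof is correct and follows essentially the same route as the paper's: use local nonconstancy plus pointwise continuity to get the mere existence of a suitable rational, then exploit the locators (via Lemma~\ref{lem:g:dec} and Lemma~\ref{lem:loc:dec}) to phrase membership as a decidable certificate over a denumerable index set and extract an actual witness with Corollary~\ref{cor:enum:dec}. The only difference is bookkeeping: you carry explicit certificates $q_1,q_2,q_3$ for $x<r$, $r<y$ and $f(r)\apart t$ (via $g=f-t$), whereas the paper searches $\Q\times\Q_+$ for a pair $(r,\eta)$ with $\abs{f(r)-t}>\eta$ and leaves the $x<r<y$ side conditions implicit, so your version is if anything more complete on that point.
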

\begin{proof}
  Since $f$ is locally nonconstant, there \emph{exist} $z:\RD$ and
  $\varepsilon:\Q_+$ with $\abs{f(z)-t}>\varepsilon$.  Since $f$ is
  continuous at $z$, there exists $q:\Q$ with
  $\abs{f(q)-t}>\varepsilon/2$.  Since $\Q_+$ and $\Q$ are
  denumerable, we can find $r:\Q$ such that there exists
  $\eta:\Q_+$ with $\abs{f(r)-t}>\eta$.  In particular $r$ satisfies
  $\abs{f(r)-t}>0$, that is, $f(r)\apart t$.
\end{proof}
The above result can be thought of as saying that if $f$ is a
pointwise continuous function that lifts to locators, then the
\emph{property} of local nonconstancy implies a certain
\emph{structure} of local nonconstancy: for given reals with locators
$x<y$ and $t$, we do not just get the existence of a real $z$, but we
can explicitly choose a point $z$ where $f$ is apart from $t$.

Exact intermediate value theorems based on local nonconstancy usually
assume dependent choice, see e.g.\ Bridges and
Richman~\cite[Chapter~3, Theorem~2.5]{BridgesRichman:Varieties} or
Troelstra and van Dalen~\cite[Chapter~6,
Theorem~1.5]{Troelstra:vanDalen:1}.  The following result holds in the
absence of such choice principles.  It can perhaps be compared to
developments in which the real numbers are represented directly as
Cauchy
sequences~\cite{Schuster03constructivesolutions,schwichtenberg:witnesses,DBLP:journals/apal/Hendtlass12}
or with Taylor~\cite{taylor:lamcra}.  Note, however, that
\begin{enumerate}
\item we assume local nonconstancy rather than monotonicity, and that
\item we use the \emph{property} of local nonconstancy to compute
  roots, rather than assuming this as structure.
\end{enumerate}
\begin{theorem}\label{thm:exact:ivt}
  Suppose $f$ is a pointwise continuous function, and $a<b$ are real
  numbers with locators.  Further suppose that $f$ is locally
  nonconstant, and lifts to locators, with $f(a)\leq0\leq f(b)$.  Then we
  can find a root of $f$, which comes equipped with a locator.
\end{theorem}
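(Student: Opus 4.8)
The plan is to follow the informal proof sketch from the introduction to Section~\ref{sec:locators}: I would construct, by recursion on $\N$, two sequences $a,b\colon\N\to\RDL$ of reals \emph{equipped} with locators, maintaining the invariants $a_n<b_n$, $f(a_n)\le 0\le f(b_n)$, and $b_n-a_n\le(b-a)(2/3)^n$, and then produce the root as their common limit. Set $a_0\coloneqq a$ and $b_0\coloneqq b$ with the given locators. Given $a_n,b_n$ with locators and the invariants, the trisection points $c_n\coloneqq\frac{2a_n+b_n}{3}$ and $d_n\coloneqq\frac{a_n+2b_n}{3}$ are reals equipped with locators by Theorem~\ref{thm:ops} and Lemma~\ref{lem:rationals}, and $c_n<d_n$ since $a_n<b_n$. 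Because $f$ is pointwise continuous, locally nonconstant, and lifts locators, and $c_n$, $d_n$ and $0$ all carry locators, Lemma~\ref{lem:str:nonc} \emph{finds} a rational $q_n$ with $c_n<q_n<d_n$ and $f(q_n)\apart 0$; since $q_n\colon\Q$ has a locator (Lemma~\ref{lem:rationals}) and $f$ lifts locators, $f(q_n)$ also has a locator.

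The heart of the step is to \emph{decide}, from the bare apartness $f(q_n)\apart 0$, whether $f(q_n)>0$ or $f(q_n)<0$. I would observe that each disjunct of $f(q_n)\apart 0$ implies the strict inequality $\min(f(q_n),0)<\max(f(q_n),0)$, which is a proposition, so $\lor$-elimination produces a proof of it; moreover $\min(f(q_n),0)$ and $\max(f(q_n),0)$ carry locators by Theorem~\ref{thm:ops}. Corollary~\ref{cor:cotrans:rat}, applied with the rational $s\coloneqq 0$, then gives an honest choice of either $\min(f(q_n),0)<0$ (equivalently $f(q_n)<0$) or $0<\max(f(q_n),0)$ (equivalently $f(q_n)>0$). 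In the first case I recurse with $(a_{n+1},b_{n+1})\coloneqq(q_n,b_n)$, in the second with $(a_{n+1},b_{n+1})\coloneqq(a_n,q_n)$. I keep one endpoint fixed rather than replacing it by a trisection point as in the sketch, since I do not know the sign of $f$ at $c_n$ or $d_n$; strict nesting is not actually needed. In either case the new endpoints lie among $\{a_n,b_n,q_n\}$, hence again have locators; the sign condition holds because $f(q_n)<0$ (resp.\ $>0$); and $b_n-q_n<b_n-c_n=\frac{2}{3}(b_n-a_n)$ (resp.\ $q_n-a_n<d_n-a_n=\frac{2}{3}(b_n-a_n)$), so the shrinkage invariant is kept. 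Since every choice made is a non-truncated construction, this legitimately defines $a,b\colon\N\to\RDL$, with $a$ non-decreasing and $b$ non-increasing.

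For the limit, Theorem~\ref{thm:ops} gives a locator for $b-a$, and then Lemma~\ref{lem:bounders} a rational $B$ with $b-a\le B$; combined with the shrinkage invariant this yields (in fact computes) a modulus of Cauchy convergence for both $a$ and $b$, which, since $b_n-a_n\to 0$, share a common limit $x\colon\RD$ with $a_n\le x\le b_n$ for every $n$. As each $a_n$ comes with a locator, Lemma~\ref{lem:lim:locator} equips $x$ with a locator. Finally $f(x)=0$: given $\varepsilon\colon\Q_+$, pointwise continuity of $f$ at $x$ yields $\delta\colon\Q_+$ with $\abs{f(x)-f(y)}<\varepsilon$ whenever $\abs{x-y}<\delta$; picking $n$ with $b_n-a_n<\delta$ gives $\abs{x-a_n}<\delta$ and $\abs{x-b_n}<\delta$, so $f(x)<f(a_n)+\varepsilon\le\varepsilon$ and $f(x)>f(b_n)-\varepsilon\ge-\varepsilon$, whence $\abs{f(x)}\le\varepsilon$; as $\varepsilon$ was arbitrary, $f(x)=0$. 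The triple consisting of $x$, this proof, and the locator is the required root equipped with a locator.

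The main obstacle is the sign-decision step: the hypotheses supply only $f(q_n)\apart 0$, a \emph{truncated} disjunction, while the recursion has to branch on an honest bit, and turning the former into the latter \emph{without any choice principle} is precisely what the locator on $f(q_n)$ makes possible, through Corollary~\ref{cor:cotrans:rat}. Everything else is routine bookkeeping; the only points needing a little care are that the recursive invariants are genuinely propositional where the recursion relies on them, and that $\abs{x-a_n}$ and $\abs{x-b_n}$ tend to $0$ so that the continuity estimate at $x$ closes.
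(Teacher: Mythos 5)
Your proof follows the paper's own trisection strategy, and where it deviates it does so to good effect. Two points are worth recording. First, your sign-decision step --- deriving the proposition $\min(f(q_n),0)<\max(f(q_n),0)$ from the truncated apartness and then applying Corollary~\ref{cor:cotrans:rat} with $s=0$, using the locators on $\min(f(q_n),0)$ and $\max(f(q_n),0)$ supplied by Theorem~\ref{thm:ops} --- is a correct and explicit way to turn $f(q_n)\apart 0$ into an untruncated branch; the paper's proof simply case-splits on $f(q_n)>0$ versus $f(q_n)<0$ without comment, so you have filled in a step it leaves implicit, and this is precisely the point where the locators (rather than mere locatedness) are doing the work. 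Second, your update rule differs from the paper's: when $f(q_n)>0$ the paper sets $a_{n+1}\coloneqq\frac{2a_n+b_n}{3}$ and $b_{n+1}\coloneqq q_n$ (and symmetrically in the other case), i.e.\ it replaces the \emph{opposite} endpoint by a trisection point to obtain strict nesting, whereas you keep that endpoint fixed. Your choice is the right one: since the sign of $f$ at the trisection point is unknown, the paper's literal rule does not maintain the stated invariant $f(a_n)\leq 0\leq f(b_n)$ --- for $f(x)=x-\tfrac18$ on $[0,1]$ the very first step already yields an interval on which $f$ is strictly positive, so the limit of that recursion is not a root --- hence the recursion must be read as keeping one endpoint, exactly as you do; strict nesting is not needed, and the $\left(\tfrac23\right)^n$ shrinkage survives since $q_n$ lies in the middle third. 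The remainder of your argument --- locators for $q_n$ and $f(q_n)$, the modulus of Cauchy convergence from a rational bound $B\geq b-a$ via Theorem~\ref{thm:ops} and Lemma~\ref{lem:bounders}, the locator for the limit via Lemma~\ref{lem:lim:locator}, and the pointwise-continuity argument giving $\abs{f(x)}\leq\varepsilon$ for every $\varepsilon:\Q_+$ --- matches the paper's proof.
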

\begin{proof}
  We define sequences $a,b:\N\to\RD$ with $a_n<a_{n+1}<b_{n+1}<b_n$,
  with $f(a_n)\leq0\leq f(b_n)$, with
  $b_n-a_n\leq(b-a)\left(\frac{2}{3}\right)^n$, and such that all
  $a_n$ and $b_n$ have locators.  Set $a_0=a$, $b_0=b$.  Suppose $a_n$
  and $b_n$ are defined, and use Lemma~\ref{lem:str:nonc} to find
  $q_n$ with $\frac{2a_n+b_n}{3}<q_n<\frac{a_n+2b_n}{3}$ and
  $f(q_n)\apart 0$.
  \begin{itemize}
  \item If $f(q_n)>0$, then set $a_{n+1}\coloneqq a_n$
    and $b_{n+1}\coloneqq q_n$.
  \item If $f(q_n)<0$, then set $a_{n+1}\coloneqq q_n$ and
    $b_{n+1}\coloneqq b_n$.
  \end{itemize}
  For a modulus of Cauchy convergence, we can compute a locator for
  $b-a$ and from this we can compute a rational $B$ with
  $\abs{b_n-a_n}\leq B\left(\frac{2}{3}\right)^n$.  The sequences
  converge to a number $x$.  For any $\varepsilon$, we have
  $\abs{f(x)}\leq\varepsilon$, hence $f(x)=0$.
\end{proof}

\begin{remark}
  Since we only appealed to Lemma~\ref{lem:str:nonc} with $t=0$, that
  is, since we were only interested in points where $f$ is apart from
  0, Theorem~\ref{thm:exact:ivt} may be strengthened by only requiring
  that $f$ is locally nonzero.
\end{remark}

Theorem~\ref{thm:exact:ivt} is an improvement on existing exact
intermediate value
theorems~\cite{Schuster03constructivesolutions,taylor:lamcra} since it
assumes the \emph{property} of local nonconstancy to compute roots.

\begin{example}
  The function $\exp$ is strictly increasing, and hence locally
  nonconstant.  So if $y>0$ has a locator, then $\exp(x)=y$ has a
  solution $x$ with a locator.
\end{example}

\section{Closing remarks}
\label{sec:conclusions}
We have paid attention to the difference between property and
structure while defining the real numbers and other foundations of
constructive analysis.  We have introduced the term \emph{locator} to
mean the structure that is the focus of this paper, and have
introduced a basic theory of locators.  The fact that the results
about locators have equivalents in terms of intensional
representations of reals suggests that we are not doing anything new.
This is desirable: we merely introduced a particular representation
that seems suitable for computation.  The presence of the locators is
not to make the constructive analysis work; rather, it is to make the
computation work.  In this sense, we have made the computation work
without a conceptual burden of intensional representations.

The constructions and results remind of computable analysis.  But our
development is orthogonal to computability: even reals that are not
computable in some semantics can have locators, for example in the
presence of choice axioms, in which case all reals have locators.

Locators allow to observe information of real numbers, such as
signed-digit expansions.  We have shown the interdefinability of
locators with Cauchy sequences, and in this way we characterized the
Cauchy reals as those Dedekind reals for which a locator exists.

The new notion of \emph{lifting to locators} grew out of a naive desire
to have locators for the output of a function whenever we have a
locator for the input.  We have left the following open question:
given that $f:\RD\to\RD$ lifts to locators, do we obtain a certain
\emph{structure} of continuity from a \emph{property} of continuity?

We have not spent much time finding an alternative notion of
``functions that compute'' with a closer relationship to continuity,
and this could be the topic of further research.  Such a notion could
perhaps allow for more satisfying formulations of the theorems in
Sections~\ref{sec:integrals} and~\ref{sec:ivt}.

Our work allows to obtain signed-digit representations of integrals.
These results are based on backwards error propagation, essentially
due to our notion of lifting to locators.  The advantage of this is that
we are guaranteed to be able to find results.  However, forward error
propagation, as in Mahboubi et al.~\cite{mahboubi:integrals}, may be
more efficient.  It may be possible to combine the naturalness of
locators with forward error propagation by equipping the real numbers
involved with bounds as in the remark below Lemma~\ref{lem:bounders}.
Having shown that we can compute arbitrarily precise approximations to
reals with locators in Lemma~\ref{lem:precise}, we may as well equip
real numbers with an efficient method for doing so.  Thus, in future
work, some of the techniques of previous work on verified computation
with exact reals may be developed in our setting as well.

Another possible future direction is to find a more general notion of
locator that applies to more general spaces, such as the complex
plane, function spaces, or metric spaces.  This could then be a
framework for observing information of differential equations, which
are also discussed in a more general description of
locators~\cite{booij:thesis}.

The work lends itself to being formalized in proof assistants such as
Agda or Coq.  In this way we can automatically obtain algorithms from
proofs.  Part of the work has indeed been formalized in
Coq~\cite{booij:coq:locators}.  Results in the work above correspond
with the formalized proofs in \verb|theories/Analysis/Locator.v| as
follows: Lemma~\ref{lem:locator:pem} as \verb|all_reals_locators|,
Lemma~\ref{lem:rationals} as \verb|locator_left| and
\verb|locator_right|, Lemma~\ref{lem:loc:dec} as
\verb|equiv_locator_locator'|, Lemma~\ref{lem:g:dec} as
\verb|nltqx_locates_left| and \verb|nltxr_locates_right|,
Lemma~\ref{lem:bounders} as \verb|lower_bound| and \verb|upper_bound|,
Lemma~\ref{lem:precise} as \verb|tight_bound|,
Lemma~\ref{lem:archimedean:struct} as \verb|archimedean_structure|,
and the majority of Theorem~\ref{thm:ops}, as well as
Lemma~\ref{lem:lim:locator}, as the terms starting with
\verb|locator_|.  This development has been merged
into the HoTT library~\cite{DBLP:conf/cpp/BauerGLSSS17}.
But we may worry that the
proofs we provided are not sufficiently efficient for useful
calculations, and we intend to address this important issue in future
work.

\subsection*{Acknowledgements}
\label{sec:acknowledgements}

We would like to thank the reviewers for their invaluable feedback.
This project has received funding from the European Union's Horizon
2020 research and innovation programme under the Marie
Skłodowska-Curie grant agreement No 731143.

\bibliography{analysis-utt}
\clearpage
\appendix

\end{document}